\numberwithin{equation}{section}
\theoremstyle{plain}
\newtheorem{Thm}{Theorem}[section]
\newtheorem{Lem}[Thm]{Lemma}
\newtheorem{Cor}[Thm]{Corollary}
\newtheorem{Prop}[Thm]{Proposition}
\theoremstyle{definition}
\newtheorem{Def}[Thm]{Definition}
\newtheorem{Rem}[Thm]{Remark}
\newtheorem{?}[Thm]{Problem}
\newcommand{\dv}{\mathrm{div}}
\newcommand{\bpartial}{\bar{\partial}}
\newcommand{\vv}{\mathbf{v}}
\begin{document}

\title[Local theory of Free boundary problem of the FNS in 3D]{Local existence and uniqueness of strong solutions to the free boundary problem of the full compressible Navier-Stokes equations in 3D}

\author[X. Liu]{Xin LIU}
\address[X. Liu]{Department of Mathematics, Texas A\&M University, College Station, TX, United States}
\email{xliu@math.tamu.edu}

\author[Y. Yuan]{Yuan YUAN}
\address[Y. Yuan]{The Institute of Mathematical Sciences \& Department of Mathematics, The Chinese University of Hong Kong,	Shatin, N.T., Hong Kong}
\email{yyuan@math.cuhk.edu.hk}

\date{\today}

\begin{abstract}
In this paper we establish the local-in-time existence and uniqueness of strong solutions to the free boundary problem of the full compressible Navier-Stokes equations in three-dimensional space.
The vanishing density and temperature condition is imposed on the free boundary, which captures the motions of the non-isentropic viscous gas surrounded by vacuum with bounded entropy.
We also assume some proper decay rates of the density towards the boundary and singularities of derivatives of the temperature across the boundary on the initial data, which coincides with the physical vacuum condition for the isentropic flows.
This extends the previous result of Liu [ArXiv:1612.07936] by removing the spherically symmetric assumption and considering more general initial density and temperature profiles.
\end{abstract}

\maketitle


\section{Introduction}

In this work, we aim to study the motions of the non-isentropic viscous gas surrounded by the vacuum,
which is modeled by the free boundary problem of the full compressible Navier-Stokes equations in $\mathbb{R}^3$:
\begin{equation}\label{FNS}
\left\{
\begin{aligned}
& \rho_t+\dv\, (\rho u)=0 && \text{in}\ \Omega(t), \\
& (\rho u)_t +\dv \, (\rho u \otimes u)+\nabla p= \dv \, \mathbb{S} && \text{in}\ \Omega(t),\\
&\partial_t (\rho E)+\dv\, (\rho u E+p u )=\dv\, (u \mathbb{S})+\kappa\Delta\theta && \text{in}\ \Omega(t)\\
&\rho>0,~\theta>0&& \text{in}\ \Omega(t),\\
& \rho=0,\ \theta=0,\ (\mathbb{S}-p\mathbb{I}_3) n=0,  && \text{on} \ \Gamma(t),\\
& \mathcal{V}(\Gamma (t))=u\cdot n && \text{on} \ \Gamma(t):=\partial \Omega(t),\\
&(\rho,u,\theta)=(\rho_0,u_0,\theta_0) && \text{on} \ \Omega:=\Omega(0),
\end{aligned}
\right.
\end{equation}
where $\rho$, $u$, $p$, $\mathbb{S}$  and $\theta$ represent the scaler density, the velocity field, the pressure potential, the viscous tensor and the absolute temperature respectively. In particular, $ \mathbb S - p\mathbb I_3 $ is the total stress tensor. 
$E=\frac{1}{2}|\vv|^2+e$ is the specific energy with $e $ being the specific internal energy.
The constant $\kappa>0$ is the coefficient of heat conductivity.
$\Omega(t) := \lbrace x\in\mathbb{R}^3; \rho(x,t) > 0 \rbrace $ is the evolving, occupied domain determined by the kinetic boundary condition $ \eqref{FNS}_{6} $, where $\Gamma(t)$ represents the gas-vacuum interface  and $\mathcal{V}(\Gamma(t))$ is the normal velocity of the evolving interface. 
Here $n$ is the exterior unit normal vector on $ \Gamma(t) $.

We assume that the fluid is Newtonian and the viscous stress tensor $\mathbb{S}$ is taken as 
$$
\mathbb{S}=\mu (\nabla u+(\nabla u)^\intercal)+\lambda \ \dv \, u \ \mathbb{I}_3 
$$
with the Lam\'{e} viscosity coefficients $\mu, \lambda$ being constants  which satisfy $\mu>0 $, $ 2\mu+3\lambda > 0 $. 
Also, we assume that the gas is polytropic, and the pressure potential $p$, the specific inner energy $ e $ are given by the equations of state,
\begin{equation}
\label{state1}
p=R\rho\theta = \bar{A}e^{S(\gamma-1)/R}\rho^{\gamma}, \quad e=c_\nu \theta = \dfrac{R}{\gamma-1}\theta,
\end{equation}
where $S$ is the entropy, and $\gamma>1$, $R>0$, $ c_{\nu}>0$ are referred to as the adiabatic exponent, the universal gas constant and the specific heat coefficient respectively. 
Here $ \bar A $ is any positive constant. 
In particular, $ p, e, \rho $ satisfy the Gibbs-Helmholtz equation (\cite{Feireisl2009}).

The goal of this work is to establish the local existence and uniqueness theorems for the system \eqref{FNS}. In particular, we want to show the existence of classical solutions with vanishing density on the free boundary and bounded entropy $S$ in the space-time. 
This yields the compatible boundary condition $\theta = 0 $ on $ \Gamma(t) $ by \eqref{state1}.
In fact, we show that if the entropy is initially bounded, then for a short time it remains bounded (see Remark \ref{rem_entropy}). 
This is of great interest in the following sense. For the Cauchy problem, the author in \cite{Xin1998} has shown that if the initial density is supported in a compact domain, the classical solution with bounded entropy blows up in finite time if $ \kappa = 0 $. See also \cite{Xin2013}. Therefore, it is worth considering the free boundary problem of the compressible Navier-Stokes equations.
That is the system \eqref{FNS}. 
On the other hand, the radiation gaseous star can be modeled by such hydrodynamic equations. In fact, on the boundary of a radiation gaseous star, the temperature vanishes ($ \theta|_{\Gamma(t)} = 0 $). This indicates the fact that on the surface of a star, radiation transfers the heat to the space in the form of light (through emission of photons), and thus the temperature of the gas is relatively very low. See \cite{Chandrasekhar1957} and \cite{Liu2016a}. 

The compressible Navier-Stokes equations have been studied widely among the literatures. To name a few, let us start with the classical result concerning the Cauchy and first initial boundary value problems. In the absence of vacuum ($ \rho \geq \underline \rho > 0 $), the local and global well-posedness of classical solutions have been investigated for a long time. For instance, Serrin \cite{Serrin1959} considered the uniqueness of both viscous and inviscid compressible flows. Itaya \cite{Itaya1971} and Tani \cite{Tani1977} considered the local well-posedness and uniqueness for the Cauchy and first initial boundary problems. On the other hand, Matsumura and Nishida \cite{Matsumura1980,Matsumura1983} established the first results concerning the global well-posedness of classical solutions to the Navier-Stokes equations. Such global solutions are evolving near a uniform non-vacuum state. However, when there exist vacuum areas of the solutions, the solutions to compressible Navier-Stokes equations may behavior completely differently. For example, the aforementioned works by Xin and Yan \cite{Xin1998,Xin2013} point out that the appearance of vacuum areas of the initial density profile will cause finite time blow-up of the classical solutions. See also \cite{Jiu2015,Cho2006}. Recently, Li, Wang and Xin \cite{Li2017b} show that there are no classical solutions with bounded entropy for the compressible Navier-Stokes equations in some Sobolev spaces in any time interval $ (0,T) $ if the initial data contains vacuum. On the other hand, a local well-posedness theory was developed by Cho and Kim \cite{Cho2006b,Cho2006a} for barotropic and heat-conductive flows. See also \cite{Zhang2007a}. However, the solutions obtained by Cho and Kim are not in the same functional space as those in \cite{Xin1998}. In particular, they can not track the entropy in the vacuum area. With a small initial energy, Huang, Li, Xin \cite{HuangLiXin2012} showed the global existence of classical solutions but with large oscillations to the isentropic compressible Navier-Stokes equations. With the density-dependent but non-degenerated viscosities,  Va\v{i}gant and Kazhikhov in \cite{Vaygant1995} first developed the global existence of strong solutions to the isentropic Navier-Stokes equations in two-dimensional space when the initial density is away from the vacuum. Recently, by Jiu, Wang and Xin in \cite{Jiu2014}, such a result is developed in the case when there exists vacuum area of the solutions . Latter, Huang and Li in \cite{Huang2016} refines the results to a wider range of the adiabatic index with arbitrary large initial data. Unfortunately, as pointed out by Liu, Yang, Xin in \cite{Liu1998a}, such solutions may not lead to a physically satisfactory one when there exists a vacuum area. Therefore, there are basically two strategies to resolve such problems caused by the vacuum. One is to study the Navier-Stokes system with degenerated viscosities which vanishes on the vacuum area. The other is to study the problem in a free boundary setting. In fact, the authors introduced the problem with degenerated viscosities and showed the well-posedness of the free boundary problem locally in time.

As for the free boundary problem, the local well-posedness theory can be tracked back to Solonnikov and Tani \cite{Solonnikov1992}, Zadrzy\'nska and Zaj\c{a}czkowski \cite{Zadrzynska2001,Zadrzynska1994,Zajaczkowski1993,Zajaczkowski1995}. The global well-posedness theory was studied in \cite{Zajaczkowski1999,Zajaczkowski1993,Zajaczkowski1994}. Among these works, the total stress tensor on the moving surface is balanced by a force induced by the surface tension or an external pressure. Thus there exists a uniformly non-vacuum equilibrium state. The global solutions are obtained as a perturbation of such a state. See \cite{Zhang2007} for the spherically symmetric motions when a external force is present. When the external force is given by the self-gravitation, Zhang and Fang in \cite{Zhang2009} studied the existence of global weak solutions and the asymptotic convergence to the equilibrium with small initial perturbations. The stationary equilibrium is given by the gaseous star equation in \cite{Auchmuty1971,Chandrasekhar1957,Lieb1987}. 
Recently, 
the nonlinear asymptotic stability of the stationary equilibrium is established by Luo, Xin, Zeng in \cite{Luo2016a,Luo2016b}, where the coordinate singularity at the center of the spherical coordinates are resolved. Let us emphasize that the equilibrium states among these works are important in the sense that they give a reference domain for the change of coordinates and the stability of the equilibrium gives the control of the Jacobian. 

Most of the works in this direction focus on the spherically symmetric motions. When the density connects to vacuum with a jump, a global weak solution to the problem with density-dependent viscosities and spherically symmetric motions was obtained in \cite{Guo2012a} by Guo, Li, Xin. It was shown that the solution is smooth away from the symmetric center. Recently, such a problem with different conditions on the viscosity tensor is studied in two-dimensional space by Li and Zhang \cite{Li2016}, where the authors established the existence of the global strong solution. Another noticeable result from Yeung and Yuen in \cite{Yeung2009} constructed some analytic solutions in the case of density-dependent viscosities. This was further studied in \cite{Guo2012b}. Such solutions indicate that the domain of the gas (fluid) will expand as time grows up, and the density will decrease to zero everywhere including the centre. When the density connects to vacuum continuously, Luo, Xin, Yang constructed a class of global solutions in one-dimensional space in \cite{Luo2000}. This is further generalized by Zeng in \cite{Zeng2015}. Recently by Liu \cite{Liu2018}, the existence of global solutions with small initial energy in the spherical motions is established with the density connected to vacuum continuously or discontinuously. 

Another problem in the gas-vacuum interface problem arises in the case  when the density connects to vacuum satisfying the physical vacuum condition for the isentropic flows. That is, the sound speed is only $1/2$-H\"older continuous across the interface. 
This singularity can be found in the self-similar solutions to compressible Euler equations with damping (\cite{Liu1996}), or the stationary solutions to the gaseous star equation (\cite{Chandrasekhar1957, Lin1997}). 
As it is pointed out by Liu in \cite{Liu1996}, such a singularity of the density makes the standard hyperbolic method fail in establishing the local well-posedness theory for the inviscid flow. Only recently, the local well-posedness theory for inviscid flows is developed by Coutand, Lindblad and Shkoller \cite{Coutand2010,Coutand2011,Coutand2012}, Jang and Masmoudi \cite{Jang2009,Jang2015}, Gu and Lei \cite{Gu2012,Gu2015}, Luo, Xin and Zeng \cite{Luo2014} in variant settings. 
See \cite{Jang2010, Duan2015a} for the viscous case.
The nonlinear stabilities of some special solutions in variant settings with the physical vacuum condition can be found in \cite{ Hadzic2016a,Hadzic2016,Liu2017a, Liu2018,Luo2016b, Luo2016a, Luo2016,Shkoller2017,Zeng2015,Zeng2017}. However, all the above works are concerned on the isentropic case and seldom results in the non-isentropic case have been obtained. In \cite{Li2018}, Li proved the global well-posedness of strong solutions to the full Navier-Stokes equations in 1D case. Liu established the local well-posedness theory for the heat conductive flows with self-gravitation in \cite{Liu2016a}, and the nonlinear stability of a special class of expanding solutions in \cite{Liu2018}. 

Our purpose of this work is to study the gas-vacuum interface problem for heat conductive flows in the multi-dimensional space without imposing the symmetry property. The result we achieve in this work extends the one in \cite{Liu2016a} into general initial density and temperature profiles. In particular, the density and temperature can live in the same functional space as those in \cite{Liu2016a}, which contains the equilibrium state of the radiation flows with self-gravitation. 
The entropy of solutions is bounded in short time provided the initial entropy is bounded. Inspired by the above physical vacuum condition, we also impose proper decay rates of the density towards the boundary and singularities of derivatives of the temperature across the boundary on the initial data, which coincide with the physical vacuum condition for the isentropic fluids in certain sense (see Remark \ref{physical vacuum}). 

To solve the free boundary problem of \eqref{FNS}, we follow the method of Coutand and Shkoller in \cite{Coutand2012}. By transforming \eqref{FNS} in Lagrangian coordinates, it turns out that the equation \eqref{1} are two parabolic equations of the velocity $v$ and the temperature $\Theta$ on a fixed domain. The density $\rho_0$ appears as coefficients, and the coefficients of the time derivatives degenerate on the boundary. As standard energy estimates of parabolic equations, we use dissipation terms to control the others. However, the Poincar\'{e} inequality is not applicable to estimate $v$ since it does not vanish on the boundary. Instead, here the Hardy's inequality and the assumption on the decay rate of the density profile towards the boundary are used. We use the singularities of the temperature derivatives crossing the boundary to maintain the positivity of $\Theta$ inside the region and the boundedness of the entropy.
The ellipticity of the coefficients of the dissipation terms and the boundedness of the Jacobian of the flow map are controlled by the norms of velocity and small enough time. Having obtained the a priori estimate of \eqref{1}, the existence and uniqueness of strong solutions is achieved by Banach fixed point theorem.

This work is organized as follows: in Section 2 we formulate \eqref{FNS} in the Lagrangian coordinates and state the result; in Section 3 we prepare the inequalities which will be frequently used; the a priori estimates and the fixed point argument to prove the existence are presented in Section 4 and Section 5 respectively.


\vspace{0.7cm}
\section{Results}

\subsection{Lagrangian formation}~

Denote the Lagrangian and Eulerian coordinates as $x$ and $y$ respectively.
Also let the flow map $y=\eta(x)$ be defined by the following ODE:
\begin{equation}\label{def-eta}
\left\{
\begin{aligned}
&\eta_t=u\circ \eta\\
&\eta(0)=\text{Id}~
\end{aligned}
\right. ,
\end{equation}
where Id is the identity map.
Meanwhile, the associated deformation matrix and Jacobian are denoted as 
$$
A=[D\eta]^{-1}, 
J=\det (D\eta), 
a=JA,
(A^i_j=\frac{\partial x_i}{\partial y_j}=J^{-1}a^i_j)
$$
Then the Lagrangian unknowns are defined by the following change of variables
$$
f=\rho \circ \eta,
v=u\circ \eta,
\Theta=\theta \circ \eta.
$$

We will use $F_{,k}$ to represent $\frac{\partial F}{\partial x_k}$.
Throughout this paper, $\delta^i_j$, $\delta^{ij}$ and $\delta_{ij}$ all present the Kronecker symbols;
the Einstein summation convection will be employed and we will also omit the summation symbols of dot products of vectors or double dot products of matrices for convenience.
Then in the Lagrangian coordinates, the system \eqref{FNS} is in the following form
\begin{equation}
\label{0}
\left\{
\begin{aligned}
& f_t+f \dv_{\eta} v=0 && \text{in}\ \Omega\times(0,T],\\
& fv^i_t+(\nabla_{\eta} (Rf\Theta))^i = (\dv_{\eta} \mathbb{S}_{\eta}[v])^i && \text{in}\ \Omega\times(0,T],\\
& c_v f\Theta_t +Rf\Theta \ \dv_{\eta} v= \mathbb{S}_{\eta}[v]:\nabla_{\eta}v+\kappa \Delta_{\eta}\Theta && \text{in}\ \Omega\times(0,T],\\
&f>0,~ \Theta>0 && \text{in}\ \Omega\times(0,T],\\
&f=0,\ \Theta=0,\ (\mathbb{S}_{\eta}[v]-Rf\Theta) (n\circ \eta)=0 && \text{on} \ \Gamma\times(0,T]:=\partial\Omega\times (0,T],\\
&(f,v,\Theta)=(\rho_0,u_0,\theta_0) && \text{on}\ \Omega\times\{t=0\},
\end{aligned}
\right.
\end{equation}
where
for a scalar field $F$ and a vector field $W$,
$$
(\nabla_{\eta}F)^i=A_i^k F_{,k},~
\dv_{\eta} W=A_l^k W^l_{,k}\ ,~
\Delta_{\eta}F=\dv_{\eta}\nabla_{\eta} F,
$$
and the viscosity tensor in the Lagrangian coordinates is represented by
\begin{equation}\label{S}
\mathbb{S}^{ij}_{\eta}[W]=\mu (A^{k}_j W^i_{,k}+A^k_i W^j_{,k})+\lambda (A_l^k W^l_{,k} )\delta^i_j.
\end{equation}

It follows from $\eqref{0}_1$ and \eqref{deri_Ja} that
\begin{equation*}
(fJ)_t=0.
\end{equation*}
Therefore, if initially $\rho_0>0$ in $\Omega$ and $\rho_0=0$ on $\Gamma$, then so is $f$ for a regular solution.
So the full Navier-Stokes equations in the Lagrangian coordinates can be written as
\begin{equation}\label{1}
\left\{
\begin{aligned}
& \rho_0 v^i_t+a^r_i (\frac{R\rho_0 \Theta}{J})_{,r} = a^r_j \mathbb{S}^{ij}_{\eta}[v]_{,r} &&\text{in}\ \Omega\times(0,T],\\
& c_v \rho_0\Theta_t +\frac{R\rho_0\Theta}{J}a^r_iv^i_{,r}=  \mathbb{S}^{ij}_{\eta}[v]a^r_jv^i_{,r}+\kappa a^r_i(\nabla_{\eta}\Theta)^i_{,r} &&\text{in}\ \Omega\times(0,T],\\
& \Theta>0 &&\text{in}\ \Omega\times(0,T],\\
& \Theta=0,\  (\mathbb{S}_{\eta}^{ij}[v]) (n\circ \eta) =0 &&\text{on}\ \Gamma\times(0,T],\\
& (v,\Theta)=(u_0,\theta_0) && \text{on}\ \Omega\times\{t=0 \}.
\end{aligned}
\right.
\end{equation}
(Notice that here we omit the summation symbol $\sum\limits_{i,j=1,2,3}$ of the double dot product of matrices $\mathbb{S}^{ij}_{\eta}[v]a^r_jv^i_{,r}$ for convenience.)
\vspace{0.3cm}

\subsection{The reference domain}~

For the sake of simplicity, we follow the assumption of \cite{Coutand2012, Jang2015} on the initial domain.
That is $\Omega=\mathbb{T}^2\times(0,1)$ and $\Gamma=\{x=(x_1,x_2,x_3)|~x_3=0,1\}$.
Here $\mathbb{T}^2=(0,1)^2$ is the periodic unit box in $\mathbb{R}^2$, and $x_3=0,1$ are the coordinates of the free boundaries in the Lagrangian coordinates.
Then 
\begin{equation*}
(n\circ \eta)^i=\begin{cases}
\left(\frac{\eta_{,1}\times \eta_{,2}}{|\eta_{,1}\times \eta_{,2}|}\right)^i=\frac{a^3_i}{\sqrt{(a^3_1)^2+(a^3_2)^2+(a^3_3)^2}} & x_3=1\\
-\left(\frac{\eta_{,1}\times \eta_{,2}}{|\eta_{,1}\times \eta_{,2}|}\right)^i=-\frac{a^3_i}{\sqrt{(a^3_1)^2+(a^3_2)^2+(a^3_3)^2}} & x_3=0
\end{cases},
\end{equation*}
and the boundary condition $(\mathbb{S}_{\eta}^{ij}[v]) (n\circ \eta) =0$ on $\Gamma\times(0,T]$ is equivalent to 
$$
a^3_j \mathbb{S}_{\eta}^{ij}[v]=0 ~\text{on}~\Gamma\times(0,T].
$$

We will use $D,\ \bpartial,\ \partial_3$ to represent
$$
D \in \left\{\frac{\partial}{\partial x_1}, \frac{\partial}{\partial x_2}, \frac{\partial}{\partial x_3} \right\},~
\bpartial\in\left\{\frac{\partial}{\partial x_1}, \frac{\partial}{\partial x_2}\right\},~\partial_3=\frac{\partial}{\partial x_3}.
$$ 
and use the simplified notations for tensors 
\begin{align*}
&|D\rho_0|=\sum\limits_{i=1,2,3}|\rho_{0,i}|,~
|\bpartial\rho_0|=\sum\limits_{i=1,2}|\rho_{0,i}|,~|Dv|=\sum\limits_{i,j=1,2,3}|v^i_{,j}|,~\\
&|\bpartial v|=\sum\limits_{\substack{i=1,2,3,\\ j=1,2}}|v^i_{,j}|,~
|a|=\sum\limits_{i,j=1,2,3}|a^i_{,j}|,~|\delta|=\sum\limits_{i,j=1,2,3}|\delta^i_{j}|.
\end{align*}
Moreover, we use the notation $H^k$ as the usual Sobolev space $H^k(\Omega)$ (or $H^k(\Omega; \mathbb{R}^3)$), and $H^1_0$ as
$$
H^1_0(\Omega)=\{u\in H^1;~ u=0~\text{on}~\Gamma\}.
$$

\subsection{Main theorem}~

The initial density and temperature profiles are assumed to satisfy the following conditions:
\begin{equation}\label{rho1}
	\rho_0(x)>0 \ \text{in}\ \Omega \ \text{and}\ \rho_0=0 \ \text{on}\ \Gamma,
\end{equation}
\begin{equation}
	\label{rho2}
	\rho_0=\mathcal{O} (d^{\alpha}) \qquad \text{as}\ d \rightarrow 0^+ \ \text{for some}\ \alpha>0,
\end{equation} 
	where $d(x)=x_3(1-x_3)$ represents the distance from the point $x$ to the boundary $\Gamma$. Also
 \begin{equation}
	\label{rho3}
	\|\rho_0\|_{L^{\infty}}+\|D \rho_0\|_{L^3}+\|\bpartial \rho_0\|_{H^1}
	+\|d \Big(| D^2 \rho_0 |+|\bpartial D^2 \rho_0|\Big) \|_{L^2}\leq C,
\end{equation}
\begin{equation}
\label{theta1}
\theta_0(x)>0 \ \text{in}\ \Omega \ \text{and}\ \theta_0=0 \ \text{on}\ \Gamma,
\end{equation}
\begin{equation}\label{theta2}
-\infty<\nabla_{\mathbf{n}}(\theta_0)<0, \quad \text{on}\; \Gamma.
\end{equation}

On the other hand, the energy functional in this work is defined by
\begin{equation}\label{E}
\begin{aligned}
E(v,\Theta)(t)=&\|\rho_0^{1/2}v_{tt}(\cdot,t)\|_{L^2}^2+\|v_t(\cdot,t)\|_{H^1}^2+\|v(\cdot,t)\|_{H^3}^2\\
&+\|\rho_0^{1/2}\Theta_{tt}(\cdot,t)\|_{L^2}^2+\|\Theta_t(\cdot,t)\|_{H_0^1}^2+\|\Theta(\cdot,t)\|_{H^3}^2.
\end{aligned}
\end{equation}
We also denote $M_0$ as a constant associated with the initial energy 
\begin{equation}\label{M_0}
   \begin{aligned}
    M_0=&\|\rho_0^{1/2}u_{0tt}\|_{L^2}^2+\|u_{0t}\|_{H^1}^2+\|u_0\|_{H^3}^2\\
	&+\|\rho_0^{1/2}\theta_{0tt}\|_{L^2}^2+\|\theta_{0t}\|_{H_0^1}^2+\|\theta_0\|_{H^3}^2+1.
	\end{aligned}
\end{equation}
where $u_{0t},~u_{0tt},~\theta_{0t},~\theta_{0tt}$ are determined by $u_0,~\theta_0$ and the equations \eqref{def-eta}, \eqref{1} at $t=0$ (just as \cite{Coutand2011, Gu2015}):
\begin{align*}
	u_{0t}&=\frac{1}{\rho_{0}}\big[- a^r_i (\frac{R\rho_0 \Theta}{J})_{,r} + a^r_j \mathbb{S}^{ij}_{\eta}[v]_{,r}\big]\Big|_{t=0}\\
	&=\frac{1}{\rho_0}\big[-\nabla (R\rho_0 \theta_0)+\dv (\mathbb{S}_{\text{Id}}[u_0])\big],\\
    \theta_{0t}&=\frac{1}{c_v\rho_{0}}\big[-\frac{R\rho_0\Theta}{J}a^r_iv^i_{,r}+  \mathbb{S}^{ij}_{\eta}[v]a^r_jv^i_{,r}+\kappa a^r_i(\nabla_{\eta}\Theta)^i_{,r}\big]\Big|_{t=0}\\
	&=\frac{1}{c_v\rho_0}\big[-R\rho_0\theta_0\dv u_0+\mathbb{S}_{\text{Id}}[u_0]:\nabla u_0 +\kappa \Delta \theta_0\big],\\
	u^i_{0tt}&=\partial_t \big\{\frac{1}{\rho_{0}}\big[- a^r_i (\frac{R\rho_0 \Theta}{J})_{,r} + a^r_j \mathbb{S}^{ij}_{\eta}[v]_{,r}\big]\big\} \Big|_{t=0},\\
	\theta_{0tt}&=\partial_t \big\{\frac{1}{c_v\rho_{0}}\big[-\frac{R\rho_0\Theta}{J}a^r_iv^i_{,r}+  \mathbb{S}^{ij}_{\eta}[v]a^r_jv^i_{,r}+\kappa a^r_i(\nabla_{\eta}\Theta)^i_{,r}\big]\big\} \Big|_{t=0}.
\end{align*}

In the following we denote $P$ as a generic polynomial.
And we shall use the notation ``$f\lesssim g $'' to represent ``$f\leq C g $'', where $C$ is a generic constant independent of $v$ and $\Theta$. Without further statements, $\iint \cdot \ dxdt$ or $\iint \cdot$ will be denoted as $\int_{0}^{T} \int_{\Omega} \cdot \ dxdt$. 

\vspace{0.3cm}
\begin{Thm}\label{mainthm}
	If the initial data $(\rho_0, u_0, \theta_0)$ satisfy $M_0<\infty$, 
	and the initial conditions \eqref{rho1}, \eqref{rho2}, \eqref{rho3}, \eqref{theta1},\eqref{theta2} and the following compatible condition hold,
	\begin{equation}\label{compatible}
	\theta_0,\bpartial \theta_0,\bpartial^2 \theta_0, \theta_{0t} \in H^1_0
	,~ \mathbb{S}^{i3}_{\text{Id}}[v_0]=0, \mathbb{S}^{i3}_{\text{Id}}[\bpartial v_0]=0~\text{on}~\Gamma.
	\end{equation}
	then there exists a unique strong solution $(v(x,t),\Theta(x,t))$ to \eqref{1} in $\Omega\times[0,T]$ for some small enough $T>0$ depending only on $M_0$ and 
	\begin{equation}
	\label{th}
	\sup\limits_{t\in[0,T]} E(v,\Theta) \lesssim P (M_0).
	\end{equation}
\end{Thm}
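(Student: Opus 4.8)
The plan is to follow the fixed-point scheme of Coutand--Shkoller, realizing the solution of the degenerate quasilinear parabolic system \eqref{1} as the unique fixed point of a linearization map. Fix a small $T>0$ and let $\mathcal{C}_T$ be the closed, bounded, convex set of pairs $(\bar v,\bar\Theta)$ that attain the prescribed initial data together with the induced time-derivatives $u_{0t},\theta_{0t},u_{0tt},\theta_{0tt}$ at $t=0$ and satisfy $\sup_{t\in[0,T]}E(\bar v,\bar\Theta)\le 2P(M_0)$, with an accompanying bound on the parabolic dissipation norms. Given $(\bar v,\bar\Theta)\in\mathcal{C}_T$, set $\bar\eta=\mathrm{Id}+\int_0^t\bar v\,ds$, form $\bar A=[D\bar\eta]^{-1}$, $\bar J=\det(D\bar\eta)$, $\bar a=\bar J\bar A$, and define $(v,\Theta)=\Phi(\bar v,\bar\Theta)$ as the solution of the \emph{linear} system obtained from \eqref{1} by freezing every geometric and transport/reaction coefficient at the barred quantities while keeping $\rho_0$ as the genuine degenerate weight on the time derivatives. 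Since $\bar\eta(0)=\mathrm{Id}$, for $T$ small $\bar J$ stays near $1$ and $\bar A$ near $\mathbb{I}_3$ uniformly, so the spatial operators $\bar a^r_j\mathbb{S}^{ij}_{\bar\eta}[\,\cdot\,]_{,r}$ and $\kappa\,\bar a^r_i(\bar A^k_i\,\cdot_{,k})_{,r}$ are uniformly elliptic with constants depending only on $M_0$, and the decoupled linear parabolic problems for $v$ (with the stress-free boundary condition) and for $\Theta$ (Dirichlet) are solvable by standard degenerate-parabolic theory.

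\textbf{A priori estimates.} The core of the argument is to show $\Phi(\mathcal{C}_T)\subseteq\mathcal{C}_T$, i.e., the bound \eqref{th} for the linear problem uniformly in the barred data. The plan is to differentiate the equations in time zero, one, and two times, and to test the momentum equation with $v,v_t,v_{tt}$ and the energy equation with $\Theta,\Theta_t,\Theta_{tt}$; the viscous and heat-conduction terms generate the dissipation integrals $\iint|Dv|^2$, $\iint|D\Theta|^2$ and their time-differentiated versions, which --- after invoking elliptic regularity for the stress system with the boundary condition $\bar a^3_j\mathbb{S}^{ij}_{\bar\eta}[v]=0$, and for the Dirichlet Laplacian-type operator governing $\Theta$ --- upgrade to the full $H^3$ control in \eqref{E}. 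Because $\rho_0=\mathcal{O}(d^\alpha)$ degenerates on $\Gamma$, the coefficient of $v_t$ and of $\Theta_t$ vanishes there, so the time-derivative quantities can be controlled only in the weighted norms $\|\rho_0^{1/2}\partial_t^k v\|_{L^2}$, $\|\rho_0^{1/2}\partial_t^k\Theta\|_{L^2}$; the low-order control of $v$ itself --- which does \emph{not} vanish on $\Gamma$, so Poincar\'e is unavailable --- is recovered through Hardy's inequality together with the decay rate \eqref{rho2}. Every commutator and nonlinear term coming from $\bar A-\mathbb{I}_3$, $\bar J-1$, and from the reaction terms $\tfrac{R\rho_0\Theta}{\bar J}\bar a^r_iv^i_{,r}$ and $\mathbb{S}^{ij}_{\bar\eta}[v]\bar a^r_jv^i_{,r}$ is estimated by $P(M_0)$ times a positive power of $T$ plus a small constant times the dissipation; absorbing the latter and closing with Gr\"onwall yields \eqref{th} once $T=T(M_0)$ is small enough.

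\textbf{Positivity of $\Theta$, contraction, and conclusion.} The positivity $\Theta>0$ in $\Omega\times(0,T]$ required by \eqref{1} follows from the strict sign $\nabla_{\vn}\theta_0<0$ on $\Gamma$ in \eqref{theta2} and the compatibility \eqref{compatible}: a barrier/maximum-principle argument for the interior-uniformly-parabolic $\Theta$-equation, using that $\Theta$ remains in $H^1_0$, keeps $\Theta$ positive inside for a short time, and (via \eqref{state1}) keeps the entropy bounded. To produce the fixed point I would estimate the difference $\Phi(\bar v_1,\bar\Theta_1)-\Phi(\bar v_2,\bar\Theta_2)$ of two iterates in a norm one order lower than $E$, where smallness of $T$ makes the map a contraction; since $\mathcal{C}_T$ is closed in this weaker metric, Banach's fixed point theorem produces a unique fixed point $(v,\Theta)$, which lies in $\mathcal{C}_T$ by the a priori bound and solves \eqref{1} because at the fixed point the barred coefficients equal the unbarred ones. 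Uniqueness in the full solution class comes from the same difference estimate applied directly to two solutions of \eqref{1}.

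\textbf{Main obstacle.} I expect the delicate step to be closing the top-order estimate: one must control $\|\rho_0^{1/2}v_{tt}\|_{L^2}$, $\|v_t\|_{H^1}$, $\|v\|_{H^3}$ and their $\Theta$-analogues \emph{simultaneously}, where the degenerate weight $\rho_0\sim d^\alpha$ forces weighted Hardy estimates for the lower-order pieces while the $H^3$ regularity has to be pulled out of the stress-free boundary condition, and where each geometric coefficient $\bar a,\bar A,\bar J$ is only as regular as $\int_0^t D\bar v$, so the commutator terms must be organized to carry a genuine power of $T$. Propagating this through the coupling between the momentum and energy equations without losing the density weight --- in particular handling $\mathbb{S}^{ij}_{\bar\eta}[v]\bar a^r_jv^i_{,r}$ and the pressure term $\bar a^r_i(\tfrac{R\rho_0\Theta}{\bar J})_{,r}$ at two time derivatives --- is the technical heart of the estimates of Section 4.
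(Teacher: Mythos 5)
Your overall architecture --- an iteration map $\Phi$ that solves a linear parabolic system with barred (frozen) geometric and transport coefficients, invariance of a bounded set under $\Phi$ via a priori estimates, and contraction in a norm one order lower than the energy --- is exactly the paper's scheme (the paper writes $\Xi$ for $\Phi$, uses a Galerkin construction in the invariance step, and contracts in the $V_T$-norm $\sup\|\rho_0^{1/2}(v,\Theta)\|_{L^2}^2 + \int\|(v,\Theta)\|_{H^1}^2\,dt$). However, the a priori estimate plan as stated has a gap.

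You propose to obtain the full $H^3$ control in $E(v,\Theta)$ by testing the equations differentiated zero, one, and two times in \emph{time} and then invoking elliptic regularity for the stress operator with the stress-free boundary condition. This cannot close as written, for two related reasons. First, the coefficients $\bar a,\bar A$ of the divergence-form elliptic operator $\bar a^r_j \mathbb{S}^{ij}_{\bar\eta}[\,\cdot\,]_{,r}$ are only as regular as $D\bar\eta = \mathrm{Id}+\int_0^t D\bar v$, namely $L^\infty_t H^2_x$; general elliptic regularity lifting $H^1$ source data to $H^3$ solutions requires strictly more coefficient regularity (roughly $H^3$ or $C^{1,\alpha}$), which the iterates do not supply. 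Second, what the paper actually uses in Section 4.4 is not off-the-shelf elliptic regularity but an algebraic extraction of the highest \emph{normal} derivative ($\partial_3^2 v_t$, $\partial_3^2 \Theta_t$, $\partial_3^3 v$, etc.) from the equation, with tangential derivative norms appearing explicitly on the right-hand side; e.g. the bound for $\int_0^t\|v_t\|_{H^2}^2\,dt'$ in \eqref{dvtx} requires $\int_0^t\|\bpartial v_t\|_{H^1}^2\,dt'$ as an input. Those tangential quantities cannot be produced by time-differentiated tests alone: one cannot test the momentum equation against $\partial_3$-derivatives of $v$ without producing uncontrolled boundary terms, precisely because $v$ does not vanish on $\Gamma$. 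The paper therefore devotes most of Section 4 (the estimates of $\bpartial v_t$, $\bpartial^2 v_t$, $\bpartial^3 v$ and their $\Theta$-analogues) to separate tangential energy estimates, which are then fed into the normal-derivative recovery. Your plan would need to incorporate this tangential/normal splitting --- which is standard in the Coutand--Shkoller framework you cite, but is missing from the explicit list of tests you write down.

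Two smaller points. Your positivity argument for $\Theta$ invokes a barrier/maximum-principle argument; the paper instead deduces $\Theta>0$ on a short time interval directly from the energy inequality \eqref{th} (via the time-integral of $\|\Theta_t\|_{H^3}$ controlling $\nabla_{\mathbf{n}}\Theta$ near $\Gamma$ and $\Theta$ in the interior), see Remark \ref{physical vacuum}. Either route should work, but the paper's is purely quantitative and stays inside the energy framework. Also, your set $\mathcal{C}_T$ is defined only through $\sup E\le 2P(M_0)$; the paper's $X_T$ carries the full dissipation functional $F$, not just $E$, and that is what the a priori estimate \eqref{a priori estimates} actually controls --- so the invariance step requires the larger functional as well.
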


\begin{Rem}
	Suppose that $(v,\Theta)$ is the strong solution  to $\eqref{1}$ in Theorem \ref{mainthm}. Then 
	$$(\frac{\rho_0}{J},v,\Theta)\circ \eta^{-1}$$
	is the strong solution to \eqref{FNS}.
\end{Rem}

\begin{Rem}
	If $\rho_0=\mathcal{O}(d^{\alpha})$, $\partial_3 \rho_0=\mathcal{O}(d^{\alpha-1})$, $d\partial_3^2 \rho_0=\mathcal{O}(d^{\alpha-1})$ as $d\rightarrow 0^+$ for $\alpha>\frac{2}{3}$, then \eqref{rho2} and \eqref{rho3} are satisfied.
\end{Rem}

\begin{Rem}\label{physical vacuum}
	Suppose that the energy inequality \eqref{th} holds, then \eqref{theta2} ensures that $\Theta$ keeps positive inside $\Omega$ in short time.
	
	Indeed, when $x$ is near the boundary, since $-C\leq\nabla_{\mathbf{n}} \theta_0\leq-\frac{1}{C}$ for some $C>0$ by \eqref{theta2} and 
    \begin{align*}
    \nabla_{\mathbf{n}} \Theta&=\nabla_{\mathbf{n}} \theta_0+ \int_{0}^{t} \nabla_{\mathbf{n}} \Theta_t,\\
    \left|\int_{0}^{t} \nabla_{\mathbf{n}} \Theta_t\right|
    &\leq t^{1/2}\left(\int_{0}^{t}\|D\Theta_t\|_{L^{\infty}}^2\right)^{1/2}\\
    &\lesssim t^{1/2}\left(\int_{0}^{t}\|\Theta_t\|_{H^3}^2\right)^{1/2}
    \lesssim t^{1/2}P(M_0)^{1/2},
    \end{align*}
	then one has $-2C\leq\nabla_{\mathbf{n}} \theta_0\leq-\frac{1}{2C}$ in short time.
	Noted that $\Theta=0$ on the boundary, it follows that  $\Theta$ is positive when $d(x)\leq d_0$ for some $d_0>0$.
	When $d(x)\geq d_0$, there exists a constant $\delta(d_0)>0$ such that $\theta_0(x)\geq \delta(d_0)$.
	Then for any $d(x)\geq d_0$
	\begin{equation*}
	\Theta\geq\theta_0-\left|\int_{0}^{t} \Theta_t \right|\geq \delta(d_0)-t^{1/2}P(M_0)^{1/2},
	\end{equation*}  
	which shows that $\Theta\geq \frac{\delta(d_0)}{2}$ in short time.
	
	Moreover, if considering the non-isentropic Euler equations ($\mu=\lambda=\kappa=0$) of $\rho, u$ and $S$, then the sound speed is given by $\sqrt{\frac{\partial p(\rho, S)}{\partial \rho}}=\sqrt{\gamma \frac{p}{\rho}}=\sqrt{\gamma R\theta}$.
    Therefore, the singular condition on the derivatives of the temperature profiles \eqref{theta2} coincides with the physical vacuum condition for the isentropic fluids (\cite{Jang2009, Coutand2011, Luo2014}) in certain sense. 
\end{Rem}

\begin{Rem}
	\label{rem_entropy}
	Suppose that the energy inequality \eqref{th} holds. 
	If initially the entropy $S(\cdot,0)=S_0(\cdot)$ is bounded, i.e. $\|S_0\|_{L^{\infty}} \leq C$, and $\alpha=\frac{1}{\gamma-1}$ in \eqref{rho2}, then it follows from the regularity of the strong solutions that 
	\begin{align*}
	\frac{\bar A}{R}e^{S(\gamma-1)/R}&=\frac{\Theta}{\rho^{\gamma-1}}
	=\mathcal{O}(\frac{\Theta}{d})=\mathcal{O}(|\nabla_{\mathbf{n}} \Theta|)~\qquad\text{as}~d\rightarrow 0^+.
	\end{align*}
	Remark \ref{physical vacuum} has shown that $\nabla_{\mathbf{n}}\Theta$ has upper and lower bounds on the boundary in short time, and so is the entropy.
	Therefore, this result achieves the aim to find solutions to \eqref{FNS} with vanishing density on the free boundary and bounded entropy in the space-time, which extends the results of the isentropic case (\cite{Coutand2012, Jang2015, Jang2010}) to the non-isentropic case.
\end{Rem}

\vspace{0.7cm}
\section{Preliminaries}~

Before using the energy method to prove Theorem \ref{mainthm}, in this section we will present the inequalities which will be frequently used in the energy estimates.
\vspace{0.3cm}

\subsection{The Korn's inequality and the Hardy's inequality}~

\begin{Lem}[Korn's inequality]
	Suppose that $v$ is a function defined in $\Omega$ and $\int_{\Omega} \sum\limits_{i,j}(v^i_{,j} +v^j_{,i})^2\ dx+\|v\|_{L^2}^2<\infty$. Then 
\begin{equation}\label{Korn}
\|v\|_{H^1}^2\lesssim \int_{\Omega} \sum\limits_{i,j}(v^i_{,j} +v^j_{,i})^2\ dx+\|v\|_{L^2}^2.
\end{equation}
\end{Lem}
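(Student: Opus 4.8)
The plan is to derive this ``second'' Korn inequality --- the one that holds without any boundary condition on $v$ --- from the Nečas negative-norm estimate together with the elementary identity that expresses the second derivatives of $v$ through first derivatives of its symmetrized gradient. Write $e_{ij}(v):=\tfrac12(v^i_{,j}+v^j_{,i})$, so that the right-hand side of \eqref{Korn} is comparable to $\|e(v)\|_{L^2}^2+\|v\|_{L^2}^2$ and the hypothesis is precisely $v,\,e(v)\in L^2(\Omega)$.

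The starting point is the distributional identity
\begin{equation*}
\partial_k\big(v^i_{,j}\big)=\partial_j e_{ik}(v)+\partial_i e_{jk}(v)-\partial_k e_{ij}(v),
\end{equation*}
which is valid for any $v\in L^2(\Omega)$ (it is just the sum rule applied to the definition of $e_{ij}$, both sides being interpreted in $H^{-2}$). Fix indices $i,j$ and set $g:=v^i_{,j}\in\mathcal{D}'(\Omega)$. Since $v\in L^2(\Omega)$ we have $g\in H^{-1}(\Omega)$ with $\|g\|_{H^{-1}}\lesssim\|v\|_{L^2}$, and since each term on the right of the displayed identity is a first derivative of a component of $e(v)\in L^2(\Omega)$, we also get $\nabla g\in H^{-1}(\Omega)$ with $\|\nabla g\|_{H^{-1}}\lesssim\|e(v)\|_{L^2}$.

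Next I would invoke the Nečas inequality on the bounded Lipschitz domain $\Omega=\mathbb{T}^2\times(0,1)$: every $g\in\mathcal{D}'(\Omega)$ with $g\in H^{-1}(\Omega)$ and $\nabla g\in H^{-1}(\Omega)$ in fact lies in $L^2(\Omega)$ and satisfies
\begin{equation*}
\|g\|_{L^2(\Omega)}\lesssim\|g\|_{H^{-1}(\Omega)}+\|\nabla g\|_{H^{-1}(\Omega)}.
\end{equation*}
Applied to $g=v^i_{,j}$ this yields $\|v^i_{,j}\|_{L^2}\lesssim\|v\|_{L^2}+\|e(v)\|_{L^2}$; squaring, summing over $i,j\in\{1,2,3\}$, and adding $\|v\|_{L^2}^2$ gives \eqref{Korn}, and in particular shows $v\in H^1$ a posteriori.

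The only non-elementary ingredient --- and hence the main obstacle --- is the Nečas inequality itself, which is of essentially the same depth as Korn's inequality; I would either cite it as a standard fact for Lipschitz domains (equivalently, the characterization $\{g\in\mathcal{D}'(\Omega):g,\nabla g\in H^{-1}(\Omega)\}=L^2(\Omega)$ with equivalent norms), or, exploiting the product structure of $\Omega$, establish the needed negative-norm bound by hand: expanding in a Fourier series in the periodic variables $(x_1,x_2)$ reduces the claim to a one-parameter family of one-dimensional estimates on $(0,1)$ with parameter the horizontal frequency $\xi$, each provable by an explicit integration-by-parts argument, the essential subtlety being uniformity of the constant in $\xi$. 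The algebraic identity, the two duality bounds, and the final summation are all routine.
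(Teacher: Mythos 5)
Your route is exactly the one the paper delegates to the literature: the cited Ciarlet reference proves the second Korn inequality via the J.-L.~Lions/Ne\v{c}as lemma, writing second derivatives of $v$ as first derivatives of $e(v)$ and then bootstrapping $v^i_{,j}\in H^{-1}$ together with $\nabla(v^i_{,j})\in H^{-1}$ to $v^i_{,j}\in L^2$. So in substance the proposal matches the paper's proof; it is a correct and complete sketch. One small slip to fix: the displayed identity
\begin{equation*}
\partial_k\big(v^i_{,j}\big)=\partial_j e_{ik}(v)+\partial_i e_{jk}(v)-\partial_k e_{ij}(v)
\end{equation*}
is mis-indexed: expanding and using symmetry of second derivatives, the right-hand side evaluates to $v^k_{,ij}$, not $v^i_{,jk}$. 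The correct combination is
\begin{equation*}
v^i_{,jk}=\partial_j e_{ik}(v)+\partial_k e_{ij}(v)-\partial_i e_{jk}(v),
\end{equation*}
i.e.\ the signs on the last two terms should be interchanged. Since $i,j,k$ all range over $\{1,2,3\}$ and you only need that every second distributional derivative of $v$ lies in $H^{-1}(\Omega)$ with norm controlled by $\|e(v)\|_{L^2}$, the conclusion is unaffected, but the identity as written is not the one you are invoking.
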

\begin{proof}
	See \cite[Theorem~2.1]{Ciarlet2010} or \cite[Lemma~5.1]{Zajaczkowski1993}. The original Korn's inequality can be found in \cite{Friedrichs1947}.
\end{proof}

\begin{Lem}[Hardy's inequality]
If $k\in \mathbb{R}$ and $g$ satisfies $\int_{0}^{1} s^k[g^2+(g')^2]\ ds<\infty$, then we have that
\begin{enumerate}
\item[(1)] if $k>1$, then
\begin{equation}
\int_{0}^{1}s^{k-2}g^2 \ ds\lesssim  \int_{0}^{1}s^k[g^2+(g')^2]\ ds.
\end{equation}
\item[(2)] if $k<1$, then  $g$ has a trace at $x=0$ and
\begin{equation}
\int_{0}^{1} s^{k-2} (g-g(0))^2 ds \lesssim \int_{0}^{1} s^k(g')^2 \ ds.
\end{equation}
\end{enumerate}
\end{Lem}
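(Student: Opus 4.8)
The plan is to establish both inequalities by the classical weighted Hardy device: integrate by parts against the primitive $s^{k-1}$ of $(k-1)s^{k-2}$, estimate the resulting cross term by Cauchy--Schwarz and Young's inequality, and close the estimate by an elementary quadratic argument applied to the truncated integral, which avoids assuming a priori that the weighted integral on the left-hand side is finite.

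For part (1), where $k>1$: I would start from the identity $\tfrac{d}{ds}(s^{k-1}g^2)=(k-1)s^{k-2}g^2+2s^{k-1}gg'$ and integrate it over $[a,1]$ for $a>0$, obtaining $(k-1)\int_a^1 s^{k-2}g^2\,ds=g(1)^2-a^{k-1}g(a)^2-2\int_a^1 s^{k-1}gg'\,ds$. Since $k>1$, the boundary term $-a^{k-1}g(a)^2$ has the favourable sign and may be discarded. Writing $X_a:=\int_a^1 s^{k-2}g^2\,ds$, I would bound the cross term by $|\int_a^1 s^{k-1}gg'\,ds|\le X_a^{1/2}\big(\int_0^1 s^k(g')^2\,ds\big)^{1/2}$, and control the trace $g(1)^2$ by averaging $g(1)=g(s)+\int_s^1 g'$ over $s\in[1/2,1]$ to get $g(1)^2\lesssim\int_{1/2}^1(g^2+(g')^2)\,ds\lesssim\int_0^1 s^k(g^2+(g')^2)\,ds$ (using $s^k\ge 2^{-k}$ there). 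This gives $(k-1)X_a\lesssim Y+X_a^{1/2}Y^{1/2}$ with $Y:=\int_0^1 s^k(g^2+(g')^2)\,ds$, a bound uniform in $a$; the elementary fact that $X\le A+BX^{1/2}$ forces $X\lesssim A+B^2$ then yields $X_a\lesssim Y$ uniformly, and letting $a\to0^+$ proves both the finiteness of $\int_0^1 s^{k-2}g^2$ and the claimed estimate by monotone convergence.

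For part (2), where $k<1$: I would first show $g$ has a trace at $0$. For $0<a<b$, Cauchy--Schwarz gives $|g(b)-g(a)|^2\le\big(\int_a^b s^{-k}\,ds\big)\big(\int_a^b s^k(g')^2\,ds\big)\le\tfrac{1}{1-k}\int_0^b s^k(g')^2\,ds\to0$ as $b\to0^+$, so $g(a)$ is Cauchy and $g(0):=\lim_{a\to0^+}g(a)$ exists, with the quantitative bound $(g(a)-g(0))^2\le\tfrac{a^{1-k}}{1-k}\int_0^a s^k(g')^2\,ds$. Then, setting $h:=g-g(0)$ so that $h'=g'$, I would integrate $\tfrac{d}{ds}(s^{k-1}h^2)$ over $[a,1]$ to get $(1-k)\int_a^1 s^{k-2}h^2\,ds=-h(1)^2+a^{k-1}h(a)^2+2\int_a^1 s^{k-1}hh'\,ds$. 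Now the boundary term at $a$ has the unfavourable sign, but the trace estimate bounds it by $a^{k-1}h(a)^2\le\tfrac{1}{1-k}\int_0^1 s^k(g')^2\,ds$, so it is absorbed. The same Cauchy--Schwarz and quadratic argument as above then closes the estimate and delivers $\int_0^1 s^{k-2}(g-g(0))^2\,ds\lesssim\int_0^1 s^k(g')^2\,ds$.

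The only genuinely delicate point, and the one I would be most careful about, is the boundary behaviour at $s=0$: one cannot presume that $\int_0^1 s^{k-2}g^2$ is finite (that is part of the conclusion), nor that $g$ (respectively $g-g(0)$) decays fast enough at $0$ to annihilate the boundary term produced by the integration by parts. The quadratic-inequality trick on the truncated integrals $X_a$ resolves exactly this, producing an $a$-uniform bound; in part (2) it must be combined with the quantitative trace estimate to handle the boundary term of unfavourable sign. All manipulations only require $g\in H^1_{\mathrm{loc}}(0,1)$, which is what $\int_0^1 s^k(g')^2\,ds<\infty$ provides on the open interval, so no density argument is needed.
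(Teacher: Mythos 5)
Your proof is correct. Note that the paper does not prove this lemma at all: it simply cites \cite[Lemma~4.2]{Jang2014}, so there is no in-paper argument to compare against; what you have written is the classical weighted-Hardy proof that such references contain, carried out carefully. The three points that usually cause trouble are all handled properly: (i) the integration by parts is performed on the truncated interval $[a,1]$, where $\int_a^1 s^{k-2}g^2\,ds$ is finite a priori, and the quadratic absorption $X\le A+BX^{1/2}\Rightarrow X\lesssim A+B^2$ gives a bound uniform in $a$, so no circular finiteness assumption is made; (ii) the boundary term at $s=a$ is discarded by sign when $k>1$ and, when $k<1$, is controlled by the quantitative trace estimate $(g(a)-g(0))^2\le\frac{a^{1-k}}{1-k}\int_0^a s^k(g')^2\,ds$, which is exactly what makes $a^{k-1}(g(a)-g(0))^2$ bounded uniformly in $a$; (iii) the trace $g(1)$ is legitimate because $s^k$ is bounded below on $[1/2,1]$, so $g\in H^1(1/2,1)$. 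Two cosmetic remarks: the implicit constants depend on $k$ (through $(k-1)^{-1}$ or $(1-k)^{-1}$), which is consistent with the lemma being stated for fixed $k$; and your part (2) in fact never uses the $\int_0^1 s^k g^2\,ds$ portion of the hypothesis, so you prove a slightly stronger statement than asserted, which is harmless. The only tacit assumption is that $g$ is (locally) absolutely continuous on $(0,1]$ so that the fundamental theorem of calculus and the identity $\frac{d}{ds}(s^{k-1}g^2)=(k-1)s^{k-2}g^2+2s^{k-1}gg'$ apply; this is the natural reading of the hypothesis $\int_0^1 s^k[(g)^2+(g')^2]\,ds<\infty$ and is how the cited reference interprets it as well.
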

\begin{proof}
	See \cite[Lemma~4.2]{Jang2014}.
\end{proof}

\begin{Cor}
	Suppose that $v$ is a function defined on $\Omega$ and $\int_{\Omega} \sum\limits_{i,j}(v^i_{,j} +v^j_{,i})^2\ dx+\|\rho_0^{1/2} v\|_{L^2}$. Then
	\begin{equation}\label{Korn2}
	\|v\|_{H^1}^2\lesssim \int_{\Omega} \sum\limits_{i,j}(v^i_{,j} +v^j_{,i})^2\ dx+\|\rho_0^{1/2} v\|_{L^2}.
	\end{equation}
\end{Cor}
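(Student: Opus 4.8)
The plan is to derive \eqref{Korn2} from Korn's inequality \eqref{Korn} by a compactness argument, whose role is to replace the full $L^2$ norm of $v$ appearing on the right of \eqref{Korn} by the degenerate weighted norm $\|\rho_0^{1/2}v\|_{L^2}$; the price to pay is that one must invoke the strict interior positivity of $\rho_0$ from \eqref{rho1}. Write $N(v):=\int_\Omega\sum_{i,j}(v^i_{,j}+v^j_{,i})^2\,dx$. Note that $\Omega=\mathbb{T}^2\times(0,1)$ is a bounded Lipschitz domain, that $\|\rho_0\|_{L^\infty}\le C$ by \eqref{rho3}, and that, since by \eqref{Korn} the finiteness of $N(v)$ together with $v\in L^2$ already forces $v\in H^1$, we may and do assume $v\in H^1(\Omega)$ throughout.

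Assume \eqref{Korn2} were false with a uniform constant. Then there exists $v_n\in H^1(\Omega)$ with $\|v_n\|_{H^1}=1$ and $N(v_n)+\|\rho_0^{1/2}v_n\|_{L^2}^2\to 0$. By the Rellich--Kondrachov compactness theorem, along a subsequence $v_n\to v$ strongly in $L^2(\Omega)$ and weakly in $H^1(\Omega)$. Since $\rho_0$ is bounded, $\|\rho_0^{1/2}(v_n-v)\|_{L^2}\le\|\rho_0\|_{L^\infty}^{1/2}\|v_n-v\|_{L^2}\to 0$, hence $\rho_0^{1/2}v_n\to\rho_0^{1/2}v$ in $L^2$; combined with $\|\rho_0^{1/2}v_n\|_{L^2}\to0$ this gives $\rho_0^{1/2}v=0$ a.e., and therefore $v=0$ a.e.\ in $\Omega$ because $\rho_0>0$ in $\Omega$ by \eqref{rho1}. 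Applying \eqref{Korn} to $v_n$ then yields $\|v_n\|_{H^1}^2\lesssim N(v_n)+\|v_n\|_{L^2}^2$, whose right-hand side tends to $0+\|v\|_{L^2}^2=0$; this contradicts $\|v_n\|_{H^1}=1$. Hence \eqref{Korn2} holds.

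The only genuine obstacle I anticipate is recognizing that a purely constructive proof is not available: a weighted Poincar\'e--Wirtinger inequality bounds $\|v\|_{L^2}$ only by $\|Dv\|_{L^2}+\|\rho_0^{1/2}v\|_{L^2}$, and the gradient term cannot be absorbed into $\|v\|_{H^1}^2$ without a smallness factor, so the loop does not close by hand. It is precisely the compactness of the embedding $H^1\hookrightarrow L^2$ --- which turns ``the weighted norm vanishes'' into ``$v\equiv 0$'' via the positivity of $\rho_0$ in the interior --- that makes the estimate go through; the remaining ingredients (boundedness of $\rho_0$, Rellich, and one further application of \eqref{Korn}) are routine.
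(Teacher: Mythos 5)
Your proof is correct, but it is genuinely different from the paper's argument, so a comparison is in order. The paper proceeds constructively: it first iterates the Hardy inequality to obtain $\int_0^1 g^2\,ds \lesssim \int_0^1 s^{\alpha}g^2 + s^2(g')^2\,ds$ for any exponent $\alpha<\infty$, rescales to get $\int_0^\omega g^2\,ds \lesssim \omega^{-\alpha}\int_0^\omega s^\alpha g^2\,ds + \int_0^\omega s^2(g')^2\,ds$, and then splits the $x_3$-integration into a near-boundary layer and the interior, where the decay rate \eqref{rho2} is used to bound $\|\rho_0^{-1}\|_{L^\infty(\{\omega<x_3<1-\omega\})}\lesssim\omega^{-\alpha}$. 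This yields $\|v\|_{L^2}^2\lesssim \omega^{-\alpha}\|\rho_0^{1/2}v\|_{L^2}^2 + \omega^2\|v\|_{H^1}^2$, which is fed into \eqref{Korn} and closed by taking $\omega$ small. Your compactness argument (Rellich--Kondrachov plus a contradiction) reaches the same estimate without Hardy's inequality and, notably, without invoking the decay rate \eqref{rho2}: you only need $\rho_0>0$ a.e.\ in $\Omega$ from \eqref{rho1} and $\rho_0\in L^\infty$ from \eqref{rho3}. This is shorter and proves the statement under weaker hypotheses, at the usual cost of being non-constructive, with no control on how the implicit constant depends on $\rho_0$; the paper's route, while longer, yields an explicit constant whose dependence on $\omega$ and the exponent $\alpha$ in \eqref{rho2} is visible, and it reuses the Hardy inequality that was in any case introduced in the Preliminaries. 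One small remark: the hypothesis line and the right-hand side of \eqref{Korn2} are missing a square on $\|\rho_0^{1/2}v\|_{L^2}$ (an evident typo, by comparison with \eqref{Korn}); your proof uses the squared form, which is the intended one. Also, your framing note that the loop ``does not close by hand'' slightly undersells the paper's method --- its Hardy estimate does produce a genuine $\omega^2$-smallness factor, so the loop does close constructively.
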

\begin{proof}
	It follows from the Hardy's inequality that,
	\begin{align*}
	\int_{0}^{1}g^2 \ ds&\lesssim\int_{0}^{1}s^2[g^2+(g')^2]\ ds\lesssim  \int_{0}^{1}s^4[g^2+(g')^2]\ ds+ \int_{0}^{1}s^2(g')^2\ ds\\
	&\lesssim \cdots\\
	&\lesssim \int_{0}^{1}s^{2m}g^2+(s^2+s^4+\cdots+s^{2m})(g')^2\ ds,\quad \text{for any~}m\in\mathbb{N}
	\end{align*}
    Therefore, for any $\alpha<\infty$, after choosing $2m\geq \alpha$, one can obtain, 
	\begin{equation*}
	\int_{0}^{1}g^2 \ ds\lesssim \int_{0}^{1}s^{\alpha}g^2+s^2(g')^2\ ds,
	\end{equation*}
	where we have employed the fact that $s\in[0,1]$.
	Then by rescaling the inequality, we have
	$$
	\int_{0}^{\omega}g^2 \ ds \lesssim \frac{1}{\omega^{\alpha}}\int_{0}^{\omega}s^{\alpha}g^2\ ds
	+\int_{0}^{\omega}s^2(g')^2 \ ds.
	$$
	Therefore, after applying the above inequality to the integral with respect to $x_3$, one has
	$$
	\begin{aligned}
	\|v\|_{L^2}^2&=\int_{0}^{1}\int_{0}^{1}\left( \int_{\omega}^{1-\omega}+\int_{0}^{\omega}+\int_{1-\omega}^{1}|v|^2 \ dx_3\right) dx_2 dx_1\\
	&\lesssim \|\frac{1}{\rho_0}\|_{L^{\infty}(\{\omega<x_3<1-\omega\})}\int_{\Omega}\rho_0 |v|^2 \ dx+ \frac{2}{\omega^{\alpha}}\int_{\Omega}\rho_0 |v|^2\ dx \\
	&\qquad +  \int_{0}^{1}\int_{0}^{1}\left( \int_{0}^{\omega}+\int_{1-\omega}^{1} d^2 \left|\frac{\partial v}{\partial x_3}\right|^2 \ dx_3\right) dx_2 dx_1\\
	&\lesssim \frac{1}{\omega^{\alpha}} \|\rho_0^{1/2} v\|_{L^2}^2 + \omega^2 \|v\|_{H^1}^2,
	\end{aligned}
	$$
	where $\|\frac{1}{\rho_0}\|_{L^{\infty}(\{\omega<x_3<1-\omega\})}\lesssim \frac{1}{\omega^{\alpha}}$ for \eqref{rho2}.
	Then substituting the above inequality to the Korn's inequality \eqref{Korn} yields
     $$	
     \|v\|_{H^1}^2\lesssim \int_{\Omega} \sum\limits_{i,j}(v^i_{,j} +v^j_{,i})^2\ dx+\frac{2}{\omega^{\alpha}} \|\rho_0^{1/2} v\|_{L^2}^2 + \omega^2 \|v\|_{H^1}^2.
     $$
	With $0<\omega<1$ small enough, this finishes the proof.
\end{proof}

\vspace{0.3cm}
\subsection{Differentiation and inequalities of $J$, $a$ and $A$}~

When differentiating the equations \eqref{1} and doing energy estimates, differentiating $J$,
$a$ and $A$ and estimating their norms are unavoidable.
By the definition of J and a, the time and spatial derivatives of J and a are
\begin{equation}\label{deri_Ja}
\begin{aligned}
\partial_t J&=a^s_r v^r_{,s}=J\dv_{\eta} v,\\
D J&=a^s_r D\eta^r_{,s},\\
\partial_t a^k_i&=J^{-1}v^r_{,s}(a^s_ra^k_i-a^s_ia^k_r),\\
D a^k_i&=J^{-1}D\eta^r_{,s}(a^s_ra^k_i-a^s_ia^k_r).
\end{aligned}
\end{equation}
The derivatives of $A$ can be obtained by the relation $a = JA$ and the above derivatives.
With the above expressions, one can directly obtain the important Piola identity
\begin{equation}
a^k_{i,k}=0.
\end{equation}


The following inequalities can be obtained by direct calculations of the derivatives of $J$, $a$ and $A$.
\begin{align*}
& |a|\lesssim |D\eta|^2,\qquad\qquad\qquad\qquad\qquad|\partial_t a |\lesssim  \Big(J^{-1}|Dv| |a|^2\Big), \\
& |\partial_t^2 a |\lesssim \Big(J^{-1}|Dv_t| |a|^2+J^{-2}|Dv|^2|a|^3\Big),\\
& |\bpartial a |\lesssim  \Big(J^{-1}|\bpartial  D \eta| |a|^2\Big),\qquad\qquad |\bpartial\partial_t a |\lesssim  \Big(J^{-1}|\bpartial  D v| |a|^2+J^{-2}|\bpartial D\eta| |Dv| |a|^3\Big),\\
& |\bpartial^2 a |\lesssim  \Big(J^{-1}|\bpartial^2  D \eta| |a|^2 + J^{-2}|\bpartial  D \eta|^2|a|^3\Big),\\
& |\bpartial^2 \partial_t a|\lesssim  \Big(J^{-1}|\bpartial^2   D v| |a|^2 +J^{-2}|\bpartial^2   D \eta| |Dv| |a|^3 + J^{-2}|\bpartial  D \eta| |\bpartial  D v| |a|^3+J^{-3}|\bpartial  D \eta|^2 |Dv| |a|^4\Big),\\
&|\bpartial^3 a|\lesssim  \Big(J^{-1}|\bpartial^3   D \eta| |a|^2 +J^{-2}|\bpartial^2   D \eta| |\bpartial D \eta| |a|^3 +J^{-3}|\bpartial  D \eta|^3 |a|^4 \Big)\\
& |A|\lesssim J^{-1}|D\eta|^2,\qquad\qquad\qquad\qquad |\partial_t A |\lesssim  \Big(J^{-2}|Dv| |a|^2\Big),\\
& |\partial_t^2 A |\lesssim  \Big(J^{-2}|Dv_t| |a|^2+J^{-3}|Dv|^2|a|^3\Big),\\
& |\bpartial A |\lesssim  \Big(J^{-2}|\bpartial  D \eta| |a|^2\Big),\qquad\qquad |\bpartial\partial_t A |\lesssim  \Big(J^{-2}|\bpartial  D v| |a|^2+J^{-3}|\bpartial D\eta| |Dv| |a|^3\Big),\\
& |\bpartial^2 A |\lesssim  \Big(J^{-2}|\bpartial^2  D \eta| |a|^2 + J^{-3}|\bpartial  D \eta|^2|a|^3\Big),\\
& |\bpartial^2 \partial_t A|\lesssim  \Big(J^{-2}|\bpartial^2   D v| |a|^2 +J^{-3}|\bpartial^2   D \eta| |Dv| |a|^3 + J^{-3}|\bpartial  D \eta| |\bpartial  D v|  |a|^3+J^{-4}|\bpartial  D \eta|^2 |Dv| |a|^4\Big),\\
& |\bpartial^3  A|\lesssim  \Big(J^{-2}|\bpartial^3   D \eta| |a|^2 +J^{-3}|\bpartial^2   D \eta| |\bpartial D\eta| |a|^3  +J^{-4}|\bpartial  D \eta|^3 |a|^4\Big).
\end{align*}

\vspace{0.3cm}
\subsection{Sobolev embedding and the Minkowski integral inequalities}~

In the energy estimates, we will use the following Sobolev embedding inequalities
\begin{equation}
\|\cdot\|_{L^{\infty}}\lesssim \|\cdot\|_{H^2},\ \|\cdot\|_{L^4}\lesssim  \|\cdot\|_{H^1},\ \|\cdot\|_{L^6}\lesssim  \|\cdot\|_{H^1},
\end{equation}
and the Minkowski integral inequality
\begin{equation}
\Big\|\int_{0}^{t} f(\cdot,t') \ dt'\Big\|_{L^p}\leq \int_{0}^{t} \|f(\cdot,t')\|_{L^p}\ dt'.
\end{equation}

\vspace{0.7cm}
\section{A priori estimates}~

Define
\begin{equation}
\begin{aligned}
F(v,\Theta)(t)=&E(v,\Theta)(t)+\int_{0}^{t} \|v_{tt}(\cdot,t')\|_{H^1}^2+\|v_t(\cdot,t')\|_{H^3}^2+\|\bpartial v(\cdot,t')\|_{H^3}^2
\\
&\qquad+\|\Theta_{tt}(\cdot,t')\|_{H_0^1}^2+\|\Theta_t(\cdot,t')\|_{H^3}^2+\|\bpartial \Theta(\cdot,t')\|_{H^3}^2\ dt'.
\end{aligned}
\end{equation}
In this section we plan to prove:
\begin{Prop}\label{prop-of-estimates}
	Suppose that the assumptions in Theorem \ref{mainthm} hold, and $(v(x,t),\Theta(x,t))$ is the smooth solution to \eqref{1} in $\Omega\times[0,T]$ for some $T>0$, and the following a priori assumption is satisfied
	\begin{equation}
	\label{a priori assumption}
	|D\eta|\lesssim 1,\
	\frac{1}{2}\leq J(t)\leq \frac{3}{2} \qquad \text{for}\ t\in[0,T].
	\end{equation}
	Then 
    \begin{equation}
    \label{a priori estimates}
    \sup\limits_{t\in[0,T]}F(v,\Theta)\lesssim  TP\Big(\sup\limits_{t\in[0,T]}F(v,\Theta)\Big)+P(M_0).
    \end{equation}
\end{Prop}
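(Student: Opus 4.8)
The plan is to carry out high-order energy estimates for the two parabolic equations in \eqref{1}, treating the velocity equation $\eqref{1}_1$ and the temperature equation $\eqref{1}_2$ in parallel, and to bootstrap all norms appearing in $F(v,\Theta)$ from the natural dissipation of these equations. Because the coefficients of the time derivatives, namely $\rho_0$ and $c_v\rho_0$, degenerate on $\Gamma$, the only time-weighted quantities one can hope to control directly are $\|\rho_0^{1/2}v_{tt}\|_{L^2}$ and $\|\rho_0^{1/2}\Theta_{tt}\|_{L^2}$; the remaining $H^1$ and $H^3$ norms must be recovered from the elliptic (dissipation) part via the Korn/Hardy corollary \eqref{Korn2} and standard elliptic regularity for the operator $a^r_j\mathbb S^{ij}_\eta[\cdot]_{,r}$ (whose ellipticity and the bounds on $J,a,A$ are guaranteed by the a priori assumption \eqref{a priori assumption} together with the formulas in \S3.2). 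For the temperature, since $\Theta$ and enough of its tangential derivatives lie in $H^1_0$ by the compatibility condition \eqref{compatible}, the ordinary Poincar\'e inequality applies and the estimates are somewhat cleaner.

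**Key steps, in order.** First I would derive the $\rho_0$-weighted second-time-derivative estimates: differentiate $\eqref{1}_1$ and $\eqref{1}_2$ twice in $t$, test with $v_{tt}$ and $\Theta_{tt}$ respectively, integrate by parts in space (the boundary terms vanish or are controlled using $\eqref{1}_4$ and $\mathbb S^{i3}_{\mathrm{Id}}[v_0]=0$), and absorb the commutators $[\partial_t^2, a^r_j\mathbb S^{ij}_\eta]$, $[\partial_t^2, \text{pressure}]$, etc., using the pointwise bounds for $\partial_t a,\partial_t^2 a,\partial_t A,\partial_t^2A$ from \S3.2 and Sobolev embedding. This yields $\sup_t\|\rho_0^{1/2}v_{tt}\|_{L^2}^2 + \int_0^t\|v_{tt}\|_{H^1}^2 \lesssim M_0 + TP(\sup F)$ after using \eqref{Korn2} on $v_{tt}$, and similarly for $\Theta_{tt}$. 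Next, with $v_{tt},\Theta_{tt}$ controlled, I would read the equations $\eqref{1}_1$, $\eqref{1}_2$ as elliptic systems for $v_t$ (resp. $v$) with right-hand side involving $\rho_0 v_{tt}$ (resp. lower-order terms) and apply elliptic regularity to gain $\|v_t\|_{H^1}$ from $\rho_0 v_{tt}\in L^2$-type bounds and then $\|v\|_{H^3}$, $\|v_t\|_{H^3}$, $\|\bpartial v\|_{H^3}$ after differentiating tangentially; the tangential derivatives $\bpartial$ are used rather than $\partial_3$ precisely because they commute with the boundary condition (hence $\mathbb S^{i3}_{\mathrm{Id}}[\bpartial v_0]=0$ in \eqref{compatible}), and the missing normal derivatives are recovered algebraically from the equation itself. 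The temperature norms $\|\Theta_t\|_{H^1_0}$, $\|\Theta\|_{H^3}$, $\|\Theta_t\|_{H^3}$, $\|\bpartial\Theta\|_{H^3}$ follow the same elliptic-bootstrap pattern, using $\Delta_\eta$-ellipticity and the $H^1_0$ membership from \eqref{compatible}. Throughout, every product term is split into a "bad" factor carried by $\sup_t F$ and a factor integrated in time that produces the $T$ prefactor (or a term already bounded by $P(M_0)$ from the initial data), which is where the structure of the inequality \eqref{a priori estimates} comes from.

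**Main obstacle.** The hardest part will be the closure of the top-order estimates near the boundary: recovering the full $H^3$ norms of $v$ and the $H^3$ norm of $\bpartial v$ (and likewise for $\Theta$) without a direct normal-derivative energy estimate, i.e. reconstructing $\partial_3$-derivatives algebraically from $\eqref{1}_1$, $\eqref{1}_2$ while keeping every coefficient bound uniform via \eqref{a priori assumption} and controlling the degenerate-weight terms $\|d(|D^2\rho_0|+|\bpartial D^2\rho_0|)\|_{L^2}$ from \eqref{rho3} wherever $\rho_0$ is differentiated twice. A secondary delicate point is handling the pressure term $a^r_i(R\rho_0\Theta/J)_{,r}$: after two time derivatives this produces $\rho_0\Theta_{tt}$-type terms that are only $L^2$ against the $\rho_0^{1/2}$ weight, so they must be paired carefully with $v_{tt}$ using Cauchy-Schwarz in the weighted space rather than naively. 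Once these borderline terms are organized so that each carries either a factor of $T$ or is dominated by the initial energy, the estimate \eqref{a priori estimates} follows by summing the velocity and temperature contributions and invoking \eqref{Korn2} a final time to upgrade $L^2$-dissipation bounds to $H^1$.
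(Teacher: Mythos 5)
Your strategy is essentially the paper's: it carries out the weighted $v_{tt},\Theta_{tt}$ energy estimates (with the Korn--Hardy corollary \eqref{Korn2} for $v$ and Poincar\'e for $\Theta$), follows with tangential-derivative control, and recovers the normal derivatives algebraically from the elliptic principal part, using \eqref{a priori assumption} to keep the coefficients non-degenerate and \eqref{rho3} to absorb the terms where $\rho_0$ is differentiated twice near $\Gamma$. The one imprecision worth noting is that the paper does not invoke classical elliptic regularity: it obtains the tangential quantities $\bpartial^k\partial_t^j v$, $\bpartial^k\partial_t^j\Theta$ by a further sequence of parabolic (dissipation-producing) energy estimates, obtains $\|v_t\|_{H^1}$, $\|(v,\Theta)\|_{H^3}$ at each $t$ simply by time-integrating those dissipations, and only then extracts the $\partial_3$-derivatives by the algebraic comparison with the Lam\'e principal part; the boundary terms in the $v_{tt}$-step vanish because of $\eqref{1}_4$ at each $t>0$, with \eqref{compatible} needed only to make the initial data $u_{0t},u_{0tt},\theta_{0t},\theta_{0tt}$ admissible.
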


\begin{Rem}\label{rem-assumption}
	Indeed, when the assumptions in Theorem \ref{mainthm} hold, $(v,\Theta)$ is the regular solution to \eqref{1} in $\Omega \times[0,T]$ and $\sup\limits_{t\in[0,T]} F(v,\Theta)<2P(M_0)$,
	\eqref{a priori assumption} can be directly verified in short time following the arguments below:
	\begin{equation}
	\begin{aligned}
	|D\eta| &\leq \int_{0}^{t} \|Dv\|_{L^{\infty}}\ dt' + |D\eta_0|\leq \int_{0}^{t} \|v\|_{H^3}\ dt' + |D\eta_0|\\
	&\leq t \sup\limits_{t'\in[0,t]}F^{1/2} +M_0\leq t \Big(2P(M_0)\Big)^{1/2}+M_0,\\
	|J-1|   &\leq \int_{0}^{t} \|J_t\|_{L^{\infty}}\ dt'\leq \int_{0}^{t} \|D\eta\|_{L^{\infty}}^2\|Dv\|_{L^{\infty}} \ dt' \\
	&\leq  t \Big(t \sup\limits_{t'\in[0,t]}F^{1/2} +M_0\Big)^2 \sup\limits_{t'\in[0,t]}F^{1/2} \leq t \Big\{t[2P(M_0)]^{1/2} +M_0\Big\}^2 \Big(2P(M_0)\Big)^{1/2}.
	\end{aligned}
	\end{equation}
\end{Rem}

This section comprises four parts: Section 4.1 is the preliminaries which are the estimates directly obtained by the Sobolev embedding inequalities or the Minkowski integral inequality; Section 4.2 and Section 4.3 are the part of higher order estimates and lower oder estimates respectively, which are derived by differentiating the equation \eqref{1}, multiplying proper functions and integrating them; Section 4.4 is the elliptic estimates, which make use of the ellipticity of diffusion terms. 
For simplicity, we write  $E(t):=E(v,\Theta)(t)$ and $F(t):=F(v,\Theta)(t)$.

\vspace{0.3cm}

\subsection{Preliminaries}~
\vspace{0.3cm}

\textsl{\textbf{Estimates of $\|v_t(\cdot,t)\|_{H^1},~\|\Theta(\cdot,t)\|_{H_0^1},~ 
		\|(v, \Theta)(\cdot,t)\|_{H^3}$:}}
Direct calculations give that
\begin{equation}
\label{direct}
\begin{aligned}
\|v_t(\cdot,t)\|_{H^1}^2
&\leq \left(\int_{0}^{t}\|v_{tt}\|_{H^1}\ dt'+M_0^{1/2}\right)^2\\
&\leq \left( \int_{0}^{t} 1 \ dt'  \right)
\left(\int_{0}^{t}\|v_{tt}\|_{H^1}^2\ dt'\right)+M_0
\leq t \sup\limits_{t'\in[0,t]}F+M_0,\\
\|\Theta_t(\cdot,t)\|_{H_0^1}^2
&\leq t\sup\limits_{t'\in[0,t]}F+M_0,\\
\|\Big(v,\Theta\Big)(\cdot,t)\|_{H^3}^2
&\leq \left( \int_{0}^{t}\|\Big(v_t,\ \Theta_t\Big)\|_{H^3}\ dt'+M_0^{1/2}\right)^2
\leq t \sup\limits_{t'\in[0,t]}F+M_0.
\end{aligned}
\end{equation}

\textsl{\textbf{Estimates of $a$, $A$:}}
By the a priori assumption \eqref{a priori assumption}, in the following we will use the coarser estimates of $a$ and $A$ shown below
\begin{align*}
& |a|,\ |A|\lesssim 1, \\
& |\partial_t a |,\ |\partial_t A |\lesssim  \Big(|Dv| \Big),\\
& |\partial_t^2 a |,\ |\partial_t^2 A |\lesssim \Big( |Dv_t|+|Dv|^2\Big),\\
& |\bpartial a |,\ |\bpartial A |\lesssim  \Big( |\bpartial  D \eta|\Big),\\
& |\bpartial\partial_t a |,\ |\bpartial\partial_t A |\lesssim  \Big(|\bpartial  D v|+|\bpartial D\eta| |Dv|\Big),\\
& |\bpartial^2 a |,\ |\bpartial^2 A |\lesssim  \Big(|\bpartial^2  D \eta| + |\bpartial  D \eta|^2\Big),\\
& |\bpartial^2 \partial_t a|,\ |\bpartial^2\partial_t A |\lesssim  \Big( |\bpartial^2   D v| +|\bpartial^2   D \eta| |Dv|  + |\bpartial  D \eta| |\bpartial  D v|+|\bpartial  D \eta|^2 |Dv|\Big),\\
&|\bpartial^3 a|,\ |\bpartial^3 A |\lesssim  \Big( |\bpartial^3   D \eta| +|\bpartial^2   D \eta| |\bpartial D \eta| +|\bpartial  D \eta|^3 \Big).
\end{align*}

\textsl{\textbf{Other direct estimates:}}
It follows from the Sobolev embedding inequalities and
the Minkowski integral inequality,  
and the above estimates \eqref{direct} that: 
if $t<1$ and \eqref{a priori assumption} holds, then
\begin{align*}
&\|Dv(\cdot,t)\|_{L^{\infty}}^2+\| D^2 v(\cdot,t)\|_{L^4}^2+\|\Big(D^3 v,\ Dv_t\Big)(\cdot,t)\|_{L^2}^2\lesssim t \sup\limits_{t'\in[0,t]}F+M_0,\\
&\|\bpartial D\eta(\cdot,t)\|_{L^{\infty}}^2+\|\bpartial D^2 \eta(\cdot,t)\|_{L^4}^2+\| \bpartial D^3 \eta(\cdot,t)\|_{L^2}^2 \lesssim t \sup\limits_{t'\in[0,t]}F+M_0,\\
&\| D^2 \eta(\cdot,t)\|_{L^4}^2+\|  D^3 \eta(\cdot,t)\|_{L^2}^2 \lesssim 
( t\sup\limits_{t'\in[0,t]}\|v\|_{H^3}+M_0^{1/2})^2\lesssim t  \sup\limits_{t'\in[0,t]}F+M_0,\\
&\|\bpartial \rho_0\|_{L^4}\lesssim 1,
\end{align*}
\begin{align*}
\| A^i_j-\delta^i_j\|_{L^{\infty}}^2
&\lesssim  \Big(t\sup\limits_{t'\in[0,t]}\|\partial_t A\|_{L^{\infty}}\Big)^2
\lesssim \Big(t\sup\limits_{t'\in[0,t]}\|D v\|_{L^{\infty}}\Big)^2
\lesssim t^2 \sup\limits_{t'\in[0,t]}F,\\
\| aA,\  a,\  A\|_{L^{\infty}}^2
&\lesssim \|A-\delta\|_{L^{\infty}}^2+\|\delta\|_{L^{\infty}}^2
\lesssim t P(\sup\limits_{t'\in[0,t]}F)+P(M_0),\\
\|\partial_t (aA),\ \partial_t a,\ \partial_t A\|_{L^{\infty}}^2
&\lesssim \|Dv\|_{L^{\infty}} ^2
\lesssim t P(\sup\limits_{t'\in[0,t]}F)+P(M_0),\\
\|\partial_t^2(aA),\ \partial_t^2 a,\ \partial_t^2 A\|_{L^2}^2
&\lesssim \Big( \|Dv_t\|_{L^2}+\|Dv\|_{L^{\infty}}^2 \Big)^2
\lesssim t P(\sup\limits_{t'\in[0,t]}F)+P(M_0),\\
\|\bpartial (aA),\ \bpartial a,\ \bpartial A\|_{L^{\infty}}^2
&\lesssim \|\bpartial D \eta\|_{L^\infty} ^2
\lesssim t P(\sup\limits_{t'\in[0,t]}F)+P(M_0),\\
\|\bpartial\partial_t(aA),\ \bpartial \partial_t a,\ \bpartial \partial_t A\|_{L^4}^2
&\lesssim \Big(\|\bpartial Dv\|_{L^4}+\|\bpartial D\eta\|_{L^{\infty}}\|Dv\|_{L^{\infty}}   \Big)^2
\lesssim t P(\sup\limits_{t'\in[0,t]}F)+P(M_0),\\
\|\bpartial^2 (aA),\ \bpartial^2 a, \bpartial^2 A\|_{L^4}^2
&\lesssim \Big(\|\bpartial^2 D\eta\|_{L^4}+\|\bpartial D\eta\|_{L^{\infty}}^2\Big)^2
\lesssim t P(\sup\limits_{t'\in[0,t]}F)+P(M_0),\\
\|\bpartial^2 \partial_t (aA),\ \bpartial^2 \partial_t a,\ \bpartial^2 \partial_t A\|_{L^2}^2
&\lesssim \Big(\|\bpartial^2 Dv\|_{L^2}+\|\bpartial ^2 D\eta\|_{L^4}\|Dv\|_{L^{\infty}}+\|\bpartial D\eta\|_{L^{\infty}}\|\bpartial Dv\|_{L^4}\\
&+\|\bpartial D\eta\|_{L^{\infty}}^2\|Dv\|_{L^{\infty}}\Big)^2
\lesssim t P(\sup\limits_{t'\in[0,t]}F)+P(M_0),\\
\|\bpartial^3 (aA),\ \bpartial^3 a, \bpartial^3 A\|_{L^2}^2
&\lesssim \Big(\|\bpartial^3 D\eta\|_{L^2}+\|\bpartial^2 D\eta\|_{L^4}\|\bpartial D\eta\|_{L^{\infty}}+\|\bpartial D\eta\|_{L^{\infty}}^3\Big)^2\\
&\lesssim t P(\sup\limits_{t'\in[0,t]}F)+P(M_0),\\
\|D(aA),\ Da,\ DA\|_{L^4}^2
&\lesssim \| D^2\eta\|_{L^4}^2\lesssim t P(\sup\limits_{t'\in[0,t]}F)+P(M_0),\\
\|\bpartial D (aA),\ \bpartial D a,\ \bpartial D A\|_{L^4}^2
&\lesssim \Big( \|\bpartial D^2 \eta\|_{L^4}+\|D^2 \eta\|_{L^4}\|\bpartial D\eta\|_{L^{\infty}} \Big)^2
\lesssim t P(\sup\limits_{t'\in[0,t]}F)+P(M_0),\\
\|D^2(aA),\ D^2a,\ D^2A\|_{L^2}^2
&\lesssim \Big(\|D^3\eta\|_{L^2}+\|D^2\eta\|_{L^4}^2 \Big)^2
\lesssim t P(\sup\limits_{t'\in[0,t]}F)+P(M_0),\\
\|\bpartial D^2(aA),\ \bpartial D^2a,\ \bpartial D^2A\|_{L^2}^2
&\lesssim \Big(\|\bpartial D^3\eta\|_{L^2}+\|D^3\eta\|_{L^2}\|\bpartial D\eta\|_{L^{\infty}}+\|\bpartial D^2 \eta\|_{L^4}\|D^2\eta\|_{L^4}\\
&\|D^2\eta\|_{L^4}^2\|\bpartial D\eta\|_{L^{\infty}} \Big)^2
\lesssim t P(\sup\limits_{t'\in[0,t]}F)+P(M_0),\\
\|\partial_t D(aA),\ \partial_t Da,\ \partial_t DA\|_{L^4}^2
&\lesssim \Big( \| D^2 v\|_{L^4}+\|D^2\eta\|_{L^4}\|Dv\|_{L^{\infty}}\Big)^2
\lesssim t P(\sup\limits_{t'\in[0,t]}F)+P(M_0).
\end{align*}
Note that the estimate of $\|A^i_j-\delta^i_j\|_{L^{\infty}}$ does not involve $M_0$.
\vspace{0.5cm}

\subsection{Higher order energy estimates}~

\vspace{0.3cm}
\textsl{\textbf{Estimates of $\|\rho_0^{1/2}v_{tt}(\cdot,t)\|_{L^2}^2, \int_{0}^{t}\|v_{tt}\|_{H^1}^2 \ dt'$:}}~~
After taking twice temporal derivatives of $\eqref{1}_1$ and taking inner product with $v_{tt}$, that is $\int_{0}^{t}\int_{\Omega}\partial_t^2 \Big(\eqref{1}_1\Big)\cdot  v_{tt} \ dx dt'$, we have 
$$
\frac{1}{2}\int_{\Omega} \rho_0 |v_{tt}|^2 \bigg|_{t'=0}^{t}  
-\underbrace{\iint  \partial_t^2 (a^r_i\frac{R\rho_0\Theta}{J}) v^i_{tt,r}}_{\mathrm{I}}
+\underbrace{\iint \partial_t^2 (a^r_j\mathbb{S}^{ij}_{\eta}[v]) v^i_{tt,r}}_{\mathrm{II}}=0.
$$
(Notice that here we omit the summation symbol $\sum\limits_{i=1,2,3}$ of the dot product of vectors $ \partial_t^2 (a^r_j\mathbb{S}^{ij}_{\eta}[v]) v^i_{tt,r}$ for convenience.)
Here $\mathrm{II}$ can be written as
$$
\mathrm{II}=\iint a^r_j\Big[\mu (A^{k}_j  v^i_{tt,k}+A^k_i  v^j_{tt,k})+\lambda (A_l^k  v^l_{tt,k})\delta^i_j\Big] v^i_{tt,r}+\mathrm{II}',
$$
where $\mathrm{II}'$ denotes all the remaining terms, and
\begin{align*}
|\mathrm{II}'|&\lesssim  \iint \Big[|\partial_t^2(aA)| |Dv|
   +|\partial_t (aA)| |Dv_t|\Big]|Dv_{tt}|\\
&\lesssim  \int_{0}^{t} \Big[\|\partial_t^2(aA)\|_{L^2} \|Dv\|_{L^{\infty}}
   +\|\partial_t (aA)\|_{L^{\infty}} \|Dv_t\|_{L^2}\Big]\|Dv_{tt}\|_{L^2}\ dt'\\
&\lesssim \int_{0}^{t}\frac{1}{\epsilon}
   \Big[\|\partial_t^2(aA)\|_{L^2} \|v\|_{H^3}
      +\|\partial_t (aA)\|_{L^{\infty}} \|v_t\|_{H^1}\big]^2
   +\epsilon \|Dv_{tt}\|_{L^2}^2\ dt'\\
&\lesssim\frac{1}{\epsilon} tP(\sup\limits_{t'\in[0,t]} F)+\frac{1}{\epsilon}P(M_0)
   + \epsilon\int_{0}^{t} \|Dv_{tt}\|_{L^2}^2\ dt'.
\end{align*}
Noted that $\frac{\partial}{\partial y_j} v_{tt}^i=A_j^k v^i_{tt,k}$, one has 
\begin{align*}
\mathrm{II}-\mathrm{II}'& =\iint J \bigg[ \mu \Big( \frac{\partial}{\partial y_j}v_{tt}^i+\frac{\partial}{\partial y_i}v_{tt}^j   \Big) +\lambda \Big( \frac{\partial}{\partial y_l}v_{tt}^l \Big) \delta^i_j \bigg] \frac{\partial}{\partial y_j}v_{tt}^i\\
&=\iint J\mu \bigg[ \sum\limits_{i,j} \Big( \frac{\partial}{\partial y_j}v_{tt}^i   \Big)^2 +\sum\limits_{i,j} \Big( \frac{\partial}{\partial y_j}v_{tt}^i   \Big)\Big( \frac{\partial}{\partial y_i}v_{tt}^j   \Big)\bigg] +J\lambda \Big( \sum\limits_{i} \frac{\partial}{\partial y_i}v_{tt}^i \Big)^2 \\
&=\iint J \bigg[2\mu \sum\limits_{i} \Big( \frac{\partial}{\partial y_i}v_{tt}^i   \Big)^2 +\lambda \Big(\sum\limits_{i} \frac{\partial}{\partial y_i}v_{tt}^i   \Big)^2\bigg] +J \mu \sum\limits_{i>j}\Big(  \frac{\partial}{\partial y_j}v_{tt}^i +\frac{\partial}{\partial y_i}v_{tt}^j \Big)^2\\
&\geq\begin{cases}
\iint J (2\mu+3\lambda) \sum\limits_{i} \Big( \frac{\partial}{\partial y_i}v_{tt}^i   \Big)^2 +J \mu \sum\limits_{i>j}\Big(  \frac{\partial}{\partial y_j}v_{tt}^i +\frac{\partial}{\partial y_i}v_{tt}^j \Big)^2, &\text{if}~ \lambda\leq 0,\\
\iint J (2\mu) \sum\limits_{i} \Big( \frac{\partial}{\partial y_i}v_{tt}^i   \Big)^2 +J \mu \sum\limits_{i>j}\Big(  \frac{\partial}{\partial y_j}v_{tt}^i +\frac{\partial}{\partial y_i}v_{tt}^j \Big)^2, &\text{if}~ \lambda\geq 0,
\end{cases}
\end{align*}
\begin{align*}
&\geq \iint J \min\{ \frac{2\mu+3\lambda}{4},\frac{\mu}{2}\}\sum\limits_{i,j}\Big(  \frac{\partial}{\partial y_j}v_{tt}^i +\frac{\partial}{\partial y_i}v_{tt}^j \Big)^2 
\geq \frac{1}{C}\iint  \sum\limits_{i,j}\Big(A^k_j  v_{tt,k}^i+A^k_i  v_{tt,k}^j\Big)^2\\
&\geq\frac{1}{C}\iint  \sum\limits_{i,j}\Big(\delta^k_j  v_{tt,k}^i+\delta^k_i  v_{tt,k}^j\Big)^2-C|A-\delta| |Dv_{tt}|^2 \ dxdt'\\
&\geq\frac{1}{C}\iint\sum\limits_{i,j}( v_{tt,j}^i+ v_{tt,i}^j)^2 \ dxdt'
-C \int_{0}^{t}  (t' \sup\limits_{[0,t']}F^{1/2} ) \| Dv_{tt}\|_{L^2}^2\ dt'\\
&\geq\frac{1}{C}\iint\sum\limits_{i,j}( v_{tt,j}^i+ v_{tt,i}^j)^2-CtP(\sup\limits_{t'\in[0,t]}F).
\end{align*}
Together with the Korn's inequality \eqref{Korn2}, this yields
\begin{align*}
\mathrm{II}-\mathrm{II}' &\geq \frac{1}{C}\int_{0}^{t} \|v_{tt}\|_{H^1}^2\ dt'
   -C\int_{0}^{t}\|\rho_0^{1/2} v_{tt}\|_{L^2}^2\ dt'-CtP(\sup\limits_{t'\in[0,t]}F)\\
&\geq \frac{1}{C}\int_{0}^{t} \|v_{tt}\|_{H^1}^2\ dt'-CtP(\sup\limits_{t'\in[0,t]}F).
\end{align*}
On the other hand,
\begin{align*}
|\mathrm{I}|&\lesssim \iint \Big(|\partial_t^2 A| |\rho_0\Theta|
+|\partial_t A| |\rho_0 \Theta_t|+|A| |\rho_0 \Theta_{tt}|\Big) |Dv_{tt}| \\
&\lesssim \int_{0}^{t} \Big(\|\partial_t^2 A\|_{L^2} \|\rho_0\|_{L^{\infty}}\|\Theta\|_{L^{\infty}}
+\|\partial_t A\|_{L^{\infty}} \|\rho_0\|_{L^{\infty}}\|\Theta_t\|_{L^2}\\
&\qquad+\|A\|_{L^{\infty}}\|\rho_0^{1/2}\|_{L^{\infty}}\|\rho_0^{1/2}\Theta_{tt}\|_{L^2}\Big) \|Dv_{tt}\|_{L^2} \ dt'\\
&\lesssim \int_{0}^{t} \frac{1}{\epsilon}\Big[
   \Big( t'P(\sup\limits_{[0,t']}F)+P(M_0)  \Big)^{1/2}
   \Big(\|\Theta\|_{H^2}+\|\Theta_t\|_{H_0^1}+\|\rho_0^{1/2}\Theta_{tt}\|_{L^2}\Big)  \Big]^2+\epsilon \|Dv_{tt}\|_{L^2}^2\ dt'\\
&\lesssim \frac{1}{\epsilon}tP(\sup\limits_{t'\in[0,t]}F)+\frac{1}{\epsilon}P(M_0)+\epsilon \int_{0}^{t} \|Dv_{tt}\|_{L^2}^2\ dt'.
\end{align*}

After combining the estimates of $\mathrm{I}, \mathrm{II}$ with small enough $\epsilon$ (independent of $v$ or $\Theta$), we have that
\begin{equation}\label{vtt}
\|\rho_0^{1/2}v_{tt}(\cdot,t)\|^2_{L^2}+\int_{0}^{t}\|v_{tt}\|_{H^1}^2\ dt'
\lesssim tP(\sup\limits_{t'\in[0,t]}F)+P(M_0).
\end{equation}

\vspace{0.3cm}
\textsl{\textbf{Estimates of $\|\rho_0^{1/2}\Theta_{tt}(\cdot,t)\|_{L^2}^2, \int_{0}^{t}\|\Theta_{tt}\|_{H_0^1}^2 \ dt'$:}}~~After taking twice temporal derivatives of $\eqref{1}_2$ and taking inner product with $\Theta_{tt}$, that is $\int_{0}^{t}\int_{\Omega}\partial_t^2 \Big(\eqref{1}_2\Big)\cdot  \Theta_{tt}\ dx dt'$, we have
\begin{align*}
&\frac{c_v}{2}\int_{\Omega} \rho_0 |\Theta_{tt}|^2 \bigg|_{t'=0}^{t}  
+\underbrace{\iint  \partial_t^2 \Big(\frac{R\rho_0\Theta}{J} a^r_i v^i_{,r}\Big) \Theta_{tt}}_{\mathrm{I}}
-\underbrace{\iint \partial_t^2 (\mathbb{S}^{ij}_{\eta}[v] a^r_j v^i_{,r}) \Theta_{tt}}_{\mathrm{II}}\\
&\qquad\qquad\qquad
+\underbrace{\kappa\iint \partial_t^2 (a^r_i A^k_j \delta^{ij}\Theta_{,k}) \Theta_{tt,r}}_{\mathrm{III}}=0.
\end{align*}
$\mathrm{III}$ can be expended as 
$$
\mathrm{III}=\iint a^r_i A^{k}_j \delta^{ij}  \Theta_{tt,k}\Theta_{tt,r}+\mathrm{III}',
$$
where $\mathrm{III}'$ denotes all the remaining terms.
Similarly with $\mathrm{II}'$ in the estimates of $v_{tt}$, we have
\begin{align*}
|\mathrm{III}'|
&\lesssim \int_{0}^{t}\frac{1}{\epsilon}
   \Big[\|\partial_t^2(aA)\|_{L^2} \|\Theta\|_{H^3}
      +\|\partial_t (aA)\|_{L^{\infty}} \|\Theta_t\|_{H_0^1}\big]^2
   +\epsilon \|D\Theta_{tt}\|_{L^2}^2\ dt'\\
&\lesssim \frac{1}{\epsilon} tP(\sup\limits_{t'\in[0,t]} F)+\frac{1}{\epsilon}P(M_0)
   + \epsilon\int_{0}^{t} \|D\Theta_{tt}\|_{L^2}^2\ dt',\\
\mathrm{III}-\mathrm{III}'&\geq \frac{1}{C}\iint \delta_i^k \delta^{ir} \Theta_{tt,k}\Theta_{tt,r}
    -C\iint |A-\delta||D\Theta_{tt}|^2\\
&\geq \frac{1}{C} \int_{0}^{t} \|D\Theta_{tt}\|_{L^2}^2 \ dt'
    -C \int_{0}^{t}  \|A-\delta\|_{L^{\infty}}\|D\Theta_{tt}\|_{L^2}^2\\
&\geq \frac{1}{C} \int_{0}^{t} \|D\Theta_{tt}\|_{L^2}^2 \ dt'-CtP(\sup\limits_{t'\in[0,t]}F).
\end{align*}
Following similar argument as those in previous estimates on the temporal derivatives of $v$, one has
\begin{align*}
|\mathrm{I}|&\lesssim \iint \Big[ |\partial_t^2 (A\rho_0 \Theta)| |Dv|+|\partial_t (A\rho_0 \Theta)| |Dv_t|+ |A\rho_0\Theta| |Dv_{tt}| \Big] |\Theta_{tt}|\\
&\lesssim \int_{0}^{t} \Big[ \|\partial_t^2 (A\rho_0 \Theta)\|_{L^2} \|Dv\|_{L^{\infty}}
+\Big(\|\partial_t A\|_{L^{\infty}}\|\rho_0\|_{L^{\infty}}\|\Theta\|_{L^\infty}
+\|A\|_{L^{\infty}}\|\rho_0\|_{L^{\infty}}\|\Theta_t\|_{L^4}\Big) 
\|Dv_t\|_{L^2}\\
&\qquad
+\|A\|_{L^{\infty}} \|\rho_0\|_{L^{\infty}}\|\Theta\|_{L^{\infty}} \|Dv_{tt}\|_{L^2} \Big]\|\Theta_{tt}\|_{L^4}\\
&\lesssim \int_{0}^{t} \frac{1}{\epsilon}\bigg[ \Big( \|\partial_t^2 A\|_{L^2}\|\Theta\|_{H^2}+\|\partial_t A\|_{L^{\infty}}\|\Theta_t\|_{H_0^1}+ \|A\|_{L^{\infty}}\|\rho_0^{1/2}\Theta_{tt}\|_{L^2} \Big) \|v\|_{H^3}\\
&\qquad
+ \Big(\|\partial_t A\|_{L^{\infty}}\|\Theta\|_{H^2}
+\|A\|_{L^{\infty}}\|\Theta_t\|_{H_0^1}\Big)\|v_t\|_{H^1}
+\|A\|_{L^{\infty}}\|\Theta\|_{H^2}\|v_{tt}\|_{H^1}\bigg]^2+\epsilon\|\Theta_{tt}\|_{H_0^1}^2\ dt'\\
&\lesssim  \frac{1}{\epsilon}tP(\sup\limits_{t'\in[0,t]}F)+\frac{1}{\epsilon}P(M_0)+\int_{0}^{t}\frac{1}{\epsilon}\Big( t'P(\sup\limits_{[0,t']}F)+P(M_0)  \Big) \|v_{tt}\|_{H^1}^2+\epsilon \|\Theta_{tt}\|_{H_0^1}^2\ dt'\\
&\lesssim  \frac{1}{\epsilon}tP(\sup\limits_{t'\in[0,t]}F)+\frac{1}{\epsilon}P(M_0)+\frac{1}{\epsilon}\left(\int_{0}^{t}\|v_{tt}\|_{H^1}^2 \ dt'\right)^2+\epsilon \int_{0}^{t}\|\Theta_{tt}\|_{H_0^1}^2\ dt',\\
|\mathrm{II}|&\lesssim \iint \Big[ |\partial_t^2 (aA)| |Dv|^2 + |\partial_t (aA)| |Dv_t| |Dv|+ |(aA)|\Big( |Dv_{tt}||Dv|+|Dv_t|^2\Big)    \Big] |\Theta_{tt}|\\
&\lesssim \int_{0}^{t} \Big[ \|\partial_t^2 (aA)\|_{L^2} \|Dv\|_{L^{\infty}}^2 + \|\partial_t (aA)\|_{L^{\infty}} \|Dv_t\|_{L^2} \|Dv\|_{L^{\infty}}+ \|aA\|_{L^{\infty}}\Big(\|Dv_{tt}\|_{L^2}\|Dv\|_{L^4}\\
&\qquad
+\|Dv_t\|_{L^4}\|Dv_t\|_{L^2}\Big)    \Big] \|\Theta_{tt}\|_{L^4} \ dt'\\
&\lesssim \int_{0}^{t} \Big[ \Big( t'P(\sup\limits_{[0,t']}F)+P(M_0)  \Big)^{1/2}
  \Big(\|v\|_{H^3}^2+ \|v_t\|_{H^1} \|v\|_{H^3}+\|v_{tt}\|_{H^1}\|v\|_{H^2}
  \\
&\qquad+\|v_t\|_{H^2}\|v_t\|_{H^1}\Big)  \Big] \|\Theta_{tt}\|_{H_0^1} \ dt'\\
&\lesssim  \frac{1}{\epsilon}tP(\sup\limits_{t'\in[0,t]}F)+\frac{1}{\epsilon}P(M_0)
+\int_{0}^{t}\frac{1}{\epsilon}\Big( t'P(\sup\limits_{[0,t']}F)+P(M_0) \Big) (\|v_{tt}\|_{H^1}^2+\|v_t\|_{H^2}^2)+\epsilon \|\Theta_{tt}\|_{H_0^1}^2\ dt'\\
&\lesssim \frac{1}{\epsilon}tP(\sup\limits_{t'\in[0,t]}F)+\frac{1}{\epsilon}P(M_0)
+\frac{1}{\epsilon}\left(\int_{0}^{t}\|v_{tt}\|_{H^1}^2+\|v_t\|_{H^2}^2 \ dt'\right)^2
+\epsilon \int_{0}^{t}\|\Theta_{tt}\|_{H_0^1}^2\ dt'.
\end{align*}
Note that here in the estimates of $\mathrm{I}$ and $\mathrm{II}$, \eqref{direct} is applied to substituting $\|v_t\|_{H^1}$ or $\|v\|_{H^2}$.

Note that $\Theta_{tt}=0$ on $\Gamma$.
After combining the estimates of $\mathrm{I}, \mathrm{II}, \mathrm{III}$ 
and choosing small enough $\epsilon$ (independent of $v$ or $\Theta$), 
we have that
\begin{equation}\label{thetatt}
\|\rho_0^{1/2}\Theta_{tt}(\cdot,t)\|^2_{L^2}+\int_{0}^{t}\|\Theta_{tt}\|_{H_0^1}^2\ dt'
\lesssim tP(\sup\limits_{t'\in[0,t]}F)+P(M_0)+P\left(\int_{0}^{t}\|v_{tt}\|_{H^1}^2+\|v_t\|_{H^2}^2\ dt'\right).
\end{equation}

\vspace{0.3cm}
\textsl{\textbf{Estimates of $\int_{0}^{t}\|\bpartial^2 v_{t}\|_{H^1}^2\ dt'$:}}~~
Here we establish the estimates by the formular $\int_{0}^{t}\int_{\Omega}\bpartial^2\partial_t \Big(\eqref{1}_1\Big)\cdot \bpartial^2 v_{t}\ dx dt'$. It follows that
$$
\underbrace{\iint \bpartial^2\partial_t(\rho_0 v_t^i) \bpartial^2 v_t^i}_{\mathrm{I}}
-\underbrace{\iint \bpartial^2 \partial_t (a^r_i\frac{R\rho_0\Theta}{J}) \bpartial^2 v^i_{t,r}}_{\mathrm{II}}
+\underbrace{\iint \bpartial^2\partial_t (a^r_j\mathbb{S}^{ij}_{\eta}[v]) \bpartial^2v^i_{t,r}}_{\mathrm{III}}=0.
$$
Similarly as before, we rewrite $\mathrm{III}$ as
$$
\mathrm{III}=\iint a^r_j\Big[\mu (A^{k}_j \bpartial^2 v^i_{t,k}+A^k_i \bpartial^2 v^j_{t,k})
+\lambda (A_l^k \bpartial^2 v^l_{t,k})\delta^i_j\Big]
\bpartial^2 v^i_{t,r}+\mathrm{III}',
$$
where $\mathrm{III}'$ denotes all the remaining terms. Then it is straightforward that 
\begin{align*}
|\mathrm{III}'|&\lesssim  \int_{0}^{t}  \Big[\|\bpartial^2\partial_t(aA)\|_{L^2}\|Dv\|_{L^{\infty}}
  +\|\bpartial^2 (aA)\|_{L^4} \|Dv_t\|_{L^4}
  +\|\bpartial\partial_t (aA) \|_{L^4} \|\bpartial Dv\|_{L^4}\\
  &\qquad
  +\|\bpartial (aA)\|_{L^{\infty}}\|\bpartial Dv_t\|_{L^2}
  +\|\partial_t(aA)\|_{L^{\infty}} \|\bpartial^2 Dv\|_{L^2}
  \Big]\|\bpartial^2 Dv_{t}\|_{L^2}\ dt'\\
&\lesssim  \int_{0}^{t}  \Big[\|\bpartial^2\partial_t(aA)\|_{L^2} \|v\|_{H^3}
  +\|\bpartial^2 (aA)\|_{L^4} \|v_t\|_{H^2}
  +\|\bpartial\partial_t (aA) \|_{L^4} \|v\|_{H^3}\\
  &\qquad
  +\|\bpartial (aA)\|_{L^{\infty}} \|v_t\|_{H^2}
  +\|\partial_t(aA)\|_{L^{\infty}} \|v\|_{H^3}
  \Big]\|\bpartial^2 Dv_{t}\|_{L^2}\ dt'\\
&\lesssim\frac{1}{\epsilon} tP(\sup\limits_{t'\in[0,t]} F)+\frac{1}{\epsilon}P(M_0)
  +\int_{0}^{t} \frac{1}{\epsilon}(t'P(\sup\limits_{[0,t']}F)+P(M_0)) \|v_t\|_{H^2}^2 
  +\epsilon\|\bpartial^2 Dv_{t}\|_{L^2}^2\ dt'\\
&\lesssim \frac{1}{\epsilon}tP(\sup\limits_{t'\in[0,t]}F)+\frac{1}{\epsilon}P(M_0)
  +\frac{1}{\epsilon}\left(  \int_{0}^{t}\|v_t\|_{H^2}^2 \ dt'\right)^2
  +\epsilon\int_{0}^{t}\|\bpartial^2 Dv_{t}\|_{L^2}^2\ dt'.
\end{align*}
After applying the Korn's inequality we have
\begin{align*}
\mathrm{III}-\mathrm{III}'& 
\geq  \frac{1}{C}
\iint\sum\limits_{i,j}\Big(A^k_j  \bpartial^2 v_{t,k}^i+A^k_i  \bpartial^2 v_{t,k}^j\Big)^2 \\
&\geq \frac{1}{C}\iint  \sum\limits_{i,j}
    \Big(\delta^k_j  \bpartial^2 v_{t,k}^i+\delta^k_i \bpartial^2 v_{t,k}^j\Big)^2
    -C|A-\delta| |\bpartial^2 Dv_{t}|^2 \ dxdt'\\
&\geq\frac{1}{C}\iint\sum\limits_{i,j}
    ( \bpartial^2 v_{t,j}^i+ \bpartial^2 v_{t,i}^j)^2 \ dxdt'
    -C \int_{0}^{t}  (t'\sup\limits_{[0,t']}F^{1/2})  \| \bpartial^2 Dv_{t}\|_{L^2}^2\ dt'\\
& \geq \frac{1}{C}\int_{0}^{t} \|\bpartial^2 v_{t}\|_{H^1}^2 \ dt'
    -C\int_{0}^{t}\|\rho_0^{1/2} \bpartial^2 v_{t}\|_{L^2}^2\ dt'-CtP(\sup\limits_{t'\in[0,t]}F)\\
& \geq \frac{1}{C}\int_{0}^{t} \|\bpartial^2 v_{t}\|_{H^1}^2 \ dt'
    -C\int_{0}^{t} \|  v_t\|_{H^2}^2\ dt'-CtP(\sup\limits_{t'\in[0,t]}F).
\end{align*}
After integrating by part with respect to the tangential variables, one has 
\begin{align*}
|\mathrm{I}|&=\left| \iint \bpartial \partial_t(\rho_0 v_t) \cdot \bpartial^3 v_t \right|\\
&\lesssim \iint \Big( |\bpartial \rho_0| |v_{tt}|+|\rho_0| |\bpartial v_{tt}|  \Big)
          |\bpartial^3 v_t|\\
     &\lesssim \int_{0}^{t} \Big( \|\bpartial \rho_0\|_{L^4} \|v_{tt}\|_{L^4}+\|\rho_0\|_{L^{\infty}} \|\bpartial v_{tt}\|_{L^2}  \Big)
           \|\bpartial^3 v_t\|_{L^2} \ dt'\\
     &\lesssim \int_{0}^{t} ( \|\bpartial \rho_0\|_{H^1}+ \|\rho_0\|_{L^{\infty}})\|v_{tt}\|_{H^1} 
                \|\bpartial^3 v_t\|_{L^2} \ dt'\\
     &\lesssim  \frac{1}{\epsilon}\int_{0}^{t}  \|v_{tt}\|_{H^1}^2 \ dt'
           +  \epsilon \int_{0}^{t}  \|\bpartial^2 v_t\|_{H^1} ^2\ dt'. 
\end{align*}
The rest is to estimate $\mathrm{II}$, which can be done as follows.
\begin{align*}
|\mathrm{II}|&\lesssim \int_{0}^{t} \Big[\|A\|_{L^{\infty}}\|\bpartial^2 (\rho_0\Theta_t)\|_{L^2}
                 +\|\bpartial A\|_{L^{\infty}} \|\bpartial (\rho_0 \Theta_t)\|_{L^2}
                 +\|\bpartial^2 A\|_{L^4} \|\rho_0\Theta_t\|_{L^4} \ dt'\\
             &\qquad  
                 +\|\partial_t A\|_{L^{\infty}} \|\bpartial^2 (\rho_0\Theta)\|_{L^2}
                 +\|\bpartial\partial_t A\|_{L^4} \|\bpartial (\rho_0\Theta)\|_{L^4}
                 +\|\bpartial^2 \partial_t A\|_{L^2} \|\rho_0 \Theta\|_{L^{\infty}}
                 \Big]\|\bpartial^2 Dv_t\|_{L^2}\\
        &\lesssim \int_{0}^{t} \Big[ \Big( t'P(\sup\limits_{[0,t']}F)+P(M_0)  \Big)^{1/2}
        (\|\bpartial\rho_0\|_{H^1}+\|\rho_0\|_{L^{\infty}}) 
          \Big(\|\Theta_t\|_{H^2}+\|\Theta_t\|_{H_0^1}+\|\Theta\|_{H^2}\Big)
                \Big]\\
            &\qquad \cdot\|\bpartial^2 Dv_t\|_{L^2}\ dt'\\
     &\lesssim \frac{1}{\epsilon}tP(\sup\limits_{t'\in[0,t]}F)+\frac{1}{\epsilon}P(M_0)
         +\frac{1}{\epsilon}\int_{0}^{t}\|\Theta_t\|_{H^2}^2\ dt' 
         +\epsilon\int_{0}^{t} \|\bpartial^2 v_t\|_{H^1}^2\ dt'.
\end{align*}
Here we have used the following estimates,
\begin{align*}
\|\bpartial^2 (\rho_0\Theta_t)\|_{L^2}&\lesssim\|\bpartial^2\rho_0 \|_{L^2}\|\Theta_t\|_{L^{\infty}}
    +\|\bpartial\rho_0 \|_{L^4}\|\bpartial\Theta_t\|_{L^4}
    +\|\rho_0 \|_{L^{\infty}}\|\bpartial^2\Theta_t\|_{L^2}\\
                                      &\lesssim(\|\bpartial\rho_0\|_{H^1}+\|\rho_0\|_{L^{\infty}})\|\Theta_t\|_{H^2},\\
\|\bpartial (\rho_0 \Theta_t)\|_{L^2} &\lesssim\|\bpartial \rho_0\|_{L^4}\|\Theta_t\|_{L^4}
    +\|\rho_0\|_{L^{\infty}}\|\bpartial \Theta_t\|_{L^2}
   \lesssim(\|\bpartial\rho_0\|_{H^1}+\|\rho_0\|_{L^{\infty}})\|\Theta_t\|_{H_0^1},\\
\|\rho_0\Theta_t\|_{L^4}              & \lesssim\|\rho_0\|_{L^{\infty}}\| \Theta_t\|_{L^4}
    \lesssim (\|\bpartial\rho_0\|_{H^1}+\|\rho_0\|_{L^{\infty}})\|\Theta_t\|_{H_0^1},\\
\|\bpartial (\rho_0 \Theta)\|_{L^4}   &\lesssim\|\bpartial \rho_0\|_{L^4}\|\Theta\|_{L^{\infty}}
    +\|\rho_0\|_{L^{\infty}}\|\bpartial \Theta\|_{L^4}
       \lesssim(\|\bpartial\rho_0\|_{H^1}+\|\rho_0\|_{L^{\infty}})\|\Theta\|_{H^2},\\
\|\rho_0\Theta\|_{L^{\infty}}         &\lesssim \|\rho_0\|_{L^{\infty}}\| \Theta\|_{L^{\infty}}
       \lesssim (\|\bpartial\rho_0\|_{H^1}+\|\rho_0\|_{L^{\infty}})\|\Theta\|_{H^2}.
\end{align*}

After combining the estimates of $\mathrm{I}, \mathrm{II}, \mathrm{III}$ and choosing small enough $\epsilon$ (independent of $v$ or $\Theta$), we have that
\begin{equation}\label{vtxx}
\int_{0}^{t}\|\bpartial^2 v_t\|_{H^1}^2\ dt'
\lesssim tP(\sup\limits_{t'\in[0,t]}F)+P(M_0)+P\left(  \int_{0}^{t}\|v_{tt}\|_{H^1}^2+\|v_t\|_{H^2}^2 +\|\Theta_{t}\|_{H^2}^2\ dt'\right).
\end{equation}

\vspace{0.3cm}
\textsl{\textbf{Estimates of $\int_{0}^{t}\|\bpartial^2 \Theta_{t}\|_{H_0^1}^2\ dt'$:}}~~Here we evaluate the integral 
$\int_{0}^{t}\int_{\Omega}\bpartial^2 \partial_t \Big(\eqref{1}_2\Big)\cdot  \bpartial^2 \Theta_t\ dx dt'$. It follows that
\begin{align*}
c_v\underbrace{\iint \bpartial ^2 \Big(\rho_0\Theta_{tt}\Big)\bpartial^2 \Theta_t}_{\mathrm{I}}
+\underbrace{\iint  \bpartial^2 \partial_t \Big(\frac{R\rho_0\Theta}{J} a^r_i v^i_{,r}\Big) \bpartial^2 \Theta_t}_{\mathrm{II}}
-\underbrace{\iint \bpartial^2 \partial_t (\mathbb{S}^{ij}_{\eta}[v] a^r_j v^i_{,r})  \bpartial^2 \Theta_t}_{\mathrm{III}}&\\
+\underbrace{\kappa\iint \bpartial^2 \partial_t(a^r_i A^k_j \delta^{ij}\Theta_{,k}) \bpartial^2 \Theta_{t,r}}_{\mathrm{IV}}=0.
\qquad\qquad\qquad\qquad&
\end{align*}
Again, we rewrite $\mathrm{IV}$ as 
$$
\mathrm{IV}=\kappa \iint a^r_i A^k_j \delta^{ij} \bpartial^2 \Theta_{t,k}
\bpartial^2 \Theta_{t,r}+\mathrm{IV}',
$$
where $\mathrm{IV}'$ denotes all the remaining terms.
Similarly as before, we list the estimates below,
\begin{align*}
|\mathrm{IV}'|
&\lesssim \frac{1}{\epsilon}tP(\sup\limits_{t'\in[0,t]}F)+\frac{1}{\epsilon}P(M_0)
  +\frac{1}{\epsilon}\left(  \int_{0}^{t}\|\Theta_t\|_{H^2}^2 \ dt'\right)^2
  +\epsilon\int_{0}^{t}\|\bpartial^2 D\Theta_{t}\|_{L^2}^2\ dt',\\
\mathrm{IV}-\mathrm{IV}'&\geq \frac{1}{C}\iint \delta_i^k \delta^{ir} \bpartial^2 \Theta_{t,k}\bpartial^2 \Theta_{t,r}
-C \int_{0}^{t}  \|A-\delta\|_{L^{\infty}}^2 \|\bpartial^2 D\Theta_{t}\|_{L^2}^2\\
&\geq \frac{1}{C} \int_{0}^{t} \|\bpartial^2 D\Theta_{t}\|_{L^2}^2 \ dt'-CtP(\sup\limits_{t'\in[0,t]}F),\\
|\mathrm{I}| &\lesssim  \frac{1}{\epsilon}\int_{0}^{t}  \|\Theta_{tt}\|_{H_0^1}^2 \ dt'
           +  \epsilon \int_{0}^{t}  \|\bpartial^2 \Theta_t\|_{H_0^1} ^2\ dt', 
\end{align*}
\begin{align*}
|\mathrm{II}|
&\lesssim \int_{0}^{t} \Big( \|\bpartial^2\partial_t (A\rho_0 \Theta)\|_{L^2} \|Dv\|_{L^{\infty}}
    +\|\bpartial\partial_t (A\rho_0 \Theta)\|_{L^2} \|\bpartial Dv\|_{L^4}
    +\|\partial_t (A\rho_0 \Theta)\|_{L^4}  \|\bpartial^2 Dv\|_{L^2}\\
   &\qquad+\|\bpartial^2 (A\rho_0 \Theta)\|_{L^2} \|Dv_t\|_{L^4}
    +\|\bpartial (A\rho_0 \Theta)\|_{L^4} \|\bpartial Dv_t\|_{L^2}
    +\| (A\rho_0 \Theta)\|_{L^{\infty}}  \|\bpartial^2 Dv_t\|_{L^2}\Big)\|\bpartial^2 \Theta_t\|_{L^4}\\
&\lesssim \int_{0}^{t} \bigg[ \Big( (t'P(\sup\limits_{[0,t']}F)+P(M_0))^{1/2} \|\Theta_t\|_{H^2}+t'P(\sup\limits_{[0,t']}F)+P(M_0)  \Big)\|v\|_{H^3}\\
   & \qquad+ \Big( t'P(\sup\limits_{[0,t']}F)+P(M_0)  \Big)
    \Big(\|v\|_{H^3}+\|v_t\|_{H^2}+\|\bpartial^2 v_t\|_{H^1}\Big)\bigg] \|\bpartial^2 \Theta \|_{H_0^1}\\
&\lesssim \frac{1}{\epsilon}tP(\sup\limits_{t'\in[0,t]}F)+\frac{1}{\epsilon}P(M_0)+\frac{1}{\epsilon}\left(\int_{0}^{t}\|\Theta_t\|_{H^2}^2+\|v_t\|_{H^2}^2 +\|\bpartial^2 v_t\|_{H^1}^2\ dt'\right)^2+\epsilon \int_{0}^{t}\|\bpartial^2\Theta_{t}\|_{H_0^1}^2\ dt',
\end{align*}
\begin{align*}
|\mathrm{III}|
&\lesssim \int_{0}^{t} \Big[ \|\bpartial^2 \partial_t(aA)\|_{L^2} \|Dv\|_{L^{\infty}}^2 
            + \|\bpartial\partial_t (aA)\|_{L^4} \|\bpartial Dv\|_{L^4} \|Dv\|_{L^{\infty}}
            + \|\partial_t (aA)\|_{L^{\infty}}\\
              &\qquad\Big(\|\bpartial^2 Dv\|_{L^2}\|Dv\|_{L^{\infty}}+\|\bpartial Dv\|_{L^4}^2\Big)
            +\|\bpartial^2 (aA)\|_{L^4} \|Dv_t\|_{L^2} \|Dv\|_{L^{\infty}}
            + \|\bpartial (aA)\|_{L^{\infty}} \\  
              &\qquad\Big(\|\bpartial Dv_t\|_{L^2} \|Dv\|_{L^{\infty}}+\|Dv_t\|_{L^2}\|\bpartial Dv\|_{L^4} \Big)
           + \| aA\|_{L^{\infty}}\Big(\|\bpartial^2 Dv_t\|_{L^2}\|Dv\|_{L^{\infty}}\\
              &\qquad+\|\bpartial Dv_t\|_{L^2}\|\bpartial Dv\|_{L^4}+\| Dv_t\|_{L^4}\|\bpartial^2 Dv\|_{L^2} \Big)  \Big] \|\bpartial^2\Theta_{t}\|_{L^4} \ dt'\\
&\lesssim \int_{0}^{t} \Big[ \Big( t'P(\sup\limits_{[0,t']}F)+P(M_0)  \Big)^{1/2}
          \Big(\|v\|_{H^3}+ \|v_t\|_{H^2}+\|\bpartial^2 v_t\|_{H^1}\Big) \|v\|_{H^3}
           \Big] \|\bpartial^2 \Theta_{t}\|_{H_0^1} \ dt'\\
&\lesssim \frac{1}{\epsilon}tP(\sup\limits_{t'\in[0,t]}F)+\frac{1}{\epsilon}P(M_0)+\frac{1}{\epsilon}\left(\int_{0}^{t}\|v_t\|_{H^2}^2 +\|\bpartial^2 v_t\|_{H^1}^2\ dt'\right)^2+\epsilon \int_{0}^{t}\|\bpartial^2\Theta_{t}\|_{H_0^1}^2\ dt'.
\end{align*}
Note that here in the estimates of $\mathrm{II}$ and $\mathrm{III}$, \eqref{direct} is applied to substituting $\|v_t\|_{H^1}$ and $\|v\|_{H^3}$.

Note that $\bpartial^2 \Theta_t=0$ on $\Gamma$.
After combining the estimates of $\mathrm{I}, \mathrm{II}, \mathrm{III}, \mathrm{IV}$ 
and choosing small enough $\epsilon$ (independent of $v$ or $\Theta$), 
we have that
\begin{equation}\label{thetatxx}
\begin{aligned}
\int_{0}^{t}\|\bpartial^2 \Theta_t\|_{H_0^1}^2\ dt'
&\lesssim tP(\sup\limits_{t'\in[0,t]}F)+P(M_0)+P \Big(\int_{0}^{t}\|\Theta_{tt}\|_{H_0^1}^2\\
&\qquad \|\bpartial^2 v\|_{H^1}^2+\|v_t\|_{H^2}^2+\|\Theta_t\|_{H^2}^2 \ dt'\Big).
\end{aligned}
\end{equation}

\vspace{0.3cm}

\subsection{Lower order energy estimates}~
\vspace{0.3cm}

\textsl{\textbf{Estimates of $\int_{0}^{t}\|\bpartial v_{t}\|_{H^1}^2\ dt'$:}}~~
The calculation
$\int_{0}^{t}\int_{\Omega}\bpartial\partial_t \Big(\eqref{1}_1\Big)\cdot \bpartial v_{t}\ dx dt'$ yields
$$
\underbrace{\iint \bpartial \partial_t (\rho_0 v_t^i) \cdot \bpartial v_t^i}_{\mathrm{I}} 
-\underbrace{\iint \bpartial \partial_t (a^r_i\frac{R\rho_0\Theta}{J})\bpartial v^i_{t,r}}_{\mathrm{II}}
+\underbrace{\iint \bpartial\partial_t (a^r_j\mathbb{S}^{ij}_{\eta}[v])\bpartial v^i_{t,r}}_{\mathrm{III}}=0.
$$
Here
$$
\mathrm{III}=\iint a^r_j\Big[\mu (A^{k}_j \bpartial v^i_{t,k}+A^k_i \bpartial v^j_{t,k})+\lambda A_l^k \bpartial v^l_{t,k}\delta^i_j\Big]\bpartial v^i_{t,r}+\mathrm{III}',
$$
where $\mathrm{III}'$ denotes all the remaining terms. Then it follows that
\begin{align*}
|\mathrm{III}'|
&\lesssim \int_{0}^{t}\Big(\|\bpartial\partial_t (aA)\|_{L^4}\|Dv\|_{L^{\infty}}
    +\|\bpartial (aA)\|_{L^{\infty}}\|Dv_t\|_{L^2}
    +\|\partial_t (aA)\|_{L^{\infty}}\|\bpartial Dv\|_{L^4}\Big)
    \|\bpartial Dv_t\|_{L^2}\ dt'\\
&\lesssim \int_{0}^{t} \Big( t'P(\sup\limits_{[0,t']}F)+P(M_0)  \Big)^{1/2} \Big(\|v\|_{H^3}+\|v_t\|_{H^1}\Big)\|\bpartial Dv_t\|_{L^2} \ dt'\\
&\lesssim \frac{1}{\epsilon}tP(\sup\limits_{t'\in[0,t]}F)+\frac{1}{\epsilon}P( M_0 )+\epsilon \int_{0}^{t}\|\bpartial Dv_t\|_{L^2}^2\ dt',
\end{align*}
\begin{align*}
\mathrm{III}-\mathrm{III}'&\geq\frac{1}{C}\iint\sum\limits_{i,j}(\bpartial v^i_{t,j}+\bpartial v^j_{t,i})^2 -CtP(\sup\limits_{t'\in[0,t]}F)\\
&\geq \frac{1}{C}\int_{0}^{t}\|\bpartial v_t\|_{H^1}^2\ dt'-C\iint \rho_0 |\bpartial v_t|^2-CtP(\sup\limits_{t'\in[0,t]}F)\\
&\geq \frac{1}{C}\int_{0}^{t}\|\bpartial v_t\|_{H^1}^2\ dt'-CtP(\sup\limits_{t'\in[0,t]}F),\\
|\mathrm{I}|&=\Big|-\iint \rho_0 v^i_{tt} \bpartial^2v^i_t \Big|
\lesssim \int_{0}^{t} \|\rho_0\|_{L^{\infty}}^{1/2} \|\rho_0^{1/2}v_{tt}\|_{L^2} \|\bpartial^2 v_t\|_{L^2} \ dt'\\
&\lesssim \frac{1}{\epsilon}tP(\sup\limits_{t'\in[0,t]}F)+\epsilon \int_{0}^{t}\|\bpartial^2 v_t\|_{L^2}^2\ dt'. 
\end{align*}
Similarly as before,
\begin{align*}
|\mathrm{II}|
&\lesssim\int_{0}^{t} \| \bpartial \partial_t (A\rho_0\Theta)\|_{L^2} \|\bpartial Dv_t\|_{L^2} \ dt' \\
&\lesssim\int_{0}^{t} \Big( t'P(\sup\limits_{[0,t']}F)+P(M_0)  \Big)^{1/2} \Big(\|\Theta_t\|_{H_0^1}+\|\Theta\|_{H^2}\Big)\|\bpartial Dv_t\|_{L^2} \ dt' \\
&\lesssim \frac{1}{\epsilon}tP(\sup\limits_{t'\in[0,t]}F)+\frac{1}{\epsilon} P(M_0) +\epsilon \int_{0}^{t}\|\bpartial Dv_t\|_{L^2}^2\ dt',
\end{align*}

After combining the estimates of $\mathrm{I}, \mathrm{II}$ and $\mathrm{III}$ and choosing small enough $\epsilon$ (independent of $v$), we have that 
\begin{equation}\label{vtx}
\int_{0}^{t}\|\bpartial v_t\|_{H^1}^2\ dt'\lesssim tP(\sup\limits_{t'\in[0,t]}F)+P( M_0).
\end{equation}

\vspace{0.3cm}

\textsl{\textbf{Estimates of $\int_{0}^{t}\|\bpartial \Theta_{t}\|_{H_0^1}^2\ dt'$:}}~~
We follow the calculation 
$\int_{0}^{t}\int_{\Omega}\bpartial\partial_t \Big(\eqref{1}_2\Big)\cdot  \bpartial \Theta_t\ dx dt'$,
\begin{align*}
c_v\underbrace{\iint \bpartial  \Big(\rho_0\Theta_{tt}\Big)\bpartial \Theta_t}_{\mathrm{I}}
+\underbrace{\iint  \bpartial \partial_t \Big(\frac{R\rho_0\Theta}{J} a^r_i v^i_{,r}\Big) \bpartial \Theta_t}_{\mathrm{II}}
-\underbrace{\iint \bpartial \partial_t (\mathbb{S}^{ij}_{\eta}[v] a^r_j v^i_{,r})  \bpartial \Theta_t}_{\mathrm{III}}&\\
+\underbrace{\kappa\iint \bpartial \partial_t(a^r_i A^k_j \delta^{ij}\Theta_{,k}) \bpartial \Theta_{t,r}}_{\mathrm{IV}}=0,
\qquad\qquad\qquad\qquad&
\end{align*}
with
$$
\mathrm{IV}=\kappa \iint a^r_i A^k_j \delta^{ij} \bpartial \Theta_{t,k}
\bpartial \Theta_{t,r}+\mathrm{IV}',
$$
where $\mathrm{IV}'$ denotes all the remaining terms.
Similarly as before,
\begin{align*}
|\mathrm{IV}'|
&\lesssim \frac{1}{\epsilon}tP(\sup\limits_{t'\in[0,t]}F)+\frac{1}{\epsilon} P(M_0) +\epsilon \int_{0}^{t}\|\bpartial D\Theta_t\|_{L^2}^2\ dt',\\
\mathrm{IV}
&\geq \frac{1}{C} \int_{0}^{t} \|\bpartial D\Theta_{t}\|_{L^2}^2 \ dt'-CtP(\sup\limits_{t'\in[0,t]}F),\\
|\mathrm{I}| &\lesssim \frac{1}{\epsilon}tP(\sup\limits_{t'\in[0,t]}F)
           + \epsilon \int_{0}^{t}  \|\bpartial^2 \Theta_t\|_{L^2} ^2\ dt.  
\end{align*}
Moreover,
\begin{align*}
|\mathrm{II}|
&\lesssim \int_{0}^{t} \Big( \|\bpartial\partial_t (A\rho_0 \Theta)\|_{L^2} \| Dv\|_{L^{\infty}}
    +\|\partial_t (A\rho_0 \Theta)\|_{L^4}  \|\bpartial Dv\|_{L^4}\\
   &\qquad
    +\|\bpartial (A\rho_0 \Theta)\|_{L^4} \| Dv_t\|_{L^2}
    +\| (A\rho_0 \Theta)\|_{L^{\infty}}  \|\bpartial Dv_t\|_{L^2}\Big)\|\bpartial \Theta_t\|_{L^4}\\
&\lesssim \int_{0}^{t}  \Big( t'P(\sup\limits_{[0,t']}F)+P(M_0)  \Big)^{1/2}\Big(\|\Theta_t\|_{H_0^1}+\|\Theta\|_{H^2}\Big)\Big(\|v\|_{H^3}+\|v_t\|_{H^1}+\|\bpartial v_t\|_{H^1}\Big) \|\bpartial \Theta \|_{H_0^1}\\
&\lesssim \frac{1}{\epsilon}tP(\sup\limits_{t'\in[0,t]}F)+\frac{1}{\epsilon}P(M_0)+\frac{1}{\epsilon}\left(\int_{0}^{t} \|\bpartial v_t\|_{H^1}^2\ dt'\right)^2+\epsilon \int_{0}^{t}\|\bpartial\Theta_{t}\|_{H_0^1}^2\ dt',\\
|\mathrm{III}|
&\lesssim \int_{0}^{t} \Big[ \|\bpartial\partial_t (aA)\|_{L^4}  \|Dv\|_{L^{\infty}}^2
            + \|\partial_t (aA)\|_{L^{\infty}}\|\bpartial Dv\|_{L^4} \|Dv\|_{L^{\infty}}
           +\|\bpartial (aA)\|_{L^{\infty}} \|Dv_t\|_{L^2} \|Dv\|_{L^{\infty}}\\
            &+ \| (aA)\|_{L^{\infty}} \Big(\|\bpartial Dv_t\|_{L^2} \|Dv\|_{L^{\infty}}+\|Dv_t\|_{L^2} \|\bpartial Dv\|_{L^4} \Big) \Big] \|\bpartial\Theta_{t}\|_{L^4} \ dt'\\
&\lesssim \int_{0}^{t} \Big[ \Big( t'P(\sup\limits_{[0,t']}F)+P(M_0)  \Big)^{1/2}
          \Big(\|v\|_{H^3}+ \|v_t\|_{H^1}+\|\bpartial v_t\|_{H^1}\Big) \|v\|_{H^3}
           \Big] \|\bpartial \Theta_{t}\|_{H_0^1} \ dt'\\
&\lesssim \frac{1}{\epsilon}tP(\sup\limits_{t'\in[0,t]}F)+\frac{1}{\epsilon}P(M_0)+\frac{1}{\epsilon}\left(\int_{0}^{t} \|\bpartial v_t\|_{H^1}^2\ dt'\right)^2+\epsilon \int_{0}^{t}\|\bpartial\Theta_{t}\|_{H_0^1}^2\ dt'.
\end{align*}
Note that here in the estimates of $\mathrm{II}$ and $\mathrm{III}$, \eqref{direct} is applied to substituting $\|v_t\|_{H^1}$ or $\|v\|_{H^2}$ which are multiplied by $\|\bpartial v_t\|_{H^1}$.

Note that $\bpartial\Theta_t=0$ on $\Gamma$.
After combining the estimates of $\mathrm{I}, \mathrm{II}, \mathrm{III}, \mathrm{IV}$ 
and choosing small enough $\epsilon$ (independent of $v$ or $\Theta$), 
we have that
\begin{equation}\label{thetatx}
\begin{aligned}
\int_{0}^{t}\|\bpartial \Theta_t\|_{H_0^1}^2\ dt'
&\lesssim tP(\sup\limits_{t'\in[0,t]}F)+P(M_0)+P \Big(\int_{0}^{t} \|\bpartial v_t\|_{H^1}^2 \ dt'\Big).
\end{aligned}
\end{equation}

\vspace{0.3cm}

\textsl{\textbf{Estimates of $\int_{0}^{t}\|\bpartial^3 v\|_{H^1}^2 \ dt'$:}}~~
Here we calculate
$\int_{0}^{t}\int_{\Omega}\bpartial^3 \Big(\eqref{1}_1\Big)\cdot \bpartial^3 v \ dx dt'$
$$
\underbrace{\iint \bpartial^3 (\rho_0 v^i_t) \bpartial^3 v^i}_{\mathrm{I}} 
-\underbrace{\iint \bpartial^3 (a^r_i\frac{R\rho_0\Theta}{J})\bpartial^3 v^i_{,r}}_{\mathrm{II}}
+\underbrace{\iint \bpartial^3 (a^r_j\mathbb{S}^{ij}_{\eta}[v])\bpartial^3 v^i_{,r}}_{\mathrm{III}}=0,
$$
where
$$
\mathrm{III}=\iint a^r_j\Big[\mu (A^{k}_j \bpartial^3 v^i_{,k}+A^k_i \bpartial^3 v^j_{,k})+\lambda A_l^k \bpartial^3 v^l_{,k}\delta^i_j\Big]\bpartial^3 v^i_{,r}+\mathrm{III}',
$$
where $\mathrm{I}_3'$ denotes all the remaining terms.
\begin{align*}
|\mathrm{III}'|
&\lesssim  \int_{0}^{t} \bigg[ \|\bpartial^3 (aA)\|_{L^2}\|Dv\|_{L^{\infty}}
     + \|\bpartial^2 (aA)\|_{L^4}\|\bpartial Dv\|_{L^4}
     \\
     &\qquad+ \|\bpartial (aA)\|_{L^{\infty}}\|\bpartial^2 Dv\|_{L^2}
       \bigg] \|\bpartial^3 Dv\|_{L^2}\ dt'\\ 
&\lesssim \int_{0}^{t} \Big( t'P(\sup\limits_{[0,t']}F)+P(M_0)  \Big)^{1/2}\|v\|_{H^3} \|\bpartial^3 Dv\|_{L^2}\ dt'\\
&\lesssim  \frac{1}{\epsilon}tP(\sup\limits_{t'\in[0,t]}F)+\frac{1}{\epsilon}P(M_0)+\epsilon \int_{0}^{t}\|\bpartial^3 Dv\|_{L^2}^2\ dt',\\
\mathrm{III}-\mathrm{III}'
&\geq\frac{1}{C}\iint\sum\limits_{i,j}(\bpartial^3 v^i_{,j}+\bpartial^3 v^j_{,i})^2-CtP(\sup\limits_{t'\in[0,t]}F)\\
&\geq \frac{1}{C}\int_{0}^{t}\|\bpartial^3 v\|_{H^1}^2\ dt'-C\iint \rho_0 |\bpartial^3 v|^2-CtP(\sup\limits_{t'\in[0,t]}F)\\
&\geq \frac{1}{C}\int_{0}^{t}\|\bpartial^3 v\|_{H^1}^2\ dt'-CtP(\sup\limits_{t'\in[0,t]}F),\\
|\mathrm{I}|&=\left| -\iint \bpartial^2(\rho_0 v^i_t)\bpartial^4 v^i   \right|\\
&\lesssim  \int_{0}^{t} \Big(\|\bpartial^2 \rho_0\|_{L^2}\|v_t\|_{L^{\infty}}+\|\bpartial \rho_0\|_{L^4}\|\bpartial v_t\|_{L^4}+\|\rho_0\|_{L^{\infty}}\|\bpartial^2 v_t\|_{L^2}\Big)\|\bpartial^4 v\|_{L^2}\ dt'\\
&\lesssim  \int_{0}^{t} (\|\bpartial\rho_0\|_{H^1}+\|\rho_0\|_{L^\infty})\|v_t\|_{H^2}\|\bpartial^4 v\|_{L^2}\ dt'\\
&\lesssim \frac{1}{\epsilon}tP(\sup\limits_{t'\in[0,t]}F)+\frac{1}{\epsilon}P(M_0)+\frac{1}{\epsilon}\left(\int_{0}^{t} \| v_t\|_{H^2}^2\ dt'\right)^2+\epsilon \int_{0}^{t}\|\bpartial^4 v\|_{L^2}^2\ dt',\\
|-\mathrm{II}|&= \iint \sum\limits_{m=0}^3| \bpartial^{m}  A| |\bpartial^{3-m} (  R\rho_0 \Theta)  | |\bpartial^3 D v|\\
&\lesssim \int_{0}^{t}  \bigg[ \|\bpartial ^3 A\|_{L^2} \|\rho_0\|_{L^{\infty}}\|\Theta\|_{L^{\infty}}
    +\|\bpartial^2 A\|_{L^4}
    \Big(\|\bpartial \rho_0\|_{L^4}\|\Theta\|_{L^{\infty}}
    +\| \rho_0\|_{L^{\infty}}\|\bpartial\Theta\|_{L^4}\Big)\\
&\qquad+\|\bpartial A\|_{L^{\infty}}\Big( \|\bpartial^2 \rho_0\|_{L^2}\|\Theta\|_{L^{\infty}}
    + \|\bpartial \rho_0\|_{L^4}\|\bpartial\Theta\|_{L^4}
    +\|\rho_0\|_{L^{\infty}}\|\bpartial^2 \Theta \|_{L^2} \Big)\\
&\qquad+\|A\|_{L^{\infty}}\Big( \|d \bpartial^3 \rho_0\|_{L^2} \|\frac{\Theta}{d}\|_{L^{\infty}}
+\|\bpartial^2 \rho_0\|_{L^2}\|\bpartial\Theta\|_{L^{\infty}}
+ \|\bpartial \rho_0\|_{L^4}\|\bpartial^2\Theta\|_{L^4}\\
\qquad&\qquad+\|\rho_0\|_{L^{\infty}}\|\bpartial^3 \Theta \|_{L^2}\Big)
\bigg]\|\bpartial^3 Dv\|_{L^2} \ dt' \\
&\lesssim   \int_{0}^{t} \Big( t'P(\sup\limits_{[0,t']}F)+P(M_0)  \Big)^{1/2}
\Big( 
\|\bpartial \rho_0\|_{H^1}
+\|\rho_0\|_{L^{\infty}}+\|d \bpartial^3 \rho_0\|_{L^2}\Big)\|\Theta\|_{H^3} \|\bpartial^3 Dv\|_{L^2}\ dt'\\
&\lesssim  \frac{1}{\epsilon}tP(\sup\limits_{t'\in[0,t]}F)+\frac{1}{\epsilon}P(M_0)+\epsilon \int_{0}^{t}\|\bpartial^3 Dv\|_{L^2}^2\ dt'.
\end{align*}

Note that in $\|\frac{\Theta}{d}\|_{L^{\infty}}\lesssim \|D\Theta\|_{L^{\infty}}\lesssim  \|\Theta\|_{H^3}$, L'Hospital rule and boundary condition of $\Theta$ are used.
After combining the estimates of $\mathrm{I}, \mathrm{II}, \mathrm{III}$ and choosing small enough $\epsilon$ (independent of $v$), we have that 
\begin{equation}\label{vxxx}
 \int_{0}^{t}\|\bpartial^3 v\|_{H^1}^2\ dt'\lesssim tP(\sup\limits_{t'\in[0,t]}F)+P(M_0)+P\left(\int_{0}^{t} \| v_t\|_{H^2}^2\ dt'\right).
\end{equation}

\vspace{0.3cm}

\textsl{\textbf{Estimates of $\int_{0}^{t}\|\bpartial^3 \Theta\|_{H_0^1}^2 \ dt'$:}}~~
Here we calculate
$\int_{0}^{t}\int_{\Omega}\bpartial^3 \Big(\eqref{1}_2\Big)\cdot  \bpartial^3 \Theta\ dx dt'$.
\begin{align*}
c_v\underbrace{\iint \bpartial ^3 \Big(\rho_0\Theta_{t}\Big)\bpartial^3 \Theta}_{\mathrm{I}}
+\underbrace{\iint  \bpartial^3  \Big(\frac{R\rho_0\Theta}{J} a^r_i v^i_{,r}\Big) \bpartial^3 \Theta}_{\mathrm{II}}
-\underbrace{\iint \bpartial^3 (\mathbb{S}^{ij}_{\eta}[v] a^r_j v^i_{,r})  \bpartial^3 \Theta}_{\mathrm{III}}&\\
+\underbrace{\kappa\iint \bpartial^3(a^r_i A^k_j \delta^{ij}\Theta_{,k}) \bpartial^3 \Theta_{,r}}_{\mathrm{IV}}=0,
\qquad\qquad\qquad\qquad&
\end{align*}
with
$$
\mathrm{IV}=\kappa \iint a^r_i A^k_j \delta^{ij} \bpartial^3 \Theta_{,k}
\bpartial^3 \Theta_{,r}+\mathrm{IV}',
$$
where $\mathrm{IV}'$ denotes all the remaining terms.
Similarly as before,
\begin{align*}
|\mathrm{IV}'|
&\lesssim  \frac{1}{\epsilon}tP(\sup\limits_{t'\in[0,t]}F)+\frac{1}{\epsilon}P( M_0) +\epsilon \int_{0}^{t}\|\bpartial^3 D\Theta\|_{L^2}^2\ dt',\\
\mathrm{IV}-\mathrm{IV}'
&\geq \frac{1}{C} \int_{0}^{t} \|\bpartial^3 D\Theta\|_{L^2}^2 \ dt'-CtP(\sup\limits_{t'\in[0,t]}F),\\
|\mathrm{I}| &\lesssim  \frac{1}{\epsilon}tP(\sup\limits_{t'\in[0,t]}F)+\frac{1}{\epsilon}P(M_0)+\frac{1}{\epsilon}\left(\int_{0}^{t} \| \Theta_t\|_{H^2}^2\ dt'\right)^2+\epsilon \int_{0}^{t}\|\bpartial^4 \Theta\|_{L^2}^2\ dt', 
\end{align*}
\begin{align*}
|\mathrm{II}|
&\lesssim \int_{0}^{t} \Big( \|\bpartial^3 (A\rho_0 \Theta)\|_{L^2} \| Dv\|_{L^{\infty}}
    +\|\bpartial^2 (A\rho_0 \Theta)\|_{L^2}  \|\bpartial Dv\|_{L^4}\\
   &\qquad
    +\|\bpartial (A\rho_0 \Theta)\|_{L^4} \| \bpartial^2 Dv\|_{L^2}
    +\| (A\rho_0 \Theta)\|_{L^{\infty}}  \|\bpartial^3 Dv\|_{L^2}\Big)\|\bpartial^3 \Theta\|_{L^4}\\
&\lesssim \int_{0}^{t}  \Big( t'P(\sup\limits_{[0,t']}F)+P(M_0)  \Big)^{1/2}
   \Big( \|\bpartial \rho_0\|_{H^1}
+\|\rho_0\|_{L^{\infty}}+\|d \bpartial^3 \rho_0\|_{L^2}\Big)\|\Theta\|_{H^3}\\
&\qquad \cdot\Big(\|v\|_{H^3}+\|\bpartial^3 v\|_{H^1}\Big) \|\bpartial^3 \Theta \|_{H_0^1}\\
&\lesssim  \frac{1}{\epsilon}tP(\sup\limits_{t'\in[0,t]}F)+\frac{1}{\epsilon}P(M_0)+\frac{1}{\epsilon}\left(\int_{0}^{t} \|\bpartial^3 v\|_{H^1}^2\ dt'\right)^2+\epsilon \int_{0}^{t}\|\bpartial^3 \Theta \|_{H_0^1}^2\ dt',
\end{align*}
\begin{align*}
|\mathrm{III}|
&\lesssim \int_{0}^{t} \Big[ \|\bpartial^3 (aA)\|_{L^2}  \|Dv\|_{L^{\infty}}^2
            + \|\bpartial^2 (aA)\|_{L^4}\|\bpartial Dv\|_{L^4} \|Dv\|_{L^{\infty}}
            +\|\bpartial (aA)\|_{L^{\infty}} \Big(\|\bpartial^2 Dv\|_{L^2} \|Dv\|_{L^{\infty}}\\
            &\qquad+\|\bpartial Dv\|_{L^4}^2\Big)
            + \| (aA)\|_{L^{\infty}} \Big(\|\bpartial^3 Dv\|_{L^2} \|Dv\|_{L^{\infty}}+\|\bpartial^2 Dv\|_{L^2} \|\bpartial Dv\|_{L^4} \Big) \Big] \|\bpartial^3\Theta\|_{L^4} \ dt'\\
&\lesssim \int_{0}^{t} \Big[ \Big( t'P(\sup\limits_{[0,t']}F)+P(M_0)  \Big)^{1/2}
          \Big(\|v\|_{H^3}+ \|\bpartial^3 v\|_{H^1}\Big) \|v\|_{H^3}
           \Big] \|\bpartial^3 \Theta\|_{H_0^1} \ dt'\\
&\lesssim  \frac{1}{\epsilon}tP(\sup\limits_{t'\in[0,t]}F)+\frac{1}{\epsilon}P(M_0)+\frac{1}{\epsilon}\left(\int_{0}^{t} \|\bpartial^3 v\|_{H^1}^2\ dt'\right)^2+\epsilon \int_{0}^{t}\|\bpartial^3\Theta\|_{H_0^1}^2\ dt'.
\end{align*}
Note that here in the estimates of $\mathrm{II}$ and $\mathrm{III}$, \eqref{direct} is applied to substituting $\|v_t\|_{H^1}$ or $\|v\|_{H^3}$.

Note that $\bpartial^3 \Theta=0$ on $\Gamma$.
After combining the estimates of $\mathrm{I}, \mathrm{II}, \mathrm{III}, \mathrm{IV}$ 
and choosing small enough $\epsilon$ (independent of $v$ or $\Theta$), 
we have that
\begin{equation}\label{thetaxxx}
\begin{aligned}
\int_{0}^{t}\|\bpartial^3 \Theta\|_{H_0^1}^2\ dt'
&\lesssim tP(\sup\limits_{t'\in[0,t]}F)+P(M_0)+P \Big(\int_{0}^{t} \|\Theta_t\|_{H^2}^2+\|\bpartial^3 v\|_{H^1}^2 \ dt'\Big).
\end{aligned}
\end{equation}

\vspace{0.3cm}

\subsection{Elliptic estimates}~

\vspace{0.3cm}
\textsl{\textbf{Estimates of $\int_{0}^{t}\| v_t\|_{H^2}^2\ dt'$:}}~~
After applying a temporal derivative to $\eqref{1}_1$, one has
$$
\rho_0v^i_{tt}+\partial_t \left(a^r_i \frac{R\rho_0\Theta}{J}\right)_{,r}=\partial_t \Big(a^r_j \mathbb{S}^{ij}_{\eta}[v]\Big)_{,r}.
$$
Therefore, by the definition of $\mathbb{S}^{ij}_{\eta}[v]$ we have
\begin{align*}
&\int_{0}^{t}\sum\limits_{i=1}^3\left\| a^r_j\Big[\mu (A^{k}_j  v^i_{t,rk}+A^k_i  v^j_{t,rk})+\lambda A_l^k  v^l_{t,rk}\delta^i_j\Big]   \right\|_{L^2}^2 \ dt'\\
\lesssim & \int_{0}^{t} \|\rho_0 v_{tt}\|_{L^2}^2
   +\| \partial_t D(A\rho_0\Theta)\|_{L^2}^2
   +\| \partial_t D(a A \cdot Dv)- (aA) D^2v_t\|_{L^2}^2\ dt'\\
:=&\mathrm{I}+\mathrm{II}+\mathrm{III},
\end{align*}
It is straightforward that
\begin{align*}
\mathrm{I}&\lesssim tP(\sup\limits_{t'\in[0,t]}F),\\
\mathrm{II}&\lesssim \int_{0}^{t} \bigg[\|\partial_t D A\|_{L^4} \|\rho_0\|_{L^{\infty}} \|\Theta\|_{ L^{\infty}}
 +\|\partial_t A\|_{ L^{\infty}}\Big(\|D\rho_0\|_{ L^3} \|\Theta\|_{ L^{\infty}}+\|\rho_0\|_{L^{\infty}} \|D\Theta\|_{ L^{\infty}}\Big)\\
&\qquad  +\|DA\|_{L^4} \|\rho_0\|_{L^{\infty}} \| \Theta_t\|_{ L^4}+ \|A\|_{L^{\infty}}\Big( \| D\rho_0\|_{L^3} \|\Theta_t\|_{ L^6}+\|\rho_0\|_{ L^{\infty}} \|D\Theta_t\|_{ L^2}\Big)\bigg]^2\ dt'\\
&\lesssim \int_{0}^{t} \Big( t'P(\sup\limits_{[0,t']}F)+P(M_0)  \Big)\Big(\|\rho_0\|_{L^{\infty}}+\|D\rho_0\|_{L^3}\Big)^2
\Big(\|\Theta\|_{H^3}+\|\Theta_t\|_{H_0^1}\Big)^2 \ dt'\\
&\lesssim tP(\sup\limits_{t'\in[0,t]}F),
\end{align*}
\begin{align*}
\mathrm{III}&\lesssim \int_{0}^{t}  \Big(\|\partial_t D(aA)\|_{L^4}\|Dv\|_{L^{\infty}}
+\|\partial_t (aA)\|_{L^{\infty}}\|D^2 v\|_{L^4}
+\|D (aA)\|_{L^4} \| Dv_t \|_{L^4}\Big)^2 \ dt'\\
&\lesssim \int_{0}^{t} \Big( t'P(\sup\limits_{[0,t']}F)+P(M_0)  \Big)
\Big(\|v\|_{H^3}+\|Dv_t\|_{L^2}^{1/4}\|D^2v_t\|_{L^2}^{3/4}+\|Dv_t\|_{L^2}\Big)^2 \ dt'\\
&\lesssim  \frac{1}{\epsilon}tP(\sup\limits_{t'\in[0,t]}F)+\frac{1}{\epsilon}P(M_0)+\epsilon \int_{0}^{t}\| v_t \|_{H^2}^2\ dt'.
\end{align*}
Here we have used the embedding inequality $\|Dv_t\|_{L^4}\lesssim \|Dv_t\|_{L^2}^{1/4}\|D^2v_t\|_{L^2}^{3/4}+\|Dv_t\|_{L^2}$.
On the other hand
\begin{align*}
&\int_{0}^{t}\sum\limits_{i=1}^3\left\| a^r_j\Big[\mu (A^{k}_j  v^i_{t,rk}+A^k_i  v^j_{t,rk})+\lambda A_l^k  v^l_{t,rk}\delta^i_j\Big]   \right\|_{L^2}^2 \ dt'\\
\geq &\int_{0}^{t}\sum\limits_{i=1}^3\left\| a^3_j\Big[\mu (A^3_j  v^i_{t,33}+A^3_i  v^j_{t,33})+\lambda A_l^3  v^l_{t,33}\delta^i_j\Big]   \right\|_{L^2}^2 \ dt'-C\int_{0}^{t} \|\bpartial v_t\|_{H^1}^2 \ dt'\\
\geq & \int_{0}^{t}\sum\limits_{i=1}^3\left\| \delta^3_j\Big[\mu (\delta^3_j  v^i_{t,33}+\delta^3_i  v^j_{t,33})+\lambda \delta_l^3  v^l_{t,33}\delta^i_j\Big]   \right\|_{L^2}^2 \ dt'\\
&\qquad-C\int_{0}^{t} \|A-\delta\|_{L^{\infty}}^2\|v\|_{H^2}^2-C\int_{0}^{t} \|\bpartial v_t\|_{H^1}^2 \ dt'\\
\geq & \frac{1}{C}\int_{0}^{t}\|\partial_3^2 v_t\|_{L^2}^2 \ dt'-CtP(\sup\limits_{t'\in[0,t]}F)-C\int_{0}^{t} \|\bpartial v_t\|_{H^1}^2 \ dt'.
\end{align*}
So it follows from the estimates of $\mathrm{I},\mathrm{II},\mathrm{III}$ that
\begin{equation}\label{dvtx}
\begin{aligned}
\int_{0}^{t}\| v_t\|_{H^2}^2 \ dt'\lesssim tP(\sup\limits_{t'\in[0,t]}F)+P(M_0)+\int_{0}^{t} \|\bpartial v_t\|_{H^1}^2 \ dt'.
\end{aligned}
\end{equation}

\vspace{0.3cm}

\textsl{\textbf{Estimates of $\int_{0}^{t}\| \Theta_t\|_{H^2}^2\ dt'$:}}~~
Similarly, after applying a temporal derivative to $\eqref{1}_2$, it holds
\begin{align*}
c_v\rho_0 \Theta_{tt}+\partial_t \left( \frac{R\rho_0\Theta}{J}a^r_iv^i_{,r}\right)=\partial_t(\mathbb{S}_{\eta}^{ij}[v]a^r_jv^i_{,r})+\kappa \partial_t (a^r_i A^k_j \delta^{ij}\Theta_{,k})_{,r},
\end{align*}
Then it follows
\begin{align*}
\int_{0}^{t}\sum\limits_{i=1}^3\left\|a^r_i A^k_j \delta^{ij}\Theta_{t,rk} \right\|_{L^2}^2 \ dt'
&\lesssim \int_{0}^{t} \|\rho_0 \Theta_{tt}\|_{L^2}^2
   +\| \partial_t (A\rho_0\Theta\cdot Dv)\|_{L^2}^2
   +\| \partial_t (a A \cdot |Dv|^2)\|_{L^2}^2\\
& +\|\partial_t D(aA\cdot D\Theta)-(aA) D^2 \Theta_t\|_{L^2}^2\ dt'
:=\mathrm{I}+\mathrm{II}+\mathrm{III} +\mathrm{IV}.
\end{align*}
Straightforward calculation shows that
\begin{align*}
\mathrm{I}&\lesssim tP(\sup\limits_{t'\in[0,t]}F),\\
\mathrm{II}&\lesssim \int_{0}^{t} \Big(\|\partial_t(A\rho_0\Theta)\|_{L^2}\|Dv\|_{L^{\infty}}+\|A\rho_0\Theta\|_{L^{\infty}}\|Dv_t\|_{L^2}\Big)^2\ dt' \\
&\lesssim \int_{0}^{t} \Big( t'P(\sup\limits_{[0,t']}F)+P(M_0)  \Big)\|\rho_0\|_{L^{\infty}}^2
\Big(\|\Theta\|_{H^3}+\|\Theta_t\|_{H_0^1}\Big)^2
\Big(\|v\|_{H^3}+\|v_t\|_{H^1}\Big)^2 \ dt'\\
&\lesssim tP(\sup\limits_{t'\in[0,t]}F)+P(M_0),\\
\mathrm{III}&\lesssim \int_{0}^{t}  \Big(\|\partial_t (aA)\|_{L^{\infty}}\|Dv\|_{L^{\infty}}^2
+\|(aA)\|_{L^{\infty}}\|D v_t\|_{L^2}\|Dv\|_{L^{\infty}}
\Big)^2 \ dt'\\
&\lesssim \int_{0}^{t} \Big( t'P(\sup\limits_{[0,t']}F)+P(M_0)  \Big)
\Big(\|v\|_{H^3}^2+\|v_t\|_{H^1}\|v\|_{H^3}\Big)^2 \ dt'\\
&\lesssim tP(\sup\limits_{t'\in[0,t]}F)+P(M_0).\\
\mathrm{IV}
&\lesssim \frac{1}{\epsilon}tP(\sup\limits_{t'\in[0,t]}F)+\frac{1}{\epsilon}P(M_0)+\epsilon \int_{0}^{t}\| \Theta_t \|_{H^2}^2\ dt',\\
\end{align*}
\begin{align*}
\int_{0}^{t}\sum\limits_{i=1}^3\left\|a^r_iA^k_i\Theta_{t,rk} \right\|_{L^2}^2 \ dt'
\geq & \frac{1}{C}\int_{0}^{t}\|\partial_3^2 \Theta_t\|_{L^2}^2 \ dt'-CtP(\sup\limits_{t'\in[0,t]}F)-C\int_{0}^{t} \|\bpartial \Theta_t\|_{H_0^1}^2 \ dt'.
\end{align*}
So it follows from the estimates of $\mathrm{I},\mathrm{II},\mathrm{III}$ that
\begin{equation}\label{dthetatx}
\begin{aligned}
\int_{0}^{t}\| \Theta_t\|_{H^2}^2 \ dt'\lesssim tP(\sup\limits_{t'\in[0,t]}F)+P(M_0)+\int_{0}^{t} \|\bpartial \Theta_t\|_{H_0^1}^2 \ dt'.
\end{aligned}
\end{equation}

\vspace{0.3cm}
\textsl{\textbf{Estimates of $\int_{0}^{t}\| v_t\|_{H^3}^2\ dt'$:}}~~
After applying one spatial and temporal derivative to $\eqref{1}_1$, that is $\partial_t D \Big(\eqref{1}_1\Big)$,
$$
D(\rho_0v^i_{tt})+\partial_t D \left(a^r_i \frac{R\rho_0\Theta}{J}\right)_{,r}=\partial_t D \Big(a^r_j \mathbb{S}^{ij}_{\eta}[v]\Big)_{,r}.
$$
Then 
\begin{align*}
&\int_{0}^{t}\sum\limits_{i=1}^3\left\| a^r_j\Big[\mu (A^{k}_j  D v^i_{t,rk}+A^k_i D v^j_{t,rk})+\lambda A_l^k D v^l_{t,rk}\delta^i_j\Big]   \right\|_{L^2}^2 \ dt'\\
\lesssim & \int_{0}^{t} \|D(\rho_0 v_{tt})\|_{L^2}^2
 +\| \partial_t D^2(A\rho_0\Theta)\|_{L^2}^2
 +\|\partial_t D^2(a A \cdot Dv)- (aA) D^3v_t\|_{L^2}^2\ dt'\\
:=&\mathrm{I}+\mathrm{II}+\mathrm{III},
\end{align*}
where
\begin{align*}
\mathrm{I}&\lesssim \int_{0}^{t} (\|D\rho_0\|_{L^3}\|v_{tt}\|_{L^6}+\|\rho_0\|_{L^{\infty}}\|Dv_{tt}\|_{L^2})^2\ dt' \lesssim  \int_{0}^{t} \|v_{tt}\|_{H^1}^2\ dt',\\
\mathrm{II}&\lesssim \int_{0}^{t} \bigg[\|\partial_t D^2 A\|_{L^2} \|\rho_0\|_{L^{\infty}} \|\Theta\|_{ L^{\infty}}
 +\|\partial_t D A\|_{L^6}\Big(\|D\rho_0\|_{ L^3} \|\Theta\|_{L^{\infty}}+\|\rho_0\|_{L^{\infty}} \|D\Theta\|_{ L^{\infty}}\Big)\\
 &\qquad +\|\partial_t  A\|_{ L^{\infty}}\Big(\|d (D^2\rho_0)\|_{L^2} \|\frac{\Theta}{d}\|_{ L^{\infty}}+\|D\rho_0\|_{L^3} \|D\Theta\|_{L^6}+\|\rho_0\|_{L^{\infty}}\|D^2\Theta\|_{L^2}\Big)\\
&\qquad  +\|D^2A\|_{L^2} \|\rho_0\|_{L^{\infty}} \| \Theta_t\|_{ L^{\infty}}+ \|DA\|_{L^6}\Big( \| D\rho_0\|_{L^3} \|\Theta_t\|_{ L^{\infty}}+\|\rho_0\|_{ L^{\infty}} \|D\Theta_t\|_{ L^4}\Big)\\
&\qquad +\|  A\|_{ L^{\infty}}\Big(\|d (D^2\rho_0)\|_{L^2} \|\frac{\Theta_t}{d}\|_{ L^{\infty}}+\|D\rho_0\|_{L^3} \|D\Theta_t\|_{L^6}+\|\rho_0\|_{L^{\infty}}\|D^2\Theta_t\|_{L^2}\Big)\bigg]^2\ dt'\\
&\lesssim \int_{0}^{t} \Big( t'P(\sup\limits_{[0,t']}F)+P(M_0)  \Big)\Big(\|\rho_0\|_{L^{\infty}}+\|D\rho_0\|_{L^3}+\|d(D^2 \rho_0)\|_{L^2}\Big)^2\\
&\qquad\cdot\Big(\|\Theta\|_{H^3}+\|\Theta_t\|_{H^2}+\|D\Theta_t\|_{L^2}^{1/4}\|D^3\Theta_t\|_{L^2}^{3/4}\Big)^2 \ dt'\\
&\lesssim  \frac{1}{\epsilon}tP(\sup\limits_{t'\in[0,t]}F)+\frac{1}{\epsilon}P(M_0)+\frac{1}{\epsilon}P\left(\int_{0}^{t}\|\Theta_t\|_{H^2}^2 \ dt'\right)+\epsilon \int_{0}^{t}\|D^3\Theta_t\|_{L^2}^2 \ dt'.
\end{align*}
Note that $\Theta, \Theta_t=0$ on the boundary, and thus $\|\frac{\Theta}{d}, \frac{\Theta_t}{d}\|_{ L^{\infty}}\lesssim \|D\Theta, D\Theta_t\|_{ L^{\infty}}$. Here we have used the embedding inequalities $\|\cdot\|_{L^6}\lesssim \|\cdot\|_{H^1}$ and $\| D\Theta_t\|_{ L^{\infty}} \lesssim  \|D\Theta_t\|_{L^2}^{1/4}\|D^3\Theta_t\|_{L^2}^{3/4} +\|D\Theta_t\|_{L^2}$.
\begin{align*}
\mathrm{III}&\lesssim   
\int_{0}^{t}  \Big(\|\partial_t D^2(aA)\|_{L^2}\|Dv\|_{L^{\infty}}
+\|\partial_t D (aA)\|_{L^4}\|D^2 v\|_{L^4}
+\|\partial_t (aA)\|_{L^{\infty}}\|D^3 v\|_{L^2}\\
&\qquad
+\|D^2 (aA)\|_{L^2} \| Dv_t \|_{L^{\infty}}
+\|D(aA)\|_{L^4} \|D^2v_t\|_{L^4}\Big)^2 \ dt'\\
&\lesssim \int_{0}^{t} \Big( t'P(\sup\limits_{[0,t']}F)+P(M_0)  \Big)
\Big(\|v\|_{H^3}+\|v_t\|_{H^2}+\|Dv_t\|_{L^2}^{1/4}\|D^3v_t\|_{L^2}^{3/4}+\|D^2v_t\|_{L^2}^{1/4}\|D^3v_t\|_{L^2}^{3/4}\Big)^2 \ dt'\\
&\lesssim  \frac{1}{\epsilon}tP(\sup\limits_{t'\in[0,t]}F)+\frac{1}{\epsilon}P(M_0)+\frac{1}{\epsilon}P\left(\int_{0}^{t}\|v_t\|_{H^2}^2 \ dt'\right)+\epsilon \int_{0}^{t}\|D^3v_t\|_{L^2}^2 \ dt',
\end{align*}
Here we have used the embedding inequalities
$\|Dv_t\|_{L^{\infty}}\lesssim \|Dv_t\|_{L^2}^{1/4}\|D^3v_t\|_{L^2}^{3/4}+\|Dv_t\|_{L^2}$ and $\|D^2v_t\|_{L^4}\lesssim \|D^2v_t\|_{L^2}^{1/4}\|D^3v_t\|_{L^2}^{3/4}+\|D^2v_t\|_{L^2}$.
\begin{align*}
&\int_{0}^{t}\sum\limits_{i=1}^3\left\| a^r_j\Big[\mu (A^{k}_j  Dv^i_{t,rk}+A^k_i Dv^j_{t,rk})+\lambda A_l^k  Dv^l_{t,rk}\delta^i_j\Big]   \right\|_{L^2}^2 \ dt'\\
\geq & \frac{1}{C}\int_{0}^{t}\|\partial_3^3 v_t\|_{L^2}^2 \ dt'-CtP(\sup\limits_{t'\in[0,t]}F)-C\int_{0}^{t} \|\bpartial v_t\|_{H^2}^2 \ dt'.
\end{align*}
So it follows from the estimates of $\mathrm{I},\mathrm{II},\mathrm{III}$ that
\begin{equation}
\label{dvtxx2}
\begin{aligned}
\int_{0}^{t}\| v_t\|_{H^3}^2 \ dt' &\lesssim  \frac{1}{\epsilon}tP(\sup\limits_{t'\in[0,t]}F)+\frac{1}{\epsilon}P(M_0)+\frac{1}{\epsilon}P\left(\int_{0}^{t}\|v_{tt}\|_{H^1}^2+\|v_t\|_{H^2}^2+\|\Theta_t\|_{H^2}^2 +\|\bpartial v_t\|_{H^2}^2\ dt'\right)\\
&\qquad+\epsilon \int_{0}^{t}\|D^3\Theta_t\|_{L^2}^2 \ dt',
\end{aligned}
\end{equation}

\vspace{0.3cm}
Using the method to analysis $\bpartial\partial_t  \Big(\eqref{1}_1\Big)$, we have that
\begin{equation}
\label{dvtxx1}
\begin{aligned}
\int_{0}^{t}\| \bpartial v_t\|_{H^2}^2 \ dt' &\lesssim tP(\sup\limits_{t'\in[0,t]}F)+\frac{1}{\epsilon}P(M_0)+P\left(\int_{0}^{t}\|v_{tt}\|_{H^1}^2+\|v_t\|_{H^2}^2+\|\Theta_t\|_{H^2}^2 +\|\bpartial^2 v_t\|_{H^1}^2\ dt'\right)
.
\end{aligned}
\end{equation}
Here $\|D^3\Theta_t\|_{L^2}^2$ does not appear because instead of using the estimates $\|A\|_{L^{\infty}} \|d (D^2 \rho_0)\|_{L^2} \|\frac{\Theta_t}{d}\|_{L^{\infty}}$ in $\mathrm{II}$, we use $\|A\|_{L^{\infty}} \| \bpartial D\rho_0\|_{L^2} \|\Theta_t\|_{L^{\infty}}\lesssim \|\bpartial D\rho_0\|_{L^2}\|\Theta_t\|_{H^2}$ in the corresponding terms.

\vspace{0.3cm}
\textsl{\textbf{Estimates of $\int_{0}^{t}\| \Theta_t\|_{H^3}^2\ dt'$:}}~
From $\eqref{1}_2$, we have
\begin{align*}
c_vD(\rho_0 \Theta_{tt})+\partial_t D \left( \frac{R\rho_0\Theta}{J}a^r_iv^i_{,r}\right)=\partial_t D(\mathbb{S}_{\eta}^{ij}[v]a^r_jv^i_{,r})+\kappa \partial_t D (a^r_i A^k_j \delta^{ij}\Theta_{,k})_{,r},
\end{align*}
Then
\begin{align*}
\int_{0}^{t}\sum\limits_{i=1}^3\left\|a^r_i A^k_j \delta^{ij} D\Theta_{t,rk} \right\|_{L^2}^2 \ dt'
\lesssim & \int_{0}^{t} \|D(\rho_0 \Theta_{tt})\|_{L^2}^2
   +\| \partial_t D(A\rho_0\Theta\cdot Dv)\|_{L^2}^2\\
   +\| \partial_t D(a A \cdot |Dv|^2)\|_{L^2}^2&
   +\|\partial_t D^2(aA\cdot D\Theta)-(aA) D^3 \Theta_t\|_{L^2}^2\ dt'\\
:=&\mathrm{I}+\mathrm{II}+\mathrm{III} +\mathrm{IV},
\end{align*}
where
\begin{align*}
\mathrm{I}&\lesssim  \int_{0}^{t} \|\Theta_{tt}\|_{H_0^1}^2\ dt',\\
\mathrm{II}&\lesssim \int_{0}^{t} \Big[ 
    \|\partial_t D(A\rho_0\Theta)\|_{L^2} \|Dv\|_{L^{\infty}}
    +\|\partial_t (A\rho_0\Theta)\|_{L^4}\|D^2v\|_{L^4}\\
     &\qquad+ \| D(A\rho_0\Theta)\|_{L^3} \|Dv_t\|_{L^6}
     +\| (A\rho_0\Theta)\|_{L^{\infty}}\|D^2v_t\|_{L^2} \Big]^2 \ dt'\\
&\lesssim \int_{0}^{t} \Big( t'P(\sup\limits_{[0,t']}F)+P(M_0)  \Big)
     \Big(\|\rho_0\|_{L^{\infty}}+\|D\rho_0\|_{L^3}\Big)^2\\
&\qquad\cdot\Big((\|\Theta\|_{H^3}+\|\Theta_t\|_{H^2})\|v\|_{H^3}
      +\|\Theta\|_{H^3}\|v_t\|_{H^2}\Big)^2\ dt'\\
&\lesssim tP(\sup\limits_{t'\in[0,t]}F)+P(M_0)
     +P\left(\int_{0}^{t}\|v_t\|_{H^2}^2+\|\Theta_t\|_{H^2}^2 \ dt'\right),
\end{align*}
\begin{align*}
\mathrm{III}&\lesssim \int_{0}^{t}  \Big[
      \|\partial_t D(aA)\|_{L^4} \|Dv\|_{L^{\infty}}^2
     +\| D (aA)\|_{L^4}  \|D v_t\|_{L^4}\|Dv\|_{L^{\infty}}
     + \| \partial_t (aA)\|_{L^{\infty}}  \|D^2 v\|_{L^2}\|Dv\|_{L^{\infty}}\\
         &\qquad
     +\|(aA)\|_{L^{\infty}}\Big( \|D^2 v_t\|_{L^2}\|Dv\|_{L^{\infty}}+\|D v_t\|_{L^4}\|D^2v\|_{L^4}\Big)\Big]^2 \ dt'\\
&\lesssim  \int_{0}^{t} \Big( t'P(\sup\limits_{[0,t']}F)+P(M_0)  \Big)
      \Big(\|v\|_{H^3}+\|v_t\|_{H^2}\Big)^2 \|v\|_{H^3}^2 \ dt'\\
&\lesssim tP(\sup\limits_{t'\in[0,t]}F)+P(M_0)
     +P\left(\int_{0}^{t}\|v_t\|_{H^2}^2 \ dt'\right),\\
\mathrm{IV}
&\lesssim  \frac{1}{\epsilon}tP(\sup\limits_{t'\in[0,t]}F)+\frac{1}{\epsilon}P(M_0)+\frac{1}{\epsilon}P\left(\int_{0}^{t}\|\Theta_t\|_{H^2}^2 \ dt'\right)+\epsilon \int_{0}^{t}\|D^3\Theta_t\|_{L^2}^2 \ dt'.
\end{align*}
On the other hand, 
\begin{align*}
\int_{0}^{t}\sum\limits_{i=1}^3\left\|a^r_iA^k_i\Theta_{t,rk} \right\|_{L^2}^2 \ dt'
\geq  \frac{1}{C}\int_{0}^{t}\|\partial_3^3 \Theta_t\|_{L^2}^2 \ dt'-CtP(\sup\limits_{t'\in[0,t]}F)-C\int_{0}^{t} \|\bpartial \Theta_t\|_{H^2}^2 \ dt'.
\end{align*}
So it follows from the estimates of $\mathrm{I},\mathrm{II},\mathrm{III}$ that
\begin{equation}
\label{dthetatxx2}
\begin{aligned}
\int_{0}^{t}\| \Theta_t\|_{H^3}^2 \ dt' &\lesssim tP(\sup\limits_{t'\in[0,t]}F)+P(M_0)+P\left(\int_{0}^{t}\|\Theta_{tt}\|_{H_0^1}^2+\|v_t\|_{H^2}^2+\|\Theta_t\|_{H^2}^2 +\|\bpartial \Theta_t\|_{H^2}^2\ dt'\right).
\end{aligned}
\end{equation}
Using the method to analysis $\bpartial\partial_t  \Big(\eqref{1}_2\Big)$, we have that
\begin{equation}
\label{dthetatxx1}
\begin{aligned}
\int_{0}^{t}\| \bpartial \Theta_t\|_{H^2}^2 \ dt' &\lesssim tP(\sup\limits_{t'\in[0,t]}F)+\frac{1}{\epsilon}P(M_0)+P\left(\int_{0}^{t}\|\Theta_{tt}\|_{H_0^1}^2+\|v_t\|_{H^2}^2+\|\Theta_t\|_{H^2}^2 +\|\bpartial^2 \Theta_t\|_{H_0^1}^2\ dt' \right).
\end{aligned}
\end{equation}

\vspace{0.3cm}

\textsl{\textbf{Estimates of $\int_{0}^{t}\| \bpartial v\|_{H^3}^2\ dt'$:}}~
$\bpartial D \Big(\eqref{1}_1\Big)$ will yield
$$
\bpartial D(\rho_0v^i_{t})+\bpartial D \left(a^r_i \frac{R\rho_0\Theta}{J}\right)_{,r}=\bpartial D \Big(a^r_j \mathbb{S}^{ij}_{\eta}[v]\Big)_{,r} ~.
$$
We have
\begin{align*}
&\int_{0}^{t}\sum\limits_{i=1}^3\left\| a^r_j\Big[\mu (A^{k}_j  \bpartial D v^i_{,rk}+A^k_i\bpartial D v^j_{,rk})+\lambda A_l^k \bpartial D v^l_{,rk}\delta^i_j\Big]   \right\|_{L^2}^2 \ dt'\\
\lesssim & \int_{0}^{t} \|\bpartial D(\rho_0 v_{t})\|_{L^2}^2
 +\| \bpartial D^2(A\rho_0\Theta)\|_{L^2}^2
 +\|\bpartial D^2(a A \cdot Dv)- (aA) \bpartial D^3v\|_{L^2}^2\ dt'\\
:=&\mathrm{I}+\mathrm{II}+\mathrm{III} .
\end{align*}
Here
\begin{align*}
\mathrm{I}&\lesssim \int_{0}^{t} (\|\bpartial D\rho_0\|_{L^2}\|v_{t}\|_{L^{\infty}}
   +\|D\rho_0\|_{L^3}\|D v_t\|_{6}
   +\|\rho_0\|_{L^{\infty}}\|\bpartial Dv_{t}\|_{L^2})^2\ dt' 
   \lesssim  \int_{0}^{t} \|v_{t}\|_{H^2}^2\ dt',
\end{align*}
\begin{align*}
\mathrm{II}&\lesssim \int_{0}^{t} \bigg[\|\bpartial D^2 A\|_{L^2} \|\rho_0\|_{L^{\infty}} \|\Theta\|_{ L^{\infty}}
  +\|\bpartial D A\|_{L^6}\Big(\|D\rho_0\|_{ L^3} \|\Theta\|_{L^{\infty}}+\|\rho_0\|_{L^{\infty}} \|D\Theta\|_{ L^{\infty}}\Big)\\
  &\qquad +\|\bpartial  A\|_{ L^{\infty}}\Big(\|d (D^2\rho_0)\|_{L^2} \|\frac{\Theta}{d}\|_{ L^{\infty}}+\|D\rho_0\|_{L^3} \|D\Theta\|_{L^6}+\|\rho_0\|_{L^{\infty}}\|D^2\Theta\|_{L^2}\Big)  
  +\|D^2A\|_{L^2} \\
  &\qquad\cdot\Big(\|\rho_0\|_{L^{\infty}} \| \bpartial \Theta\|_{ L^{\infty}}+\|\bpartial\rho_0\|_{L^4}\|\Theta\|_{L^{\infty}}\Big)
  + \|DA\|_{L^6}\Big( \| d (\bpartial D\rho_0)\|_{L^4} \|\frac{\Theta}{d}\|_{ L^{\infty}}+\|D\rho_0\|_{L^3}\|D\Theta\|_{L^{\infty}}\\
&\qquad+ \|\rho_0\|_{L^{\infty}} \|\bpartial D\Theta\|_{ L^4}\Big) 
  +\|  A\|_{ L^{\infty}}\Big(\|d (\bpartial D^2\rho_0)\|_{L^2} \|\frac{\Theta}{d}\|_{ L^{\infty}}+\|\bpartial D\rho_0\|_{L^2} \|D\Theta\|_{L^{\infty}}\\
  &\qquad+\|\bpartial\rho_0\|_{L^4}\|D^2\Theta\|_{L^4}+\|d (D^2\rho_0)\|_{L^2} \|\frac{\bpartial\Theta}{d}\|_{ L^{\infty}}+\|D\rho_0\|_{L^3} \|\bpartial D\Theta\|_{L^6}+\|\rho_0\|_{L^{\infty}}\|\bpartial D^2\Theta\|_{L^2}\Big)\bigg]^2\ dt',\\
&\lesssim \int_{0}^{t} \Big( t'P(\sup\limits_{[0,t']}F)+P(M_0)  \Big)\Big(\|\rho_0\|_{L^{\infty}}+\|\bpartial \rho_0\|_{H^1}+\|D\rho_0\|_{L^3}+\|d(D^2 \rho_0)\|_{L^2}+\|d(\bpartial D^2 \rho_0)\|_{L^2}\Big)^2\\
&\qquad\cdot\Big(\|\Theta\|_{H^3}+\|\bpartial D\Theta\|_{L^2}^{1/4}\|\bpartial D^3\Theta\|_{L^2}^{3/4}\Big)^2 \ dt'\\
&\lesssim \frac{1}{\epsilon}tP(\sup\limits_{t'\in[0,t]}F)+\frac{1}{\epsilon}P(M_0)+\epsilon \int_{0}^{t}\|\bpartial D^3\Theta_t\|_{L^2}^2 \ dt'.
\end{align*}
Note that $\Theta, \Theta_t=0$ on the boundary, and thus $\|\frac{\Theta}{d}, \frac{\bpartial\Theta}{d}\|_{ L^{\infty}}\lesssim \|D\Theta, \bpartial D\Theta\|_{ L^{\infty}}$. Here we have used the embedding inequalities $\|\cdot\|_{L^6}\lesssim \|\cdot\|_{H^1}$ and $\|\bpartial D\Theta\|_{ L^{\infty}}\lesssim  \|\bpartial D\Theta\|_{L^2}^{1/4}\|\bpartial D^3\Theta\|_{L^2}^{3/4} +\|\bpartial D\Theta\|_{L^2}$. Moreover, 
\begin{align*}
\mathrm{III}&\lesssim 
\int_{0}^{t}  \Big(\|\bpartial D^2(aA)\|_{L^2}\|Dv\|_{L^{\infty}}
+\|\bpartial D (aA)\|_{L^4}\|D^2 v\|_{L^4}
+\|\bpartial (aA)\|_{L^{\infty}}\|D^3 v\|_{L^2}\\
&\qquad
+\|D^2 (aA)\|_{L^2} \| \bpartial Dv \|_{L^{\infty}}
+\|D(aA)\|_{L^4} \|\bpartial D^2v\|_{L^4}\Big)^2 \ dt'\\
&\lesssim \int_{0}^{t} \Big( t'P(\sup\limits_{[0,t']}F)+P(M_0)  \Big)
\Big(\|v\|_{H^3}+\|\bpartial Dv\|_{L^2}^{1/4}\|\bpartial D^3v\|_{L^2}^{3/4}+\|\bpartial D^2v\|_{L^2}^{1/4}\|\bpartial D^3v\|_{L^2}^{3/4}\Big)^2 \ dt'\\
&\lesssim  \frac{1}{\epsilon}tP(\sup\limits_{t'\in[0,t]}F)+\frac{1}{\epsilon}P(M_0)+\epsilon \int_{0}^{t}\|\bpartial D^3v\|_{L^2}^2 \ dt'.
\end{align*}
Here we have used the embedding inequality
$\|\bpartial Dv\|_{L^{\infty}}\lesssim\|\bpartial Dv\|_{L^2}^{1/4}\|\bpartial D^3v\|_{L^2}^{3/4}+\|\bpartial Dv\|_{L^2}$ and $\|\bpartial D^2v\|_{L^4}\lesssim\|\bpartial D^2v\|_{L^2}^{1/4}\|\bpartial D^3v\|_{L^2}^{3/4}+\|\bpartial D^2v\|_{L^2}$. Also, we have
\begin{align*}
&\int_{0}^{t}\sum\limits_{i=1}^3\left\| a^r_j\Big[\mu (A^{k}_j  \bpartial Dv^i_{,rk}+A^k_i \bpartial Dv^j_{,rk})+\lambda A_l^k \bpartial Dv^l_{,rk}\delta^i_j\Big]   \right\|_{L^2}^2 \ dt'\\
\geq & \frac{1}{C}\int_{0}^{t}\|\bpartial \partial_3^3 v\|_{L^2}^2 \ dt'-CtP(\sup\limits_{t'\in[0,t]}F)-C\int_{0}^{t} \|\bpartial^2 v\|_{H^2}^2 \ dt'.
\end{align*}
So it follows from the estimates of $\mathrm{I},\mathrm{II},\mathrm{III}$ that
\begin{equation}
\label{dvxxx2}
\begin{aligned}
\int_{0}^{t}\| \bpartial v\|_{H^3}^2 \ dt' &\lesssim \frac{1}{\epsilon}tP(\sup\limits_{t'\in[0,t]}F)+\frac{1}{\epsilon}P(M_0)+\frac{1}{\epsilon}P\left(\int_{0}^{t}\|v_{t}\|_{H^2}^2 +\|\bpartial^2 v\|_{H^2}^2\ dt'\right)\\
&\qquad\qquad+\epsilon \int_{0}^{t}\|\bpartial D^3\Theta\|_{L^2}^2 \ dt'.
\end{aligned}
\end{equation}
Using the method to analysis $\bpartial^2 \Big(\eqref{1}_1\Big)$, we have that
\begin{equation}
\label{dvxxx1}
\begin{aligned}
\int_{0}^{t}\| \bpartial^2 v\|_{H^2}^2 \ dt' &\lesssim tP(\sup\limits_{t'\in[0,t]}F)+\frac{1}{\epsilon}P(M_0)+P\left(\int_{0}^{t}\|v_{t}\|_{H^3}^2 +\|\bpartial^3 v\|_{H^1}^2\ dt'\right)
 .
\end{aligned}
\end{equation}
Here $\|\bpartial D^3\Theta\|_{L^2}^2$ does not appear because $\|\bpartial^2 D(\rho_0 \Theta)\|_{L^2}\lesssim \|\bpartial D\rho_0\|_{L^2}\|\bpartial\Theta\|_{L^{\infty}}+\cdots \lesssim \|\bpartial D\rho_0\|_{L^2}\|\Theta\|_{H^3}+\cdots$.

\vspace{0.3cm}

\textsl{\textbf{Estimates of $\int_{0}^{t}\|\bpartial \Theta\|_{H^3}^2\ dt'$:}}~~
$\bpartial D\Big(\eqref{1}_2\Big)$ yields that
\begin{align*}
c_v\bpartial D(\rho_0 \Theta_t)+\bpartial D \left( \frac{R\rho_0\Theta}{J}a^r_iv^i_{,r}\right)=\bpartial D(\mathbb{S}_{\eta}^{ij}[v]a^r_jv^i_{,r})+\kappa \bpartial D (a^r_i A^k_j \delta^{ij}\Theta_{,k})_{,r}.
\end{align*}
Then
\begin{align*}
\int_{0}^{t}\sum\limits_{i=1}^3\left\|a^r_i A^k_j \delta^{ij} \bpartial D\Theta_{,rk} \right\|_{L^2}^2 \ dt'
\lesssim& \int_{0}^{t} \|\bpartial D(\rho_0 \Theta_t)\|_{L^2}^2
   +\| \bpartial D(A\rho_0\Theta\cdot Dv)\|_{L^2}^2\\
   +\| \bpartial D(a A \cdot |Dv|^2)\|_{L^2}^2&
   +\|\bpartial D^2(aA\cdot D\Theta)-(aA)\bpartial D^3 \Theta\|_{L^2}^2\ dt'\\
:=&\mathrm{I}+\mathrm{II}+\mathrm{III} +\mathrm{IV}.
\end{align*}
The terms in the right hand side are estimated as following
\begin{align*}
\mathrm{I}&\lesssim \int_{0}^{t} \|\Theta_t\|_{H^2}^2\ dt',\\
\mathrm{II}&\lesssim \int_{0}^{t} \Big[ 
    \|\bpartial D(A\rho_0\Theta)\|_{L^2} \|Dv\|_{L^{\infty}}
    +\|\bpartial (A\rho_0\Theta)\|_{L^4}\|D^2v\|_{L^4}\\
     &\qquad+ \| D(A\rho_0\Theta)\|_{L^3} \|D^2v\|_{L^6}
     +\| (A\rho_0\Theta)\|_{L^{\infty}}\|D^3v\|_{L^2} \Big]^2 \ dt'\\
&\lesssim \int_{0}^{t} \Big( t'P(\sup\limits_{[0,t']}F)+P(M_0)  \Big)
     \Big(\|\rho_0\|_{L^{\infty}}+\|D\rho_0\|_{L^3}+\|\bpartial \rho_0\|_{H^1}\Big)^2
     \Big(\|\Theta\|_{H^3}\|v\|_{H^3} \Big)^2\ dt'\\
&\lesssim tP(\sup\limits_{t'\in[0,t]}F)+P(M_0),
\end{align*}
\begin{align*}
\mathrm{III}&\lesssim \int_{0}^{t}  \Big[
      \|\bpartial D(aA)\|_{L^4} \|Dv\|_{L^{\infty}}^2
     +\| D (aA)\|_{L^4}  \|\bpartial D v\|_{L^4}\|Dv\|_{L^{\infty}}
     + \| \bpartial (aA)\|_{L^{\infty}}  \|D^2 v\|_{L^2}\|Dv\|_{L^{\infty}}\\
         &\qquad
     +\|(aA)\|_{L^{\infty}}\Big( \|\bpartial D^2 v\|_{L^2}\|Dv\|_{L^{\infty}}+\|\bpartial D v\|_{L^4}\|D^2v\|_{L^4}\Big)\Big]^2 \ dt'\\
&\lesssim  \int_{0}^{t} \Big( t'P(\sup\limits_{[0,t']}F)+P(M_0)  \Big)
      \|v\|_{H^3}^4\ dt'\\
&\lesssim tP(\sup\limits_{t'\in[0,t]}F)+P(M_0),\\
\mathrm{IV}
&\lesssim  \frac{1}{\epsilon}tP(\sup\limits_{t'\in[0,t]}F)+\frac{1}{\epsilon}P(M_0)+\epsilon \int_{0}^{t}\|\bpartial D^3\Theta\|_{L^2}^2 \ dt',
\end{align*}
Meanwhile,
\begin{align*}
\int_{0}^{t}\sum\limits_{i=1}^3\left\|a^r_iA^k_i\bpartial D\Theta_{,rk} \right\|_{L^2}^2 \ dt'
\geq  \frac{1}{C}\int_{0}^{t}\|\bpartial\partial_3^3 \Theta\|_{L^2}^2 \ dt'-CtP(\sup\limits_{t'\in[0,t]}F)-C\int_{0}^{t} \|\bpartial^2 \Theta\|_{H^2}^2 \ dt'.
\end{align*}
So it follows from the estimates of $\mathrm{I},\mathrm{II},\mathrm{III}$ that
\begin{equation}
\label{dthetaxxx2}
\begin{aligned}
\int_{0}^{t}\|\bpartial \Theta\|_{H^3}^2 \ dt' &\lesssim tP(\sup\limits_{t'\in[0,t]}F)+P(M_0)+P\left(\int_{0}^{t}\|\Theta_t\|_{H^2}^2 +\|\bpartial^2 \Theta\|_{H^2}^2\ dt'\right),
\end{aligned}
\end{equation}
\vspace{0.3cm}
Using the method to analysis $\bpartial^2  \Big(\eqref{1}_2\Big)$, we have that
\begin{equation}
\label{dthetaxxx1}
\begin{aligned}
\int_{0}^{t}\| \bpartial^2 \Theta\|_{H^2}^2 \ dt' &\lesssim tP(\sup\limits_{t'\in[0,t]}F)+\frac{1}{\epsilon}P(M_0)+P\left(\int_{0}^{t}\|\Theta_t\|_{H^2}^2 +\|\bpartial^3 \Theta\|_{H_0^1}^2\ dt'\right).
\end{aligned}
\end{equation}

\vspace{0.3cm}

\vspace{0.3cm}

\begin{proof}[\textbf{Proof of Proposition \ref{prop-of-estimates}}]
	\eqref{a priori estimates} can be closed by combining the inequalities from \eqref{direct} to \eqref{dthetaxxx1}, and choosing $\epsilon$ small enough.

\end{proof}

\vspace{0.7cm}
\section{Existence and uniqueness proofs}
Based on the a priori estimate \eqref{a priori estimates}, in this section we will prove the local existence and uniqueness of the strong solutions to the free boundary problem \eqref{1} can be proved by a fixed point argument.

\vspace{0.3cm}
Let $V_T$ be the space of the completion of $C^{\infty}(\overline{\Omega}\times[0,T])$ under the norm
\begin{equation}\label{def_VT}
\begin{aligned}
\sup\limits_{t\in[0,T]}\Big(\|\rho_{0}^{1/2}v\|_{L^{2}}^2+\|\rho_{0}^{1/2}\Theta\|_{L^{2}}^2\Big)+\int_{0}^{T}\|v\|_{H^1}^2 +\|\Theta\|_{H^1}^2\ dt
\end{aligned}
\end{equation}
and $X_T$ be the subset of $V_T$ defined by
\begin{equation}\label{def_XT}
\begin{aligned}
X_T&=\{ (v,\Theta)\in V_T\large|~ v\in L^{\infty}(0,T;H^3),\Theta\in L^{\infty}(0,T;H^3\cap H_0^1),)\\
& ~~v_t\in L^{\infty}(0,T;H^1)\cap L^2(0,T;H^3),~ \Theta_t\in L^{\infty}(0,T;H^1_0)\cap L^2(0,T;H^3),\\
&~~ v_{tt}\in L^2(0,T;H^1), ~\Theta_{tt}\in L^2(0,T;H^1),~\sup\limits_{t\in[0,T]}F(v,\Theta) \leq M_1\}, 
\end{aligned}
\end{equation}
for some $M_1>0$.
Therefore, $X_T$ is the compact and convex subset of $V_T$.

Now we define $\Xi: X_T\rightarrow X_T$ as the solving operator as follows: for any $(\widetilde{v},\widetilde{\Theta})\in X_T$, $\Xi(\widetilde{v}, \widetilde{\Theta})$ is the solution to the following equation
\begin{equation}
\label{2}
\left\{
\begin{aligned}
& \rho_0 v^i_t+\widetilde{a^r_i}(\frac{R\rho_0 \widetilde{\Theta}}{\widetilde{J}})_{,r} = \widetilde{a^r_j} \mathbb{S}^{ij}_{\widetilde{\eta}}[v]_{,r} &&\text{in}\ \Omega\times(0,T],\\
& c_v \rho_0\Theta_t +\frac{R\rho_0\widetilde{\Theta}}{\widetilde{J}}\widetilde{a^r_i}v^i_{,r}=  \mathbb{S}^{ij}_{\widetilde{\eta}}[v]\widetilde{a^r_j}v^i_{,r}
+\kappa \widetilde{a^r_i}(\nabla_{\widetilde{\eta}}\Theta)^i_{,r} &&\text{in}\ \Omega\times(0,T],\\
& \Theta=0,\ \widetilde{a_j^3} \mathbb{S}_{\widetilde{\eta}}^{ij}[v]=0 &&\text{on}\ \Gamma\times(0,T],\\
& (v,\Theta)=(u_0,\theta_0) && \text{on}\ \Omega\times\{t=0 \},
\end{aligned}
\right. 
\end{equation}
where $\widetilde{\eta}$ is the flow map determined by $\widetilde{v}$ and the ODE \eqref{def-eta}, and  $\widetilde{A},~\widetilde{a}$ are the corresponding matrices of $\widetilde{\eta}$ given similarly as in Section 2.1.


In the following, we will prove that when $T$ is sufficiently small, $\Xi:X_T\rightarrow X_T$ is well-defined, and also a contraction mapping in $X_T$. Therefore, there exists the unique fixed point of $\Xi$, which is exactly the strong solution to \eqref{1}.

\vspace{0.3cm}


\subsection{Solutions to the parabolic equations \eqref{2}}~

\vspace{0.3cm}
In this subsection, we will prove that $\Xi:X_T\rightarrow X_T$ is well-defined provided $T$ is sufficiently small. We split the argument into two steps. First, existence and uniqueness of the weak solutions to \eqref{2} are proved; in step 2, we will verify the boundary conditions, and improve the regularity of the solutions.

\textbf{Step 1: Existence and uniqueness of the weak solutions to the parabolic equations \eqref{2}.} 

\begin{Def}\label{def_weak}
	$(v,\Theta)\in L^2(0,T; H^1)\times L^2(0,T;H^1_0)$ with $(\rho_0v_t,\rho_0\Theta_t)\in L^2(0,T; H^{1*})\times L^2(0,T;H^{-1})$ is called the weak solution to \eqref{2} provided 
	for a.e. $t\in (0,T)$ and arbitrarily $\phi^i\in C^{\infty}(\overline{\Omega}\times[0,T]),~\psi\in C^{\infty}([0,T];C^{\infty}_c(\Omega))$,  $i=1,2,3$, we have
\begin{equation}
\label{weak}
\left\{\begin{aligned}
&\Big(\rho_0v^i (\cdot, t)~, \phi^i(\cdot,t) \Big)
   -\int_{0}^{t}\Big(\rho_0v^i~, \phi^i_t \Big) \ dt'
   +\int_{0}^{t}\Big(\widetilde{a^r_j}  \mathbb{S}^{ij}_{\widetilde{\eta}}[v]~, \phi^i_{,r}\Big)\ dt'\\
 &\qquad\qquad  =\Big(\rho_0u^i_0~, \phi^i(\cdot,0) \Big)
   +\int_{0}^{t} \Big(\widetilde{a^r_i}(\frac{R\rho_0 \widetilde{\Theta}}{\widetilde{J}})~, \phi^i_{,r}\Big)\ dt',\\
&\Big(c_v \rho_0\Theta(\cdot,t)~,\psi(\cdot,t)\Big)
   -\int_{0}^{t} \Big(c_v \rho_0\Theta~,\psi_t\Big) \ dt'
   +\kappa \int_{0}^{t} \Big(\widetilde{a^r_i}(\nabla_{\widetilde{\eta}}\Theta)^i~, \psi_{,r}\Big)\ dt'\\ 
&\qquad\qquad =\Big(c_v \rho_0\theta_0~,\psi(\cdot,0)\Big)
    -\int_{0}^{t}\Big(\frac{R\rho_0\widetilde{\Theta}}{\widetilde{J}}\widetilde{a^r_i}v^i_{,r}~,\psi\Big)\ dt' \\
 &\qquad\qquad\qquad\qquad +\int_{0}^{t}\Big(\mathbb{S}^{ij}_{\widetilde{\eta}}[v]\widetilde{a^r_j}v^i_{,r}~, \psi\Big)\ dt',
\end{aligned}
\right.
\end{equation}
where $\Big(f~,g\Big):=\int_{\Omega} f(x)g(x)\ dx$, $H^{1*}$ is the dual space of $H^1$ and $C^{\infty}_c(\Omega)$ is the set of smooth functions with compact supports in the open set $\Omega$.
\end{Def}

\begin{Lem}
	Assume that $\rho_0^{1/2} u_0\in L^2$ and  $\rho_0^{1/2} \theta_0\in L^2$. 
	Then there exists a small enough $T>0$ and a unique weak solution $(v,\Theta) \in L^2(0,T; H^1)\times L^2(0,T;H^1_0)$ with $(\rho_0v_t,\rho_0\Theta_t)\in L^2(0,T; H^{1*})\times L^2(0,T;H^{-1})$ to \eqref{2}.
	Moreover, 
	\begin{align*}
	&	\sup\limits_{t\in[0,T]} (\|\rho_0^{1/2} v\|_{L^2}^2+\|\rho_0^{1/2} \Theta\|_{L^2}^2) +\int_{0}^{T}\|v\|_{H^1}^2+\|\Theta\|_{H^1}^2 dt 
	\\
	&\qquad\qquad\qquad\qquad\qquad\lesssim \|\rho_0^{1/2} u_0\|_{L^2}^2+\|\rho_0^{1/2} \Theta_0\|_{L^2}^2+TP(M_1).
	\end{align*}
\end{Lem}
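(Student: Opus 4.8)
The two equations in \eqref{2} decouple: the first one involves $\widetilde{\Theta}$ only through a given source and is thus a closed linear system for $v$, degenerate parabolic in the sense that the time derivative carries the weight $\rho_0$ while the viscous term is uniformly elliptic in space once the coefficients are frozen at the given flow map $\widetilde{\eta}$. The plan is therefore to solve for $v$ first by a Faedo--Galerkin scheme, then feed $v$ into the temperature equation and solve for $\Theta$ the same way, and finally obtain uniqueness of each by a Gr\"onwall argument on the difference of two solutions. Throughout, the coefficient norms $\|\widetilde{a}\|_{L^\infty}$, $\|\widetilde{A}\|_{L^\infty}$, $\|\widetilde{J}^{-1}\|_{L^\infty}$, $\|\widetilde{\Theta}\|_{L^\infty}$ are $\le P(M_1)$ and $\|\widetilde{A}-\delta\|_{L^\infty}\lesssim T^{1/2}P(M_1)^{1/2}$ is small for $T$ small, since $(\widetilde{v},\widetilde{\Theta})\in X_T$ and the argument of Remark~\ref{rem-assumption} applies to $\widetilde{\eta}$.

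For $v$, I would fix a basis $\{w_k\}_{k\ge 1}$ of $H^1(\Omega;\mathbb{R}^3)$ and seek $v^N=\sum_{k=1}^N d^N_k(t) w_k$ solving the finite-dimensional system obtained by testing the first equation of \eqref{2} against $w_1,\dots,w_N$; the mass matrix $\int_\Omega \rho_0\, w_j\cdot w_k\,dx$ is positive definite because $\rho_0>0$ a.e., so this ODE system is solvable, and global once the bound below is in place. Testing with $v^N$ itself, the viscous form $B[v,v]:=\int_\Omega \widetilde{a^r_j}\mathbb{S}^{ij}_{\widetilde{\eta}}[v]\, v^i_{,r}\,dx$ is rewritten, exactly as in the higher-order energy estimate for $\|\rho_0^{1/2}v_{tt}\|_{L^2}^2$, in terms of $\widetilde{J}$ and the symmetrized Eulerian gradient of $v$; using $\mu>0$ and $2\mu+3\lambda>0$ this gives $B[v,v]\ge \frac{1}{C}\int_\Omega \sum_{i,j}(\widetilde{A}^k_j v^i_{,k}+\widetilde{A}^k_i v^j_{,k})^2\,dx$, and together with the smallness of $\|\widetilde{A}-\delta\|_{L^\infty}$ and the Hardy--Korn inequality \eqref{Korn2} this yields $B[v,v]+C\|\rho_0^{1/2}v\|_{L^2}^2\ge \frac{1}{C}\|v\|_{H^1}^2$. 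The source $\int_\Omega \widetilde{a^r_i}(R\rho_0\widetilde{\Theta}/\widetilde{J}) v^i_{,r}\,dx$ is bounded by $\|\widetilde{a}\|_{L^\infty}\|\widetilde{J}^{-1}\|_{L^\infty}\|\rho_0\|_{L^\infty}\|\widetilde{\Theta}\|_{L^\infty}\|Dv\|_{L^2}\lesssim P(M_1)\|Dv\|_{L^2}$ and is absorbed by $\epsilon\|v\|_{H^1}^2$. Gr\"onwall then gives $\sup_{[0,T]}\|\rho_0^{1/2}v\|_{L^2}^2+\int_0^T\|v\|_{H^1}^2\,dt\lesssim \|\rho_0^{1/2}u_0\|_{L^2}^2+TP(M_1)$ uniformly in $N$; a weak limit is a weak solution $v\in L^\infty(0,T;L^2_{\rho_0})\cap L^2(0,T;H^1)$, and reading the equation back gives $\rho_0 v_t\in L^2(0,T;H^{1*})$, hence weak attainment of $u_0$ and $\rho_0^{1/2}v\in C_w([0,T];L^2)$.

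With $v$ in hand, the second equation of \eqref{2} is a linear degenerate parabolic equation with homogeneous Dirichlet data, which I would solve by the same Galerkin scheme with a basis of $H^1_0(\Omega)$. Coercivity of $\kappa\int_\Omega \widetilde{a^r_i}(\nabla_{\widetilde{\eta}}\Theta)^i\Theta_{,r}\,dx$ follows from the positivity computation applied to the structure $\widetilde{a^r_i}\widetilde{A}^k_j\delta^{ij}$, giving $\ge \frac{\kappa}{C}\|\nabla\Theta\|_{L^2}^2$ up to the small error $\|\widetilde{A}-\delta\|_{L^\infty}\|\nabla\Theta\|_{L^2}^2$, and the Poincar\'e inequality (available since $\Theta\in H^1_0$) controls $\|\Theta\|_{L^2}$. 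The linear source $\frac{R\rho_0\widetilde{\Theta}}{\widetilde{J}}\widetilde{a^r_i}v^i_{,r}$ lies in $L^2(0,T;L^2)$ with norm $\lesssim P(M_1)^{1/2}\|Dv\|_{L^2(0,T;L^2)}$ and is absorbed after pairing with $\Theta$. The main obstacle is the viscous-heating term $\mathbb{S}^{ij}_{\widetilde{\eta}}[v]\widetilde{a^r_j}v^i_{,r}$: being quadratic in $Dv$, with only $v\in L^2(0,T;H^1)$ it sits merely in $L^1((0,T)\times\Omega)$, below the energy threshold. To close the estimate I would either first upgrade $v$ to $L^2(0,T;H^2_{\mathrm{loc}})$ by interior parabolic regularity --- legitimate because the first equation of \eqref{2} is uniformly parabolic on $\{\rho_0\ge c>0\}$ --- and combine $\int_0^t\|Dv\|_{L^2}^2\,dt'\lesssim\|\rho_0^{1/2}u_0\|_{L^2}^2+TP(M_1)$ with the parabolic embedding $L^\infty_tL^2_x\cap L^2_tH^1_x\hookrightarrow L^{10/3}_{t,x}$ for $\Theta$ to bound $\int_0^t\int_\Omega|Dv|^2|\Theta|$, or keep the heating term as an $L^1$ datum, solve for $\Theta$ first in a weaker duality sense, and recover the stated energy bound a posteriori from the sign structure $\mathbb{S}^{ij}_{\widetilde{\eta}}[v]\widetilde{a^r_j}v^i_{,r}\ge 0$ together with that $L^1$ bound. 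Either way, the Gr\"onwall inequality $\frac{d}{dt}\|\rho_0^{1/2}\Theta\|_{L^2}^2+\frac{1}{C}\|\Theta\|_{H^1}^2\le C\|\rho_0^{1/2}\Theta\|_{L^2}^2+CP(M_1)(1+\|Dv\|_{L^2}^2)$, integrated against the already-established bound for $\int_0^T\|Dv\|_{L^2}^2\,dt$, yields the claimed estimate, and $\rho_0\Theta_t\in L^2(0,T;H^{-1})$ follows from the equation.

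Finally, uniqueness for each equation is immediate: the difference of two weak solutions of the first equation with identical data solves the homogeneous problem, and testing with it together with the coercivity above and Gr\"onwall forces it to vanish; the same argument then handles $\Theta$ once $v$ is fixed. I expect every step except the $L^1$ viscous-heating term in the temperature equation to be routine --- that term, where the degeneracy of $\rho_0$, the low regularity of the data, and the quadratic nonlinearity in $Dv$ all meet, is the delicate point.
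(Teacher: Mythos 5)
Your proposal follows the same route as the paper: decouple the system, run a Faedo--Galerkin scheme for $v$ with an $H^1$ basis (mass matrix invertible because $\rho_0>0$ a.e.\ in $\Omega$), obtain coercivity of the viscous bilinear form by rewriting it in Eulerian variables, using $\|\widetilde A-\delta\|_{L^\infty}\lesssim T^{1/2}P(M_1)^{1/2}$ and the weighted Korn--Hardy inequality \eqref{Korn2}, close by Gr\"onwall, pass to the weak limit, and then repeat for $\Theta$ with an $H^1_0$ basis; uniqueness by testing the difference. For the velocity equation this matches the paper's argument essentially line by line.

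Where you go further than the paper is in the temperature equation, and your observation there is sound. The paper writes only that ``the arguments for $\Theta$ are the same except that a different basis is used; the rest of the proof is omitted,'' but they are \emph{not} the same: the viscous-heating source $\mathbb{S}^{ij}_{\widetilde{\eta}}[v]\,\widetilde{a^r_j}v^i_{,r}$ is quadratic in $Dv$, and with only $v\in L^2(0,T;H^1)$ --- which is all the hypothesis $\rho_0^{1/2}u_0\in L^2$ buys from the $v$-equation --- it lies merely in $L^1((0,T)\times\Omega)$, hence not in $L^2(0,T;H^{-1})$ and not pairable with $\Theta\in L^2(0,T;H^1_0)$. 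Thus the Galerkin energy identity for $\Theta_m$ does not close under the hypotheses the lemma literally states, and the stated estimate is only recoverable with the extra regularity of $v$ that is present when the lemma is actually invoked inside the fixed-point argument ($M_0<\infty$, $\widetilde v\in X_T$, followed by the regularity-improvement lemma). Of your two proposed fixes, bootstrapping $v$ to $L^2(0,T;H^2)$ before treating $\Theta$ is the right idea (though the boundary bootstrap needs the compatibility conditions, not just $L^2$ data); the ``use the sign $\mathbb{S}^{ij}_{\widetilde{\eta}}[v]\widetilde{a^r_j}v^i_{,r}\ge0$'' route does not help the Gr\"onwall closure, because in the $\Theta$-energy identity this term is a \emph{source} of the wrong sign, and the sign only becomes useful once one additionally knows $\Theta\ge0$, which is itself established later. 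In short: same approach as the paper, but you have correctly identified the one step the paper waves away.
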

\begin{proof}
	We deal with the equation of $v$ $\eqref{1}_1$ first.
	Suppose that $\{w_s\}_{s=1}^{\infty}$ are the linearly independent basis of $H^1$ and the orthonormal basis of $L^2$.
	We define the Galerkin approximation at order $1\leq m<\infty$, and let the function $v^i_m:[0,T]\rightarrow H^1$ ($i=1,2,3$) of the form
	\begin{equation}
	\label{ode1}
	v^i_m(t)=\sum\limits_{s=1}^{m} d^i_{ms}(t)w_s,
	\end{equation}
	which satisfies that for any $s=\{1,2,\cdots,m\}$,
	\begin{align}
	\Big(\rho_0 (v^i_m)_t~, w_s \Big) 
	+ \Big(\widetilde{a^r_j}\mathbb{S}^{ij}_{\widetilde{\eta}}[v_m]~, w_{s,r} \Big)
	&=\Big(\widetilde{a^r_i} \frac{R\rho_0 \widetilde{\Theta}}{\widetilde{J}}~, w_{s,r} \Big),
	\label{ode2}\\
	d^i_{ms}(0)&=\Big(u^i_0~, w_s\Big) \label{ode3}.
	\end{align}
	Substituting \eqref{ode1} into \eqref{ode2} gives the following ODE system
	\begin{equation}
	\label{ode4}
	\begin{aligned}
	&\sum\limits_{l=1}^{m}\Big\{ 
	\Big(\rho_0 w_l~, w_s \Big) \big(d^i_{ml}(t)\big)'
	+ \Big(\widetilde{a^r_j} \mu \widetilde{A^k_j} w_{l,k}~, w_{s,r} \Big) d^i_{ml}(t)
	+ \Big(\widetilde{a^r_j} \mu \widetilde{A^k_i} w_{l,k}~, w_{s,r} \Big) d^j_{ml}(t)\\
	& \qquad
	+ \Big(\widetilde{a^r_i} \lambda \widetilde{A^k_j} w_{l,k}~, w_{s,r} \Big) d^j_{ml}(t) \Big\}
	=\Big(\widetilde{a^r_i} \frac{R\rho_0 \widetilde{\Theta}}{\widetilde{J}}~, w_{s,r} \Big) \qquad\qquad s=1,2,\cdots,m,~i=1,2,3.
	\end{aligned}
	\end{equation}
	Since $\{w_s\}$ are linearly independent, so are $\{\rho_0^{1/2} w_s\}$.
	Then the matrix
	\begin{equation*}
	\left(\Big(\rho_0w_l,w_s\Big)\right)_{m\times m}
	\end{equation*}
	is invertible, and so is the matrix
	\begin{equation*}
	\begin{pmatrix}
	\left(\Big(\rho_0w_l,w_s\Big)\right)_{m\times m} &0 &0\\
	0 &\left(\Big(\rho_0w_l,w_s\Big)\right)_{m\times m} &0\\
	0 &0 &\left(\Big(\rho_0w_l,w_s\Big)\right)_{m\times m}\\
	\end{pmatrix}.
	\end{equation*}
	Therefore, according to the standard theory of ODE, there exists a unique absolutely continuous function $d^i_{ms}(t)$ ($s=1,2,\cdots,m,~i=1,2,3$) satisfying \eqref{ode4} and \eqref{ode3}.
	Then $v^i_m(t)$ defined by \eqref{ode1} solves \eqref{ode2} and \eqref{ode3} on a time interval $[0,T_m]$.
	
	Now multiplying \eqref{ode2} by $d^i_{ms}$ and summing over all $s$ and $i$, we have
	\begin{equation*}
	\Big(\rho_0 (v^i_m)_t~, v^i_m \Big) 
    + \Big(\widetilde{a^r_j}\mathbb{S}^{ij}_{\widetilde{\eta}}[v_m]~, v^i_{m,r} \Big)
    =\Big(\widetilde{a^r_i} \frac{R\rho_0 \widetilde{\Theta}}{\widetilde{J}}~, v^i_{m,r} \Big),
	\end{equation*}
	\begin{align*}
	\qquad \Big(\widetilde{a^r_j}\mathbb{S}^{ij}_{\widetilde{\eta}}[v_m]~, v^i_{m,r} \Big)&=\Big(\widetilde{a^r_j}[\mu (\widetilde{A^{k}_j }v^i_{,k}+\widetilde{A^k_i} v^j_{,k})+\lambda (\widetilde{A_l^k} v^l_{,k} )\delta^i_j]~, v^i_{m,r} \Big)\\
	&\geq \frac{1}{C} \int_{\Omega} \sum\limits_{i,j}\Big(\widetilde{A^k_j} v^i_{m,k}+\widetilde{A^k_i}  v^j_{m,k}\Big)^2\\
	&\geq\frac{1}{C}\int_{\Omega}\sum\limits_{i,j}( v^i_{m,j}+v^j_{m,i})^2 
	-C|\widetilde{A}-\delta| |Dv_m|^2\\
	&\geq\frac{1}{C}\|v_m\|_{H^1}^2-C\|\rho_0^{1/2}v_m\|_{L^2}^2-CT_mM_1^{1/2} \|v_m\|_{H^1}^2,\\
	\left|\Big(\widetilde{a^r_i} \frac{R\rho_0 \widetilde{\Theta}}{\widetilde{J}}~, v^i_{m,r} \Big) \right|
	&\lesssim \frac{1}{\epsilon}\|\widetilde{a^r_i} \frac{R\rho_0 \widetilde{\Theta}}{\widetilde{J}}\|_{L^{\infty}}+\epsilon \|v_m\|_{H^1}^2
	\lesssim \frac{1}{\epsilon}P(M_1)+\epsilon \|v_m\|_{H^1}^2.
	\end{align*}
	Here $\| \widetilde{A}-\delta\|_{L^{\infty}}^2
	\lesssim  \Big(T_m\sup\limits_{t'\in[0,t]}\|\partial_t \widetilde{A}\|_{L^{\infty}}\Big)^2
	\lesssim \Big(T_m\sup\limits_{t\in[0,T_m]}\|D \widetilde{v}\|_{L^{\infty}}\Big)^2
	\lesssim T_m^2 M_1$.
	So by choosing $T_m$(independent of $u_0,\theta_0,\widetilde{v},\widetilde{\Theta}$) and $\epsilon$ small enough, 
	integrating it with respect to $t$ and using Gr\"{o}nwall's inequality, we have
	\begin{equation*}
	\sup\limits_{t\in[0,T_m]} \|\rho_0^{1/2} v_m(\cdot, t)\|_{L^2}^2 +\int_{0}^{T_m}\|v_m\|_{H^1}^2\ dt 
	\lesssim \|\rho_0^{1/2} u_0\|_{L^2}^2+T_mP(M_1).
	\end{equation*}
	Since the above inequality is uniformly bounded if $T_m\leq T$ for some $T>0$ independent of $u_0,\theta_0,\widetilde{v},\widetilde{\Theta}$, the existence time can be chosen uniformly, and we denote it just as $T$.
    Therefore, there exists a subsequence of $v_m$ that converges weakly to some $v\in L^2(0,T;H^1)$ which satisfies 
    \begin{equation*}
	\sup\limits_{t\in[0,T]} \|\rho_0^{1/2} v(\cdot, t)\|_{L^2}^2 +\int_{0}^{T}\|v\|_{H^1}^2 dt 
	\lesssim \|\rho_0^{1/2} u_0\|_{L^2}^2+TP(M_1).
	\end{equation*}
    It follows from the standard arguments of the parabolic equations theory that $v$ is a solution of $\eqref{2}_1$ in the sense of distribution for any test functions in $L^2(0,T;H^1)$.
    Further more,  the equation implies that $\rho_0v_t\in L^2(0,T;H^{1*})$ and similar arguments as obtaining the energy inequality implies that $v$ is unique.

    The arguments for $\Theta$ are the same except that a different basis is used: $\{\omega_s \}_{s=1}^{\infty}$ are the linearly independent basis of $H^1_0$ and the orthonormal basis of $L^2$. 
    The rest of the proof is omitted.
\end{proof}

\vspace{0.3cm}
\textbf{Step 2: Verification of the boundary conditions and the improved regularities of the solutions.} 

\begin{Lem}
	Suppose that $(v,\Theta)$ is the weak solution to \eqref{2} in the Definition \ref{def_weak}, and  $v\in L^2(0,T;H^2)$ and $\Theta \in L^2(0,T;H^2\cap H^1_0)$,
	then $(v,\Theta)$ satisfies the boundary condition $\eqref{2}_3$.
\end{Lem}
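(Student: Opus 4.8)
The boundary condition $\eqref{2}_3$ has two parts: the Dirichlet condition $\Theta=0$ on $\Gamma$ and the natural (Neumann-type) condition $\widetilde{a^3_j}\mathbb{S}^{ij}_{\widetilde\eta}[v]=0$ on $\Gamma$. The former is already built into the hypothesis $\Theta\in L^2(0,T;H^2\cap H^1_0)$ through the trace, and correspondingly the test functions $\psi$ in \eqref{weak} are taken with compact spatial support, so no further boundary information for $\Theta$ is hidden in the weak form; the whole content of the lemma is thus to recover the condition on $v$. The plan is the standard two-step ``weak solution plus extra regularity implies the natural boundary condition'' argument.

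First I will restrict the $v$-equation in \eqref{weak} to test functions $\phi^i\in C^\infty_c(\Omega\times(0,T))$, for which every boundary and endpoint term drops out, leaving
$$-\int_0^T\!\big(\rho_0 v^i,\phi^i_t\big)\,dt+\int_0^T\!\big(\widetilde{a^r_j}\mathbb{S}^{ij}_{\widetilde\eta}[v],\phi^i_{,r}\big)\,dt=\int_0^T\!\Big(\widetilde{a^r_i}\tfrac{R\rho_0\widetilde\Theta}{\widetilde J},\phi^i_{,r}\Big)\,dt .$$
Since $\widetilde v\in X_T$, the matrices $\widetilde a,\widetilde A$ belong to $L^\infty(0,T;H^2(\Omega))$ and $\widetilde J\geq 1/2$ by the a priori assumption \eqref{a priori assumption}; together with the hypothesis $v\in L^2(0,T;H^2)$, the bounds \eqref{rho3} on $\rho_0$, and $\widetilde\Theta\in L^\infty(0,T;H^3)$, the two fields $\widetilde{a^r_j}\mathbb{S}^{ij}_{\widetilde\eta}[v]$ and $\widetilde{a^r_i}\frac{R\rho_0\widetilde\Theta}{\widetilde J}$ lie in $L^2(0,T;H^1(\Omega))$ (using the embeddings $H^2\hookrightarrow L^\infty$ and $H^1\hookrightarrow L^6$ from Section 3 for the product estimates). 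I may therefore integrate by parts in $x$ with no boundary term, and the arbitrariness of $\phi^i$ gives the pointwise identity
$$\rho_0 v^i_t=-\big(\widetilde{a^r_j}\mathbb{S}^{ij}_{\widetilde\eta}[v]\big)_{,r}+\big(\widetilde{a^r_i}\tfrac{R\rho_0\widetilde\Theta}{\widetilde J}\big)_{,r}\qquad\text{a.e. in }\Omega\times(0,T),$$
so in particular $\rho_0 v^i_t\in L^2(\Omega\times(0,T))$; consequently $\rho_0v^i$, after modification on a null set of times, is continuous into $L^2$, with $\rho_0 v^i(\cdot,0)=\rho_0 u_0^i$ (the latter read off from \eqref{weak} by taking $\phi^i$ supported near $t=0$).

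In the second step I will return to \eqref{weak} with a general $\phi^i\in C^\infty(\overline{\Omega}\times[0,T])$ evaluated at $t=T$, integrate the first term by parts in time using $\rho_0v^i\in C([0,T];L^2)$ and the identity just obtained, and integrate the stress and pressure terms by parts in $x$, this time keeping the boundary integrals over $\Gamma$. All interior contributions cancel against the interior equation and the matching initial data, leaving
$$\int_0^T\!\!\int_\Gamma\Big(\widetilde{a^r_j}\mathbb{S}^{ij}_{\widetilde\eta}[v]-\widetilde{a^r_i}\tfrac{R\rho_0\widetilde\Theta}{\widetilde J}\Big)N_r\,\phi^i\,dS\,dt=0,$$
with $N$ the outward unit normal of $\Omega$. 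Since $\rho_0=0$ on $\Gamma$ by \eqref{rho1} the pressure boundary term vanishes, and since $N_r=\pm\delta_{r3}$ on the two flat components of $\Gamma$ this reduces to $\int_0^T\!\int_\Gamma\widetilde{a^3_j}\mathbb{S}^{ij}_{\widetilde\eta}[v]\,\phi^i\,dS\,dt=0$. As the traces on $\Gamma\times(0,T)$ of functions in $C^\infty(\overline{\Omega}\times[0,T])$ are dense in $L^2(\Gamma\times(0,T))$, and $\widetilde{a^3_j}\mathbb{S}^{ij}_{\widetilde\eta}[v]$ is the trace of an $L^2(0,T;H^1(\Omega))$ field hence lies in $L^2(\Gamma\times(0,T))$, I conclude $\widetilde{a^3_j}\mathbb{S}^{ij}_{\widetilde\eta}[v]=0$ a.e. on $\Gamma\times(0,T)$, which is exactly $\eqref{2}_3$.

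The one step that needs care is the regularity check in the first stage: verifying that the hypothesis $v\in L^2(0,T;H^2)$ --- together with the regularity of $\widetilde a,\widetilde A,\rho_0,\widetilde\Theta$ already at hand --- is precisely enough to put $\widetilde{a^r_j}\mathbb{S}^{ij}_{\widetilde\eta}[v]$ in $L^2(0,T;H^1(\Omega))$, so that its normal trace on $\Gamma$ is well defined and both integrations by parts in $x$ (the preliminary interior one and the one producing the boundary integral) are legitimate. This rests on the algebra-type product estimates for $H^2\cdot H^1$ in three dimensions and on \eqref{rho3}; once it is in place the remainder is routine bookkeeping.
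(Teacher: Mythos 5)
Your proof is correct and follows essentially the same route as the paper: restricting to compactly supported test functions to recover the pointwise interior equation, then returning to general test functions in the weak formulation, integrating by parts, and comparing to isolate the boundary integral, with $\rho_0=0$ on $\Gamma$ killing the pressure trace. You simply spell out the regularity and density details (the $L^2(0,T;H^1)$ regularity of the stress and pressure flux, the trace and density arguments) that the paper leaves implicit in the phrase ``when $v$ and $\Theta$ are regular \ldots integrating by parts and comparing with $\eqref{weak}_1$.''
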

\begin{proof}
	$\Theta=0$ on $\Gamma\times (0,T]$ can be directly obtained by $\Theta \in L^2(0,T;H^2\cap H^1_0)$.
	
    Since \eqref{weak} holds for any compacted supported smooth test functions,
	then when $v$ and $\Theta$ are regular, $v$ and $\Theta$ satisfy the equation \eqref{2} in $\Omega$ everywhere.
	So $\widetilde{a_j^3} \mathbb{S}_{\widetilde{\eta}}^{ij}[v]=0$ on $\Gamma\times (0,T]$ is verified by multiplying $\eqref{2}_1$ by an arbitrary test function $\phi=(\phi^1, \phi^2, \phi^3)$, where  $\phi^i\in C^{\infty}(\overline{\Omega}\times[0,T])$, integrating by parts and then comparing the result with $\eqref{weak}_1$.
\end{proof}

Having obtained the above two lemmas, by similar arguments to prove the a priori estimates Proposition \ref{prop-of-estimates}, and the standard arguments to improve the regularities of the solutions obtained by the Galerkin's method, 
one has the following:

\begin{Lem}
	Assume that $u_0, \theta_0$ satisfy the regularity $M_0=E(u_0, \theta_0)+1<\infty$, and the  compatible condition \eqref{compatible}
	Then the weak solution $(v,\Theta)$ is regular and satisfies
	\begin{equation*}
	\sup\limits_{t\in[0,T]}F(v,\Theta)\lesssim  P(M_0)+TP(M_1).
	\end{equation*}
\end{Lem}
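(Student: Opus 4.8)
The plan is to mirror the proof of Proposition~\ref{prop-of-estimates}, exploiting the fact that in the linearized system \eqref{2} the geometric quantities $\widetilde a,\widetilde A,\widetilde J,\widetilde\eta$ are \emph{given} data controlled by $M_1$ rather than by the unknown $(v,\Theta)$. Since $(\widetilde v,\widetilde\Theta)\in X_T$ one has $\sup_{[0,T]}F(\widetilde v,\widetilde\Theta)\le M_1$, so by the argument of Remark~\ref{rem-assumption} the a priori assumption \eqref{a priori assumption} holds for $\widetilde\eta$ once $T$ is small, and every norm of $\widetilde a,\widetilde A$ and their derivatives collected in Section~4.1 is bounded by $TP(M_1)+P(M_0)$; in particular $\|\widetilde A-\delta\|_{L^\infty}^2\lesssim T^2 M_1$. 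Consequently the whole machinery of Section~4 applies to \eqref{2}, with the simplification that the commutator terms produced by differentiating the coefficients no longer carry a factor $P(\sup F(v,\Theta))$.

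First I would set up the differentiated problems. Evaluating \eqref{2} at $t=0$ defines $u_{0t},u_{0tt},\theta_{0t},\theta_{0tt}$ exactly as in the definition \eqref{M_0} of $M_0$, so they lie in the spaces dictated by $F$, and the compatibility conditions \eqref{compatible} --- that $\theta_0,\bpartial\theta_0,\bpartial^2\theta_0,\theta_{0t}\in H^1_0$ and $\mathbb{S}^{i3}_{\mathrm{Id}}[v_0]=\mathbb{S}^{i3}_{\mathrm{Id}}[\bpartial v_0]=0$ on $\Gamma$ --- are precisely what makes the boundary contributions vanish when one integrates by parts after applying $\partial_t,\partial_t^2,\bpartial,\bpartial^2,\bpartial^3,\bpartial D$ to \eqref{2}. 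Working with the smooth Galerkin approximations $(v_m,\Theta_m)$ of the previous lemma, I would then rerun, in the same order as in Sections~4.2--4.4, the energy identities: test $\partial_t^2\eqref{2}_1$ with $v_{tt}$ and $\partial_t^2\eqref{2}_2$ with $\Theta_{tt}$; $\bpartial^2\partial_t\eqref{2}_1$ with $\bpartial^2 v_t$ and $\bpartial^2\partial_t\eqref{2}_2$ with $\bpartial^2\Theta_t$; the lower-order tangential analogues; and the elliptic identities obtained from $\partial_t\eqref{2}$, $\partial_t D\eqref{2}$, $\bpartial D\eqref{2}$, $\bpartial^2\eqref{2}$. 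Coercivity of the principal parts comes from $\mu>0$, $2\mu+3\lambda>0$ together with $\|\widetilde A-\delta\|_{L^\infty}$ small and the degenerate Korn--Hardy inequality \eqref{Korn2}, which supplies the $L^2$-control of $v$ and its tangential derivatives that cannot come from Poincar\'e since $v$ does not vanish on $\Gamma$; the positivity and boundary vanishing of the relevant derivatives of $\Theta$ come from \eqref{compatible}. Each commutator is bounded, via the Section~4.1 estimates, by $\frac1\epsilon(TP(M_1)+P(M_0))$ times a norm already present in $F$ plus $\epsilon$ times a dissipation term, and absorbing the $\epsilon$-terms in the sequence used in Proposition~\ref{prop-of-estimates} yields
\[
\sup_{t\in[0,T]}F(v_m,\Theta_m)\lesssim P(M_0)+TP(M_1)\qquad (\text{uniformly in }m).
\]

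Finally I would pass to the limit $m\to\infty$: the uniform bound gives weak-$*$ limits in the spaces defining $X_T$, lower semicontinuity of the norms preserves the estimate, and the standard parabolic-regularity upgrade for the Galerkin scheme --- together with the elliptic estimates of Section~4.4 applied to the uniformly elliptic operators $v\mapsto\widetilde a^r_j\mathbb{S}^{ij}_{\widetilde\eta}[v]_{,r}$ and $\Theta\mapsto\kappa\,\widetilde a^r_i(\nabla_{\widetilde\eta}\Theta)^i_{,r}$ --- gives the full spatial regularity up to $\Gamma$; thus the weak solution $(v,\Theta)$ of the earlier lemmas is regular, obeys $\sup_{[0,T]}F(v,\Theta)\lesssim P(M_0)+TP(M_1)$, and lies in $X_T$ once $M_1$ is fixed with $M_1\ge 2CP(M_0)$ and $T$ is then chosen small enough that $CTP(M_1)\le\frac12 M_1$. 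The main obstacle is the one already faced in Proposition~\ref{prop-of-estimates}: the coefficient $\rho_0$ of the time derivatives degenerates on $\Gamma$, so one controls only $\|\rho_0^{1/2}v_{tt}\|_{L^2}$ and $\|\rho_0^{1/2}\Theta_{tt}\|_{L^2}$ and must recover the full norms of $v,v_t$ through \eqref{Korn2} and of $\Theta,\Theta_t$ through elliptic estimates, while checking that every commutator generated by the frozen coefficients is genuinely lower-order and $O(T)$-small --- which is exactly the bookkeeping performed throughout Section~4.
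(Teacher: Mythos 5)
Your proposal follows the paper's own (quite terse) argument almost verbatim: the paper simply invokes ``similar arguments to prove the a priori estimates Proposition~\ref{prop-of-estimates}, and the standard arguments to improve the regularities of the solutions obtained by the Galerkin's method,'' which is exactly the program you lay out --- freeze the geometric coefficients via $M_1$, verify \eqref{a priori assumption} for $\widetilde\eta$ as in Remark~\ref{rem-assumption}, rerun the energy and elliptic estimates of Section~4 on the Galerkin level, and pass to the limit with weak lower semicontinuity. Your identification of the key simplification (the commutator coefficients now scale with the given $M_1$ and in particular $\|\widetilde A-\delta\|_{L^\infty}^2\lesssim T^2M_1$, so coercivity survives) and of the role of the compatibility conditions \eqref{compatible} in killing the boundary terms matches the paper's intent.
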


Note that here the a priori assumption \eqref{a priori assumption} is verified by replacing $2P(M_0)$ in Remark \ref{rem-assumption} as $M_1$ and choosing $T$ small enough.
So when $P(M_0)\leq M_1$ and $T$ is small enough, given any $(\widetilde{v},\widetilde{\Theta})\in X_T$, there exists a unique $(v,\Theta)\in X_T$ which satisfies \eqref{2}. 
Therefore, $\Xi: X_T\rightarrow X_T$ is well-defined.

\vspace{0.3cm}

\subsection{Contraction mapping}~ 

\vspace{0.3cm} 
In this subsection, we will prove that 
$\Xi$ is actually a contraction mapping in $X_T$ under the norm \eqref{def_VT}, and then finish the proof of Theorem \ref{mainthm}.

\vspace{0.3cm}
For any $(\widetilde{v}^{(n)}, \widetilde{\Theta}^{(n)}) \in X_T$, $n=1,2$, denote that $(v^{(n)},\Theta^{(n)}):=\Xi(\widetilde{v}^{(n)}, \widetilde{\Theta}^{(n)})$.
Moreover, we write 
where $\widetilde{\eta}^{(n)}$ is the flow map determined by $\widetilde{v}^{(n)}$, and $\widetilde{A}^{(n)}$, $\widetilde{a}^{(n)}$ are the corresponding matrices of $\widetilde{\eta}^{(n)}$ given similarly as in Section 2.1. Here for simplicity, symbols like  $(\widetilde{a}^{(n)})^r_i$ are written as $\widetilde{a}^{(n)r}_i$. Then we estimate the norm of $((v^{(1)},\Theta^{(1)})-(v^{(2)},\Theta^{(2)}))$ by the basic energy estimates.

Integrating the difference of the velocity equations of $((v^{(n)},\Theta^{(n)})$ \eqref{2} ($n=1,2$) dot product by $(v^{(1)i}-v^{(2)i})$ yields the following
\begin{align*}
&\frac{1}{2}\int_{\Omega}\rho_{0}\Big| v^{(1)}-v^{(2)} \Big|^2 \\
&\qquad -\underbrace{\iint \Big(\widetilde{a}^{(1)r}_i \frac{R\rho_{0}\widetilde{\Theta}^{(1)} }{\widetilde{J}^{(1)}}-\widetilde{a}^{(2)r}_i \frac{R\rho_{0}\widetilde{\Theta}^{(2)} }{\widetilde{J}^{(2)}}  \Big) \Big(v^{(1)i}_{,r}-v^{(2)i}_{,r}\Big)}_{\mathrm{I}}\\
&\qquad +\underbrace{\iint \Big( \widetilde{a}^{(1)r}_j\mathbb{S}^{ij}_{\widetilde{\eta}^{(1)}}[v^{(1)}]_{,r}-\widetilde{a}^{(2)r}_j\mathbb{S}^{ij}_{\widetilde{\eta}^{(2)}}[v^{(2)}]_{,r} \Big)\Big(v^{(1)i}_{,r}-v^{(2)i}_{,r}\Big)}_{\mathrm{II}}=0,
\end{align*}
where the first term of the above equation at $t=0$ equals to $0$ and thus is omitted, since  $(v^{(n)},\Theta^{(n)})$, $(n=1,2)$ share the same initial data. 
\begin{align*}
\mathrm{I}&=\iint \Big(\widetilde{A}^{(1)r}_i -\widetilde{A}^{(2)r}_i \Big)R\rho_{0}\widetilde{\Theta}^{(1)} \Big(v^{(1)i}_{,r}-v^{(2)i}_{,r}\Big)\\
 &\qquad +\iint \widetilde{A}^{(2)r}_iR\rho_{0}\Big(\widetilde{\Theta}^{(1)}-\widetilde{\Theta}^{(2)}  \Big) \Big(v^{(1)i}_{,r}-v^{(2)i}_{,r}\Big)\\
 &:= \mathrm{I}_1+\mathrm{I}_2,\\
|\mathrm{I}_1|&\lesssim\int_{0}^{t}\|\widetilde{A}^{(1)}-\widetilde{A}^{(2)}\|_{L^2}\|R\rho_{0}\widetilde{\Theta}^{(2)}\|_{L^{\infty}}\|Dv^{(1)}-Dv^{(2)}\|_{L^2} \ dt'\\
 &\lesssim \int_{0}^{t} \Big( \int_{0}^{t'} \|D\widetilde{v}^{(1)}-D\widetilde{v}^{(2)}\|_{L^2} \ d\tau \Big) \|\widetilde{\Theta}^{(2)}\|_{H^2}\|Dv^{(1)}-Dv^{(2)}\|_{L^2}\ dt'\\
 &\lesssim \int_{0}^{t} t'^{1/2} \Big(\int_{0}^{t'} \|\widetilde{v}^{(1)}-\widetilde{v}^{(2)}\|_{H^1}^2 \ d\tau \Big)^{1/2} M_1^{1/2}\|v^{(1)}-v^{(2)}\|_{H^1}\ dt'\\
 &\lesssim \frac{M_1}{\epsilon}t^2 \int_{0}^{t} \|\widetilde{v}^{(1)}-\widetilde{v}^{(2)}\|_{H^1}^2 \ dt' +\epsilon\int_{0}^{t} \|v^{(1)}-v^{(2)}\|_{H^1}^2 \ dt',\\
|\mathrm{I}_2|&\lesssim \int_{0}^{t} \|\widetilde{A}^{(2)}R\rho_{0}^{1/2}\|_{L^{\infty}}\|\rho_{0}^{1/2}(\widetilde{\Theta}^{(1)}-\widetilde{\Theta}^{(2)})\|_{L^{2}}\|Dv^{(1)}-Dv^{(2)}\|_{L^2} \ dt'\\
 &\lesssim \frac{1}{\epsilon}t\sup\limits_{t'\in[0,t]}\|\rho_{0}^{1/2}(\widetilde{\Theta}^{(1)}-\widetilde{\Theta}^{(2)})\|_{L^{2}}^2 +\epsilon\int_{0}^{t} \|v^{(1)}-v^{(2)}\|_{H^1}^2 \ dt',
\end{align*}
where $\|\widetilde{A}^{(1)}-\widetilde{A}^{(2)}\|_{L^2}  \int_{0}^{t'} \|D\widetilde{v}^{(1)}-D\widetilde{v}^{(2)}\|_{L^2} \ d\tau $ since the initial mapping of $\widetilde{\eta}^{(1)}, \widetilde{\eta}^{(2)}$ are both the identity map.
\begin{align*}
\mathrm{II}&=\iint \Big( \widetilde{a}^{(1)r}_j-\widetilde{a}^{(2)r}_j\Big)\mathbb{S}^{ij}_{\widetilde{\eta}^{(1)}}[v^{(1)}]\Big(v^{(1)i}_{,r}-v^{(2)i}_{,r}\Big) \\
 &\qquad+\iint \widetilde{a}^{(2)r}_j\Big(\mathbb{S}^{ij}_{\widetilde{\eta}^{(1)}}[v^{(1)}]-\mathbb{S}^{ij}_{\widetilde{\eta}^{(2)}}[v^{(1)}]\Big)\Big(v^{(1)i}_{,r}-v^{(2)i}_{,r}\Big) \\
 &\qquad+\iint \widetilde{a}^{(2)r}_j\Big(\mathbb{S}^{ij}_{\widetilde{\eta}^{(2)}}[v^{(1)}]-\mathbb{S}^{ij}_{\widetilde{\eta}^{(2)}}[v^{(2)}]\Big)\Big(v^{(1)i}_{,r}-v^{(2)i}_{,r}\Big)\\
 &:=\mathrm{II}_1+\mathrm{II}_2+\mathrm{II}_3,\\
|\mathrm{II}_1|+|\mathrm{II}_2|&\lesssim \int_{0}^{t} \|\widetilde{A}^{(1)}-\widetilde{A}^{(2)}\|_{L^2}\|v^{(1)}\|_{L^{\infty}}\|Dv^{(1)}-Dv^{(2)}\|_{L^{2}} \ dt'\\
 &\lesssim \int_{0}^{t} t'^{1/2} \Big(\int_{0}^{t'} \|\widetilde{v}^{(1)}-\widetilde{v}^{(2)}\|_{H^1}^2\ d\tau \Big)^{1/2} M_1^{1/2}\|v^{(1)}-v^{(2)}\|_{H^1} \ dt'\\
 &\lesssim \frac{M_1}{\epsilon}t^2 \int_{0}^{t} \|\widetilde{v}^{(1)}-\widetilde{v}^{(2)}\|_{H^1}^2 \ dt' +\epsilon\int_{0}^{t} \|v^{(n1)}-v^{(2)}\|_{H^1}^2 \ dt',\\
\mathrm{II}_3&\geq \frac{1}{C}\int_{0}^{t} \|v^{(1)}-v^{(2)}\|_{H^1}^2-C\|\rho_{0}^{1/2}(v^{(1)}-v^{(2)})\|_{L^{2}}^2\\
 &\qquad-C\|\widetilde{A}^{(2)}-\delta\|_{L^{\infty}}\|v^{(1)}-v^{(2)}\|_{H^1}^2 \ dt' \\
 &\geq \frac{1}{C}\int_{0}^{t} \|v^{(1)}-v^{(2)}\|_{H^1}^2\ dt'-Ct\sup\limits_{t'\in[0,t]}\|\rho_{0}^{1/2}(v^{(1)}-v^{(2)})\|_{L^{2}}^2\\
 &\qquad-C t^{1/2}M_1^{1/2}\int_{0}^{t}\|v^{(1)}-v^{(2)}\|_{H^1}^2 \ dt'.
\end{align*}
So when choosing $t$ small enough, one has
\begin{equation}\label{contraction1}
\begin{aligned}
&\sup\limits_{t'\in[0,t]}\|\rho_{0}^{1/2}(v^{(1)}-v^{(2)})\|_{L^{2}}^2+\int_{0}^{t} \|v^{(1)}-v^{(2)}\|_{H^1}^2\ dt'\lesssim\\
&\qquad\qquad\qquad t\left\{\sup\limits_{t'\in[0,t]}\|\rho_{0}^{1/2}(\widetilde{\Theta}^{(1)}-\widetilde{\Theta}^{(2)})\|_{L^{2}}^2+\int_{0}^{t} \|\widetilde{v}^{(1)}-\widetilde{v}^{(2)}\|_{H^1}^2\ dt'\right\}.
\end{aligned}
\end{equation}

Integrating the difference of the temperature equations $((v^{(n)},\Theta^{(n)})$ \eqref{2} ($n=1,2$) multiplied by $(\Theta^{(1)}-\Theta^{(2)})$ yields the following
\begin{align*}
& \frac{c_v}{2}\int_{\Omega}\rho_0\Big| \Theta^{(1)}-\Theta^{(2)} \Big|^2 \\
&\qquad+\underbrace{\iint   \Big(\frac{R\rho_0\widetilde{\Theta}^{(1)}}{\widetilde{J}^{(1)}}\widetilde{a}^{(1)r}_iv^{(1)i}_{,r}-\frac{R\rho_0\widetilde{\Theta}^{(2)}}{\widetilde{J}^{(2)}}\widetilde{a}^{(2)r}_iv^{(2)i}_{,r}\Big) \Big( \Theta^{(1)}-\Theta^{(2)}\Big)}_{\mathrm{I}}\\
&\qquad-\underbrace{\iint  \Big(\mathbb{S}^{ij}_{\widetilde{\eta}^{(1)}}[v^{(1)}]\widetilde{a}^{(1)r}_jv^{(1)i}_{,r}-\mathbb{S}^{ij}_{\widetilde{\eta}^{(2)}}[v^{(2)}]\widetilde{a}^{(2)r}_jv^{(2)i}_{,r}\Big)\Big( \Theta^{(1)}-\Theta^{(2)}\Big)}_{\mathrm{II}}\\
&+\underbrace{\kappa\iint \Big(\widetilde{a}^{(1)r}_i\widetilde{A}^{(1)k}_j \delta^{ij} \Theta^{(1)}_{,k}-\widetilde{a}^{(2)r}_i\widetilde{A}^{(2)k}_j \delta^{ij} \Theta^{(2)}_{,k}\Big)\Big( \Theta^{(1)}_{,r}-\Theta^{(2)}_{,r}\Big)}_{\mathrm{III}}=0.
\end{align*}
Then estimate the terms $\mathrm{I},\mathrm{II},\mathrm{III}$ one by one:
\begin{align*}
\mathrm{I}&=\iint R\rho_0\Big(\widetilde{\Theta}^{(1)}-\widetilde{\Theta}^{(2)}\Big)\widetilde{A}^{(1)r}_iv^{(1)i}_{,r} \Big( \Theta^{(1)}-\Theta^{(2)}\Big)\\
 &\qquad +\iint R\rho_0\widetilde{\Theta}^{(2)}\Big(\widetilde{A}^{(1)r}_i-\widetilde{A}^{(2)r}_i\Big)v^{(1)i}_{,r} \Big( \Theta^{(1)}-\Theta^{(2)}\Big)\\
 &\qquad +\iint R\rho_0\widetilde{\Theta}^{(2)}\widetilde{A}^{(2)r}_i\Big(v^{(1)i}_{,r} -v^{(2)i}_{,r} \Big)\Big( \Theta^{(1)}-\Theta^{(2)}\Big)\\
 &:=\mathrm{I}_1+ \mathrm{I}_2+\mathrm{I}_3,\\
|\mathrm{I}_1|&\lesssim \int_{0}^{t}\|R\rho_0^{1/2}\|_{L^{\infty}}\|\rho_0^{1/2}(\widetilde{\Theta}^{(1)}-\widetilde{\Theta}^{(2)})\|_{L^2}\|\widetilde{A}^{(1)}Dv^{(1)}\|_{L^{\infty}} \|\Theta^{(1)}-\Theta^{(2)}\|_{L^{2}} \ dt'\\
 &\lesssim \int_{0}^{t}\|\rho_0^{1/2}(\widetilde{\Theta}^{(1)}-\widetilde{\Theta}^{(2)})\|_{L^2}M_1^{1/2}\|\Theta^{(1)}-\Theta^{(2)}\|_{L^{2}} \ dt'\\
 &\lesssim \frac{M_1}{\epsilon}t\sup\limits_{t'\in[0,t]}\|\rho_{0}^{1/2}(\widetilde{\Theta}^{(1)}-\widetilde{\Theta}^{(2)})\|_{L^{2}}^2 +\epsilon\int_{0}^{t} \|\Theta^{(1)}-\Theta^{(2)}\|_{H^1}^2 \ dt',\\
 |\mathrm{I}_2|&\lesssim \int_{0}^{t}\|R\rho_0\widetilde{\Theta}^{(1)}\|_{L^{\infty}} \|\widetilde{A}^{(1)}-\widetilde{A}^{(2)}\|_{L^2}\|Dv^{(1)}\|_{L^{\infty}} \|\Theta^{(1)}-\Theta^{(2)}\|_{L^{2}} \ dt'\\
 &\lesssim \int_{0}^{t}M_1^{1/2} t'^{1/2} \Big(\int_{0}^{t'} \|\widetilde{v}^{(1)}-\widetilde{v}^{(2)}\|_{H^1}^2\ d\tau \Big)^{1/2} M_1^{1/2} \|\Theta^{(1)}-\Theta^{(2)}\|_{L^{2}} \ dt'\\
 &\lesssim \frac{M_1^2}{\epsilon}t^2\int_{0}^{t} \|\widetilde{v}^{(1)}-\widetilde{v}^{(2)}\|_{H^1}^2 \ dt' +\epsilon\int_{0}^{t} \|\Theta^{(1)}-\Theta^{(2)}\|_{H^1}^2 \ dt',\\
|\mathrm{I}_3|&\lesssim
\int_{0}^{t} \|R\rho_0\widetilde{\Theta}^{(1)}\widetilde{A}^{(1)}\|_{L^{\infty} } \|Dv^{(1)}-Dv^{(2)} \|_{L^2} \|\Theta^{(1)}-\Theta^{(2)}\|_{L^2}\ dt'\\
 &\lesssim \frac{M_1}{\epsilon}\int_{0}^{t}  \|v^{(1)}-v^{(2)} \|_{H^1}^2\ dt'+\epsilon\int_{0}^{t}\|\Theta^{(1)}-\Theta^{(2)} \|_{H^1}^2 \ dt',\\
|\mathrm{II}|&\lesssim \int_{0}^{t} M_1^{1/2} \|Dv^{(1)}-Dv^{(2)}\|_{L^{2}}\|\Theta^{(1)}-\Theta^{(2)}\|_{L^2} \ dt'\\
&\qquad+\int_{0}^{t} M_1 \|\widetilde{A}^{(1)}-\widetilde{A}^{(2)}\|_{L^{2}}\|\Theta^{(1)}-\Theta^{(2)}\|_{L^2} \ dt'\\
&\lesssim \frac{M_1}{\epsilon}\int_{0}^{t}  \|v^{(1)}-v^{(2)} \|_{H^1}^2\ dt'+\frac{M_1^2}{\epsilon}t^2\int_{0}^{t} \|\widetilde{v}^{(1)}-\widetilde{v}^{(2)}\|_{H^1}^2 \ dt'\\
&\qquad+\epsilon\int_{0}^{t}\|\Theta^{(1)}-\Theta^{(2)} \|_{H^1}^2 \ dt',
\end{align*}
\begin{align*}
\mathrm{III}&=\kappa\iint \Big(\widetilde{a}^{(1)r}_i-\widetilde{a}^{(2)r}_i\Big)\widetilde{A}^{(1)k}_j \delta^{ij} \Theta^{(1)}_{,k}\Big( \Theta^{(1)}_{,r}-\Theta^{(2)}_{,r}\Big)\\
 &\qquad+\kappa\iint \widetilde{a}^{(2)r}_i\Big(\widetilde{A}^{(1)k}_j -\widetilde{A}^{(2)k}_j \Big)\delta^{ij} \Theta^{(1)}_{,k}\Big( \Theta^{(1)}_{,r}-\Theta^{(2)}_{,r}\Big)\\
 &\qquad+\kappa\iint \widetilde{a}^{(2)r}_i\widetilde{A}^{(2)k}_j \delta^{ij} \Big( \Theta^{(1)}_{,k}- \Theta^{(2)}_{,k}\Big)\Big( \Theta^{(1)}_{,r}-\Theta^{(2)}_{,r}\Big)\\
&:=\mathrm{III}_1+\mathrm{III}_2+\mathrm{III}_3,\\
|\mathrm{III}_1|+|\mathrm{III}_2|&\lesssim \int_{0}^{t} \|\widetilde{A}^{(1)}-\widetilde{A}^{(2)}\|_{L^2} \|D\Theta^{(1)}\|_{L^{\infty}} \|D\Theta^{(1)}-D\Theta^{(2)}\|_{L^2} \ dt'\\
 &\lesssim \int_{0}^{t} t'^{1/2} \Big(\int_{0}^{t'} \|\widetilde{v}^{(1)}-\widetilde{v}^{(2)}\|_{H^1}^2\ d\tau \Big)^{1/2} M_1^{1/2} \|\Theta^{(1)}-\Theta^{(2)}\|_{H^1} \ dt'\\
&\lesssim \frac{M_1}{\epsilon}t^2\int_{0}^{t} \|\widetilde{v}^{(1)}-\widetilde{v}^{(2)}\|_{H^1}^2 \ dt' +\epsilon\int_{0}^{t} \|\Theta^{(1)}-\Theta^{(2)}\|_{H^1}^2 \ dt',\\
\mathrm{III}_3&\geq \frac{1}{C}\int_{0}^{t} \|\Theta^{(1)}-\Theta^{(2)}\|_{H^1}^2-
C\|\widetilde{A}^{(2)}-\delta\|_{L^{\infty}}\|\Theta^{(1)}-\Theta^{(2)}\|_{H^1}^2 \ dt' \\
&\geq \frac{1}{C}\int_{0}^{t} \|\Theta^{(1)}-\Theta^{(2)}\|_{H^1}^2\ dt'-C t^{1/2}M_1^{1/2}\int_{0}^{t}\|\Theta^{(1)}-\Theta^{(2)}\|_{H^1}^2 \ dt'. 
\end{align*}
So when choosing $t$ small enough, one has
\begin{equation}\label{contraction2}
\begin{aligned}
&\sup\limits_{t'\in[0,t]}\|\rho_{0}^{1/2}(\Theta^{(1)}-\Theta^{(2)})\|_{L^{2}}^2+\int_{0}^{t} \|\Theta^{(1)}-\Theta^{(2)}\|_{H^1}^2\ dt'\lesssim\\
&\qquad\qquad\qquad \int_{0}^{t} \|v^{(1)}-v^{(2)}\|_{H^1}^2\ dt'+ t\Big\{\sup\limits_{t'\in[0,t]}\|\rho_{0}^{1/2}(\widetilde{\Theta}^{(1)}-\widetilde{\Theta}^{(2)})\|_{L^{2}}^2\\&\qquad\qquad\qquad\qquad+\int_{0}^{t} \|\widetilde{v}^{(1)}-\widetilde{v}^{(2)}\|_{H^1}^2\ dt'\Big\}.
\end{aligned}
\end{equation}

Therefore, combining \eqref{contraction1} and \eqref{contraction2}, we have
\begin{equation}
\label{contraction3}
\begin{aligned}
&\sup\limits_{t\in[0,T]}\Big(\|\rho_{0}^{1/2}(v^{(1)}-v^{(2)})\|_{L^{2}}^2+\|\rho_{0}^{1/2}(\Theta^{(1)}-\Theta^{(2)})\|_{L^{2}}^2\Big)\\
&\qquad\qquad+\int_{0}^{T}\|v^{(1)}-v^{(2)}\|_{H^1}^2 +\|\Theta^{(1)}-\Theta^{(2)}\|_{H^1}^2\ dt\\
&\qquad \leq T \Big\{\sup\limits_{t\in[0,T]}\Big(\|\rho_{0}^{1/2}(\widetilde{v}^{(1)}-\widetilde{v}^{(2)})\|_{L^{2}}^2+\|\rho_{0}^{1/2}(\widetilde{\Theta}^{(1)}-\widetilde{\Theta}^{(2)})\|_{L^{2}}^2\Big)\\
&\qquad\qquad+\int_{0}^{T}\|\widetilde{v}^{(1)}-\widetilde{v}^{(2)}\|_{H^1}^2 +\|\widetilde{\Theta}^{(1)}-\widetilde{\Theta}^{(2)}\|_{H^1}^2\ dt \Big\}
\end{aligned}
\end{equation}
This inequality implies that $\Xi$ is a contraction mapping in $X_T$ provided $T$ is chosen small enough only depending on the $M_0$ and $M_1$.

\begin{proof}[\textbf{Proof of Theorem \ref{mainthm}}]
Since $\Xi$ is a contraction mapping in $X_T$ if $T$ is sufficient small than a constant depending on $M_0$ (we can choose $M_1=2P(M_0)$), it follows from the Banach fixed point theorem that, there exists a unique fixed point $(v,\Theta)$ in $X_T$, such that $(v,\Theta)=\Xi(v,\Theta)$. By checking the regularity of $(v,\Theta)\in X_T$, one has that $(v,\Theta)$ is the strong solution to \eqref{1}. The uniqueness can be derived by the similar arguments obtaining the contracting inequalities \eqref{contraction3}. This finishes the proof of Theorem \ref{mainthm}.
\end{proof}

\vspace{0.8cm}
\textbf{Acknowledgement} The authors would like to express gratitude to Professor Zhouping Xin and Professor Tao Luo for their constructive suggestions on this paper.
\clearpage

\vspace{0.7cm}
\bibliographystyle{amsplain}
\bibliography{library1}

\end{document}